\documentclass[preprint,11pt,numbers]{elsarticle-arxiv}

\usepackage{amssymb}
\usepackage{amsmath}
\usepackage{amsthm}
\usepackage{mathtools}
\usepackage{array}
\usepackage{tikz}
\usetikzlibrary{calc,positioning}
\usepackage{url}

\setcitestyle{square,comma}

\newtheorem{theorem}{Theorem}[section]
\newtheorem{corollary}[theorem]{Corollary}
\newtheorem{conjecture}{Conjecture}[section]
\newtheorem{definition}{Definition}[section]
\newtheorem{lemma}[theorem]{Lemma}
\newtheorem{problem}[theorem]{Problem}

\newcommand{\calF}{\mathcal{F}}
\newcommand{\codeg}{\operatorname{codeg}}
\newcommand{\cdeg}{\operatorname{cdeg}}
\numberwithin{equation}{section}

\journal{The Electronic Journal of Combinatorics}
\begin{document}
\bibliographystyle{alpha}

\begin{frontmatter}

\title{\textbf{\Large $(2,\calF)$-Avoiding Coloring and B-Coloring under Bipartite Exclusions}}

\author[1]{Zhijun Lu}
\author[1]{Qirui Ying}
\author[1]{Huimin Song\corref{cor1}}
\cortext[cor1]{Corresponding author. \textit{E-mail address:} \url{hmsong@sdu.edu.cn} (H.Song).}

\affiliation[1]{organization={School of Mathematics and Statistics},
            addressline={Shandong University},
            city={Weihai},
            postcode={264209},
            state={Shandong},
            country={China}}

\begin{abstract}
\ \ \ \ Let $\calF$ be a nonempty family of connected bipartite graphs, each with at least two edges. For a graph $G$, a proper vertex coloring of $G$ is $(2,\calF)$-avoiding if no member of
$\calF$ occurs bichromatically, and $\chi_{2,\calF}(G)$ denotes the minimum
number of colors in such a coloring. A B-coloring of $G$ is a proper
edge-coloring in which every $4$-cycle is rainbow, and $q_B(G)$ denotes the
minimum number of colors in a B-coloring of $G$. In this paper, we introduce
a structural parameter of bipartite graphs and estimate
$\chi_{2,\calF}(G)$ and $q_B(G)$ in terms of this parameter. For a fixed connected bipartite graph $F$ with at least one edge and bipartition classes $X_F$ and $Y_F$, define
$
k(F)=\min\bigl\{|I|: I\subseteq X_F\text{ or }I\subseteq Y_F,
\ F-I\text{ is a forest}\bigr\}.
$

Let $m\ge2$ be the minimum number of edges in a member of $\calF$. We prove
that if $k(F)\le m-2$, then every $F$-free graph $G$ of sufficiently large
maximum degree $\Delta$ satisfies $\chi_{2,\calF}(G)=
O((\frac{\Delta^m}{\log\Delta})^{\frac{1}{m-1}})$, which gives a positive
answer to Chuet's Problem~A and C in a sharp sense, thereby extending the results
of Chuet [arXiv:2603.23379] from frugal colorings to
$(2,\calF)$-avoiding colorings.

For B-colorings, put $k=k(F)$, $h=|V(F)|$, and
$s=\min\{|X_F|,|Y_F|\}$. We prove that every $F$-free graph $G$ of
sufficiently large maximum degree $\Delta$ satisfies
\[
q_B(G)\le
\begin{cases}
\Delta+\Delta^{1-\eta}+1, & \text{if }s\le2,\\[4pt]
(4h-2)(\Delta-1)+1, & \text{if }s\ge3\text{ and }k\le1,\\[4pt]
C\frac{\Delta^{2-\frac{1}{k}}}{\log\Delta},
& \text{if }k\ge2,
\end{cases}
\]
where $\eta>0$ and $C>0$ depend only on $F$. For $k\le1$, the linear order is best possible. For every integer $k\ge2$ and every $\varepsilon>0$, we show that
there exists a fixed connected bipartite graph $F$ with $k(F)=k$
such that, for every sufficiently large integer $\Delta$, there is an
$F$-free graph $G$ of maximum degree $\Delta$ satisfying
$q_B(G)=\Omega\!\left(
\frac{\Delta^{2-\frac{1}{k}-\varepsilon}}{\log\Delta}\right)$.

To prove these results, we develop a common reduction of the coloring
problems to $P$-perfect matching problems in auxiliary hypergraphs and apply
the forbidden-submatching theorem of Delcourt and Postle.
\end{abstract}

\begin{keyword}\sloppy
\textit{forbidden bichromatic subgraphs \sep B-coloring \sep bipartite exclusions
\sep forbidden submatchings \sep frugal coloring}
\end{keyword}

\end{frontmatter}

\section{Introduction}\label{s:intro}

\subsection{Preliminaries}\label{ss:prelim}

All graphs considered are simple, finite, and undirected. For a graph $G$,
let $V(G)$ and $E(G)$ denote its vertex set (always nonempty) and edge set,
respectively. The neighborhood of a vertex $v$ in $G$ is
$N_G(v)=\{u\in V(G):uv\in E(G)\}$, and its elements are called the
neighbors of $v$. The degree of $v$, denoted by $d_G(v)$, is
$|N_G(v)|$. The maximum degree $\Delta(G)$ and the minimum degree
$\delta(G)$ are the maximum and minimum degree among all vertices of $G$,
respectively.

A graph $H$ is a subgraph of a graph $G$, denoted by $H\subseteq G$, if
$V(H)\subseteq V(G)$ and $E(H)\subseteq E(G)$. A subgraph $H$ of $G$ is
called an induced subgraph of $G$ if
$E(H)=\{uv\in E(G):u,v\in V(H)\}$. For a set $S\subseteq V(G)$, the
subgraph of $G$ induced by $S$ is denoted by $G[S]$. The \emph{square}
$G^2$ of $G$ is the graph with vertex set $V(G)$ in which two distinct
vertices are adjacent if they are adjacent in $G$ or have a common neighbor
in $G$. We say that $G$ is \emph{$H$-free} if it contains no copy of $H$. The complete bipartite graph
with bipartition classes of sizes $s$ and $t$ is denoted by $K_{s,t}$.
For $r\ge1$, let $P_r$ denote the path on $r$ vertices, and for
$r\ge3$, let $C_r$ denote the cycle on $r$ vertices.
The girth $g(G)$ is the length of a shortest cycle in $G$, and
$g(G)=\infty$ if $G$ is acyclic. When no ambiguity arises, we abbreviate
$d_G(v)$, $N_G(v)$, and $g(G)$ as $d(v)$, $N(v)$, and $g$, respectively.

A graph is \emph{$d$-degenerate} if every nonempty subgraph has a vertex
of degree at most $d$.

For a positive integer $k$, let $[k]=\{1,2,\ldots,k\}$. A set of
cardinality exactly $k$ is called a $k$-set. All logarithms are natural.
Constants implicit in asymptotic notation may depend on the fixed parameters.

A proper $k$-coloring of $G$ is a mapping $\varphi:V(G)\to[k]$ such
that $\varphi(u)\ne\varphi(v)$ for every edge $uv\in E(G)$. The minimum such $k$ is the
chromatic number $\chi(G)$. A subgraph is \emph{bichromatic} if its vertices
receive at most two colors. Let $\calF$ be a nonempty family of connected
bipartite graphs, each with at least two edges. A proper coloring is \emph{$(2,\calF)$-avoiding} if no
member of $\calF$ occurs bichromatically, and $\chi_{2,\calF}(G)$ denotes
the minimum number of colors in such a coloring. When $\calF=\{H\}$, we
write $\chi_{2,H}(G)$. We also write
$\chi_{2,\calF}(\Delta)=\max\{\chi_{2,\calF}(G):\Delta(G)\le\Delta\}$, and $\chi_{2,\calF}(\Delta,F) = \max\{\chi_{2,\calF}(G):\Delta(G)\le\Delta, G \text{ is $F$-free}\}$ 
for each positive integer $\Delta$ and each fixed graph $F$ with at least
one edge. We use $\chi_{2,H}(\Delta)$ and
$\chi_{2,H}(\Delta,F)$ when $\calF=\{H\}$.

A proper coloring is \emph{$\beta$-frugal} if no color appears more than
$\beta$ times in the neighborhood of any vertex, and $\chi_\beta(G)$ denotes
the minimum number of colors in such a coloring. Thus $\beta$-frugal
colorings are precisely $(2,K_{1,\beta+1})$-avoiding colorings. Accordingly,
we write $\chi_\beta(\Delta,F)=\chi_{2,K_{1,\beta+1}}(\Delta,F)$.
Taking $\calF=\{P_4\}$ gives star coloring, while taking
$\calF=\{C_{2t}:t\ge2\}$ gives acyclic coloring.
For $\calF=\{P_3\}$, a $(2,\calF)$-avoiding coloring is precisely a
$1$-frugal coloring, and equivalently a proper coloring of $G^2$.
Thus $\chi_{2,P_3}(G)=\chi_1(G)=\chi(G^2)$.

We also consider edge-colorings. A proper edge-coloring of $G$ is a mapping
$\varphi:E(G)\to[k]$ such that adjacent edges receive distinct colors.
A subgraph of an
edge-colored graph is \emph{rainbow} if all its edges receive distinct
colors. A \emph{B-coloring} of $G$ is a proper edge-coloring in which every
copy of $C_4$ is rainbow. The minimum number of colors in a B-coloring of
$G$ is denoted by $q_B(G)$. For a positive integer $\Delta$ and a fixed
graph $F$ with at least one edge, put
$q_B(\Delta,F)=\max\{q_B(G):\Delta(G)\le\Delta,\ G\text{ is $F$-free}\}$.
A \emph{strong edge-coloring} is a proper
edge-coloring in which each color class is an induced matching, and the
minimum number of colors in such a coloring is the strong chromatic index
$\chi'_s(G)$.

We next introduce the hypergraph terminology used in the proofs. The
formulation follows Delcourt and Postle~\cite{DelcourtPostle2024}. A
hypergraph $\mathcal A$ consists of a vertex set $V(\mathcal A)$ and a set
$E(\mathcal A)$ of subsets of $V(\mathcal A)$. For
$x\in V(\mathcal A)$, let
$d_{\mathcal A}(x)=|\{e\in E(\mathcal A):x\in e\}|$.
For $x,y\in V(\mathcal A)$ with $x\ne y$, their pair-codegree is
$\codeg_{\mathcal A}(x,y)
=|\{e\in E(\mathcal A):\{x,y\}\subseteq e\}|$, and we put
$\codeg(\mathcal A)
=\max\limits_{x\ne y}
\codeg_{\mathcal A}(x,y)$. We call $\mathcal A$
$r$-bounded if every edge has size at most $r$.

A matching in $\mathcal A$ is a set of pairwise disjoint edges. We call
$\mathcal A$ \emph{bipartite with parts $P,Q$}
if $V(\mathcal A)=P\cup Q$, $P\cap Q=\emptyset$, and $|e\cap P|=1$ for every
$e\in E(\mathcal A)$. A matching in $\mathcal A$ is \emph{$P$-perfect}
if it covers every vertex of $P$.

\begin{definition}\label{def:configuration}
Let $\mathcal A$ be a hypergraph. A hypergraph $\mathcal C$ is a
\emph{configuration hypergraph for $\mathcal A$} if
$V(\mathcal C)=E(\mathcal A)$ and every edge of $\mathcal C$ is a matching
of $\mathcal A$ of size at least two. The edges of $\mathcal C$ are called
\emph{configurations}. A matching $M$ of $\mathcal A$ is
\emph{$\mathcal C$-avoiding} if $F\nsubseteq M$ for every
$F\in E(\mathcal C)$.
\end{definition}

For $i\ge2$, let
$E_i(\mathcal C)=\{F\in E(\mathcal C):|F|=i\}$. For
$e\in V(\mathcal C)=E(\mathcal A)$, let
$d_{\mathcal C,i}(e)=|\{F\in E_i(\mathcal C):e\in F\}|$, and define
$\Delta_i(\mathcal C)=\max\limits_{e\in V(\mathcal C)}d_{\mathcal C,i}(e)$.
For integers $2\le\ell<k$, define
\[
\Delta_{k,\ell}(\mathcal C)
=
\max\limits_{S\in\binom{V(\mathcal C)}{\ell}}
|\{F\in E_k(\mathcal C):S\subseteq F\}|.
\]

For $x\in V(\mathcal A)$ and $e\in E(\mathcal A)=V(\mathcal C)$ with
$x\notin e$, define
\[
\codeg_2(\mathcal A,\mathcal C;x,e)
=
|\{F\in E_2(\mathcal C):e\in F,\;
\exists f\in F\setminus\{e\}\text{ with }x\in f\}|,
\]
and let
$\codeg_2(\mathcal A,\mathcal C)
=\max\limits_{x\notin e}
\codeg_2(\mathcal A,\mathcal C;x,e)$.
For $e,f\in V(\mathcal C)$ with $e\ne f$, define their common $2$-degree by
$\cdeg_2(\mathcal C;e,f) = |\{w\in V(\mathcal C): \{e,w\},\{f,w\}\in E_2(\mathcal C)\}|$,
and let
$\cdeg_2(\mathcal C)
=\max\limits_{e\ne f}
\cdeg_2(\mathcal C;e,f)$.

We refer to Bondy and
Murty~\cite{BondyMurty2008} for graph-theoretic terminology not explicitly
defined in this paper.

\subsection{Background and Main Results}\label{ss:background}

In 1996, Johansson~\cite{Johansson1996} proved that every
triangle-free graph of maximum degree $\Delta$ has
chromatic number $O(\frac{\Delta}{\log\Delta})$,
while the random constructions of Bollob\'as~\cite{Bollobas1981}
show that the order $\frac{\Delta}{\log\Delta}$ remains best possible
under any fixed prescribed girth.
In a related direction, Alon, Krivelevich, and Sudakov~\cite{AlonKrivelevichSudakov1999} placed
Johansson's result in a broader local-sparsity framework and posed the following conjecture.

\begin{conjecture}[{\cite{AlonKrivelevichSudakov1999}}]\label{conj:AKS}
For every fixed graph $F$, there exist a constant $C>0$ and $\Delta_0$
such that every $F$-free graph $G$ of maximum degree
$\Delta\ge\Delta_0$ satisfies
$\chi(G)\le C\frac{\Delta}{\log\Delta}$.
\end{conjecture}

For an arbitrary fixed graph $F$, the best known general upper bound
for the chromatic number of an $F$-free graph of maximum degree
$\Delta$ is
$O(\frac{\Delta\log\log\Delta}{\log\Delta})$~\cite{Johansson1996,Molloy2019},
and the strongest known bounds for several important classes of forbidden
graphs are summarized in \textbf{Table~\ref{tab:sparse-coloring}}.

\begin{table}[htbp]
\centering
\caption{Representative bounds toward \textbf{Conjecture~\ref{conj:AKS}}. The excluded
graph $F$ is fixed and $\Delta\to\infty$.}
\label{tab:sparse-coloring}
\renewcommand{\arraystretch}{1.18}
\small
\begin{tabular}{|>{\centering\arraybackslash}p{0.26\textwidth}|
>{\centering\arraybackslash}p{0.34\textwidth}|
>{\centering\arraybackslash}p{0.25\textwidth}|}
\hline
Excluded graph $F$ & Best upper bound for $\chi(G)$ & Reference\\
\hline
$F$ a forest & $O(1)$ & \cite{BondyMurty2008}\\
\hline
$F=K_3$ & $(1+o(1))\frac{\Delta}{\log\Delta}$ & \cite{Molloy2019}\\
\hline
$F$ a cycle & $(1+o(1))\frac{\Delta}{\log\Delta}$ &
\cite{DaviesKangPirotSereni2020}\\
\hline
$F$ bipartite & $(1+o(1))\frac{\Delta}{\log\Delta}$ &
\cite{AndersonBernshteynDhawan2023}\\
\hline
$F$ $3$-colorable & $O(\frac{\Delta}{\log\Delta})$ &
\cite{DhawanJanzerMethuku2025}\\
\hline
$F$ arbitrary &
$O(\frac{\Delta\log\log\Delta}{\log\Delta})$ &
\cite{Johansson1996,Molloy2019}\\
\hline
\end{tabular}
\end{table}

Hind, Molloy, and Reed~\cite{HindMolloyReed1997} introduced frugal coloring
in order to control repetition of a color inside a neighborhood. In the same paper, they established the following upper bound and
presented a construction due to Alon showing that it is sharp up to
a constant factor. More precisely, for every fixed $\beta\ge1$, there
exist positive constants $C',C$ and $\Delta_0$ such that
every graph $G$ of maximum degree $\Delta\ge\Delta_0$ satisfies
$\chi_\beta(G)\le C\Delta^{1+\frac{1}{\beta}}$.
Moreover, for every integer $\Delta\ge\Delta_0$, there exists a
bipartite graph $G$ of maximum degree $\Delta$ satisfying
$\chi_\beta(G)\ge C'\Delta^{1+\frac{1}{\beta}}$.
Kang and M\"uller~\cite{KangMuller2011} subsequently improved the
lower bound to $(\frac{1}{\beta}-o(1))\Delta^{1+\frac{1}{\beta}}$
for every fixed $\beta\ge1$, and developed further connections
among frugal, acyclic, and star colorings. 

Since the general upper bound for the frugal coloring is sharp up to a constant factor, a
natural next step is to seek improved bounds under sparsity assumptions.
In analogy with the study of proper coloring under subgraph exclusions,
one may ask which fixed forbidden subgraphs $F$ force an asymptotic reduction
in $\chi_\beta(\Delta,F)$. The bipartite lower-bound constructions of Alon and Kang and
M\"uller~\cite{HindMolloyReed1997,KangMuller2011} show that excluding a nonbipartite graph cannot yield such a
reduction. However, excluding a bipartite graph does not always suffice:
when $\beta=1$, the point--line incidence graphs of projective planes
are $C_{4}$-free, but their squares require $\Omega(\Delta^2)$ colors~\cite{HindMolloyReed1997}.
These observations motivate the following problem posed by
Chuet~\cite{Chuet2026}.

\begin{problem}[{\cite[Problem~A]{Chuet2026}}]\label{prob:A}
Fix $\beta\ge1$. For which fixed bipartite graphs $F$ with at least one
edge does $\chi_\beta(\Delta,F)=o(\Delta^{1+\frac{1}{\beta}})$ as
$\Delta\to\infty$?
\end{problem}

Chuet~\cite{Chuet2026} answered \textbf{Problem~\ref{prob:A}} for two
fundamental classes of bipartite obstructions.

\begin{theorem}[{\cite{Chuet2026}}]\label{thm:chuet}
For every fixed $\beta,r,t\ge2$, there exist a constant
$C=C(\beta,r,t)>0$ and $\Delta_0$ such that every graph $G$ of
maximum degree $\Delta\ge\Delta_0$ that is either $K_{\beta,r}$-free
or $C_{2t}$-free satisfies
$\chi_\beta(G)\le
C\frac{\Delta^{1+\frac{1}{\beta}}}{(\log\Delta)^{\frac{1}{\beta}}}$.
The order is best possible even for graphs of arbitrarily large prescribed
girth.
\end{theorem}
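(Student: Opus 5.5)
We prove Theorem~\ref{thm:chuet} by reducing $\beta$-frugal coloring to a $P$-perfect $\mathcal C$-avoiding matching problem and applying the forbidden-submatching theorem of Delcourt and Postle~\cite{DelcourtPostle2024}. The lower bound we take from a random high-girth construction.

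\emph{The auxiliary hypergraph.} Fix $N=C\Delta^{1+1/\beta}/(\log\Delta)^{1/\beta}$ colors. Let $P=V(G)$ and $Q=\{(uv,c):uv\in E(G),\,c\in[N]\}$. For $v\in V(G)$ and $c\in[N]$, let $\mathcal A$ have the edge
\[
e_{v,c}=\{v\}\cup\{(uv,c):u\in N(v)\}.
\]
Then $\mathcal A$ is $(\Delta+1)$-bounded, every $v\in P$ has degree $N$, every vertex of $Q$ has degree at most $2$, and $\codeg(\mathcal A)\le 1$. A $P$-perfect matching is exactly a proper $N$-coloring. For frugality, let $E_{\beta+1}(\mathcal C)$ consist of the sets $\{e_{u_1,c},\dots,e_{u_{\beta+1},c}\}$ with $u_1,\dots,u_{\beta+1}$ distinct neighbors of a common vertex $w$. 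A $\mathcal C$-avoiding $P$-perfect matching is then a $\beta$-frugal coloring.

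\emph{Degree conditions.} For a fixed $e_{v,c}$ we choose $w\in N(v)$ and then $\beta$ further neighbors of $w$. This gives $\Delta_{\beta+1}(\mathcal C)\le\Delta^{\beta+1}$. The Delcourt--Postle hypothesis $\Delta_{\beta+1}(\mathcal C)\le\alpha N^{\beta}\log N$ is therefore exactly what the choice of $N$ guarantees, with $C$ large in terms of $\alpha$. This is where the $(\log\Delta)^{1/\beta}$ gain comes from.

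\emph{Codegree conditions; this is the main obstacle.} We need $\Delta_{\beta+1,\ell}(\mathcal C)\le N^{\beta+1-\ell-\varepsilon}$ for $2\le\ell\le\beta$. Fixing $\ell$ edges of a configuration forces the common color and fixes $\ell$ vertices. The count is then (number of common neighbors $w$ of these $\ell$ vertices)$\cdot\Delta^{\beta+1-\ell}$.

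For $\ell=\beta$ the hypothesis does the work. $K_{\beta,r}$-freeness gives at most $r-1$ common neighbors. In the $C_{2t}$-free case, a K\H{o}v\'ari--S\'os--Tur\'an/Bondy--Simonovits type count of $\beta$-sets with many common neighbors serves instead. Either way the requirement holds with polynomial room, since $N^{1+1/\beta-\varepsilon}\gg\Delta$.

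For $2\le\ell<\beta$ the trivial bound $\Delta$ on common neighbors loses a factor $\Delta^{(\ell-1)/\beta}$. Here I would first try a preliminary \emph{heavy-set} reduction. Call an $\ell$-set heavy if it has more than $\Delta^{1-\delta_\ell}$ common neighbors. Heavy $\ell$-sets are rare: by double counting stars, or by the extremal bound in the $C_{2t}$ case, each vertex lies in few of them. We then add smaller configurations that keep the members of a heavy $\ell$-set from being monochromatic. Concretely, we forbid the monochromatic $(\beta+1)$-sets that contain a heavy $\ell$-subset via lower-uniformity configurations, or we handle the heavy part with a separate small palette colored greedily. I expect balancing the exponents $\delta_\ell$ so that both $\Delta_{i}(\mathcal C)$ for the new configurations and the codegree bounds hold simultaneously to be the hardest step. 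An alternative is a random pre-partition of the neighborhoods, which makes the bad common-neighbor counts concentrate.

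The remaining conditions, $\codeg_2$ and $\cdeg_2$, follow from $\codeg(\mathcal A)\le1$ and the smallness of $\Delta_2$.

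\emph{Sharpness.} Take a random $\Delta$-regular bipartite graph, or Alon's construction from~\cite{HindMolloyReed1997}, and delete the few short cycles to obtain girth larger than $\max(2t,2\beta)$. A first-moment count shows that every color class meeting many vertices creates a vertex with $\beta+1$ equally colored neighbors. Hence $\chi_\beta(G)=\Omega(\Delta^{1+1/\beta}/(\log\Delta)^{1/\beta})$.
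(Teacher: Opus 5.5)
There is a genuine gap, and it sits exactly at the step you defer. In the paper this theorem is the case $\calF=\{K_{1,\beta+1}\}$, $m=\beta+1$, of \textbf{Theorem~\ref{thm:main}}, using $k(K_{\beta,r})\le\beta-1$ and $k(C_{2t})=1$, together with \textbf{Theorem~\ref{thm:lower-avoiding}} for $H=K_{1,\beta+1}$. Your overall strategy matches that proof: a reduction to \textbf{Lemma~\ref{lem:DP}} plus heavy sets. However, the justification you offer for the heavy-set step fails quantitatively.

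The codegree condition forces the threshold $\alpha_\ell=\Delta^{\frac{\beta+1-\ell}{\beta}-\varepsilon}$ with a \emph{polynomial} $\varepsilon>0$. The configurations forbidding a monochromatic heavy $\ell$-set have size $\ell$, so you need $\Delta_\ell(\mathcal C)\le\xi D^{\ell-1}\log D$ with $D=\Theta(N)$. Up to logarithms this is $\Delta^{\frac{(\ell-1)(\beta+1)}{\beta}}$. Double counting pairs $(S,z)$ with $u\in S$ and $z\in N_G(S)$ only gives $\Delta^\ell/\alpha_\ell=\Delta^{\frac{(\ell-1)(\beta+1)}{\beta}+\varepsilon}$ heavy $\ell$-sets through $u$. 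That misses by exactly $\Delta^{\varepsilon}$, and for $\ell=2$ it also leaves $\cdeg_2$ uncontrolled.

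The missing idea is to use the exclusion of $F$ on a link graph.
\begin{itemize}
\item Take heavy sets minimal. Let $U$ be an independent $(j-1)$-set containing no heavy set, so that $|N_G(U)|\le\alpha_{j-1}$.
\item Let $L$ be the bipartite graph between $A=N_G(U)$ and the set $B$ of all $w$ for which $U\cup\{w\}$ is minimal heavy.
\item Every vertex of $A$ is adjacent to all of $U$. Hence $L$ contains no copy of $F-I_t$ with the class of $I$ in $B$ and the other class in $A$. Here $I$ is a set witnessing $k(F)$, $I_t\subseteq I$, and $|I_t|=\min\{j-1,k(F)\}$. Otherwise $I_t$ could be mapped into $U$, giving a copy of $F$.
\item For $C_{2t}$ this forbids a $(2t-1)$-vertex path with both ends in $A$, so $e(L)=O(|A|+|B|)$. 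For $K_{\beta,r}$ it is a one-sided K\H{o}v\'ari--S\'os--Tur\'an count (\textbf{Lemma~\ref{lem:one-sided-forest}}).
\item Combined with $e(L)>\alpha_j|B|$, this gives $|B|=O(\alpha_{j-1}/\alpha_j)$, a full factor $\Delta$ better than the trivial bound. This is where $k(F)\le\beta-1$ enters. All degree and codegree conditions for the heavy-set configurations then follow, as in \eqref{eq:special-extension-bound}--\eqref{eq:special-fixed-set}.
\end{itemize}

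Your $\ell=\beta$ step is also wrong in the $C_{2t}$ case. For $t>\beta$ the graph $K_{\beta,\Delta}$ is $C_{2t}$-free, so $\beta$ vertices can have $\Delta$ common neighbours. Moreover $\Delta_{\beta+1,\beta}$ is a maximum, not an average, so no extremal count helps. Heavy $\beta$-sets need the same mechanism; the paper's special sets range over $2\le j\le\beta$.

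Two further repairs are needed.

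First, your $\mathcal A$ has edges of size $d_G(v)+1$, which can be as large as $\Delta+1$. \textbf{Lemma~\ref{lem:DP}} requires a fixed bound $r$ on edge sizes, since $D_\theta$ and $\xi$ depend on $r$. The paper keeps $\mathcal A_q$ a disjoint union of stars with edges $\{p_v,z_{v,c}\}$. It encodes properness by the $2$-configurations $\{e_{u,c},e_{v,c}\}$ for $uv\in E(G)$, and then bounds $\cdeg_2$ using the heavy-pair count above.

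Second, for sharpness, the fact that every large set has a vertex with $\beta+1$ neighbours in it is not robust by itself. After deleting high-degree vertices and short cycles, the witnesses may be gone. You would need many witnesses per set, or the paper's argument. The paper uses Janson's inequality on pairs of colour classes and a union bound over all partial colourings. This shows that every induced subgraph of $\mathbb G(n,d/n)$ on at least $n/2$ vertices needs more than $q$ colours. That argument also yields arbitrary prescribed girth, rather than only girth above $\max\{2t,2\beta\}$.
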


In 2011, Aravind and Subramanian~\cite{AravindSubramanian2011}
introduced $(2,\calF)$-avoiding colorings, which include acyclic,
star, and frugal colorings as special cases. In that work and their
subsequent paper~\cite{AravindSubramanian2013}, they established the
following bounds on $\chi_{2,\calF}(\Delta)$.

\begin{theorem}[{\cite{AravindSubramanian2011,AravindSubramanian2013}}]
\label{thm:AS-bounds}
Let $\calF$ be a fixed nonempty family of connected bipartite graphs,
and let $m\ge2$ be the minimum number of edges in a member of $\calF$.
There exist positive constants $C',C$ and $\Delta_0$
such that every graph $G$ of maximum degree $\Delta\ge\Delta_0$
satisfies $\chi_{2,\calF}(G)\le C\Delta^{\frac{m}{m-1}}$.
Moreover, for every integer $\Delta\ge\Delta_0$, there exists a graph
$G$ of maximum degree $\Delta$ satisfying
$\chi_{2,\calF}(G)\ge
C'(\frac{\Delta^m}{\log\Delta})^{\frac{1}{m-1}}$.
\end{theorem}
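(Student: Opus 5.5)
The statement has two parts: an upper bound $\chi_{2,\calF}(G)\le C\Delta^{m/(m-1)}$ for every graph of maximum degree $\Delta$, and a matching construction with $\chi_{2,\calF}(G)\ge C'(\Delta^m/\log\Delta)^{1/(m-1)}$. I would prove the two directions separately, the first with the Lov\'asz Local Lemma on a random coloring and the second with a random graph and a union bound.

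\textbf{Upper bound.} Fix $F_0\in\calF$ with exactly $m$ edges. Any bichromatic copy of a member of $\calF$ contains, as a subgraph, a bichromatic connected bipartite graph with $m$ edges. Indeed, every member has at least $m$ edges and is connected, so we may pass to a connected subgraph with exactly $m$ edges, and bichromaticity is inherited. So it suffices to produce a proper coloring in which no connected subgraph with $m$ edges (hence at most $m+1$ vertices) is bichromatic.

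Color $V(G)$ uniformly at random from $N=C\Delta^{m/(m-1)}$ colors. We use two types of bad events.
\begin{itemize}
\item Type A: the two ends of an edge receive the same color. Its probability is $1/N$.
\item Type B: a connected subgraph $T$ with $m$ edges and $v\le m+1$ vertices is properly $2$-colored. Its probability is at most $2N^2\cdot N^{-v}$, since a connected bipartite graph has essentially one proper $2$-colouring pattern up to the choice of two colours. For a tree ($v=m+1$) this is $O(N^{1-m})$; when $T$ has cycles ($v\le m$) it is even smaller.
\end{itemize}
Each event depends on at most $O(\Delta^m)$ Type-B events, because connected subgraphs with $m$ edges through a given vertex number $O(\Delta^m)$. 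It depends on at most $O(\Delta)$ Type-A events.

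In the symmetric Local Lemma we need $p\cdot d=O(\Delta^m N^{1-m})$ to be small. This holds once $N^{m-1}\ge C\Delta^m$, which is exactly our choice of $N$. Since the probabilities are not uniform, I would use the asymmetric version with weights $x_A=2/N$ and $x_B=2N^{1-m}$. The main care is in the dependency counts, which require uniform bounds on the number of connected $m$-edge subgraphs through a vertex. This is routine.

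\textbf{Lower bound.} I would take $G=G(n,p)$ with $p=\Delta/(2n)$, choosing $n$ so that the maximum degree stays below $\Delta$ after deleting a few vertices. The aim is to show that every coloring with $k=c(\Delta^m/\log\Delta)^{1/(m-1)}$ colors has a bichromatic copy of some $F\in\calF$.
\begin{enumerate}
\item Any coloring with $k$ colors has some pair of colour classes whose union $U$ has size at least $2n/k$. Take $F$ to be a member with $m$ edges.
\item By the pigeonhole step, it is enough that with high probability every set $U$ of size $u=2n/k$ spans a copy of $F$ whose bipartition is compatible with some splitting of $U$. A cleaner route is to require that every pair of disjoint sets $A,B$ of size $u/2$ contains a copy of $F$ between them, with the classes landing in $A$ and $B$.
\item The expected number of copies of $F$ between $A$ and $B$ is about $u^{|V(F)|}p^m$. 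Janson's inequality bounds the probability of having no copy by $\exp(-\Omega(u^{v}p^m))$, assuming $F$ is strictly balanced or handling the densest-subgraph variant.
\item Union over the $\binom{n}{u}^2\le e^{2u\log(en/u)}$ choices of $(A,B)$. This needs $u^{v-1}p^m\gg\log(n/u)\approx\log\Delta$.
\end{enumerate}

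Optimizing $n$ against $\Delta$ (taking $u$ comparable to $1/p$ scaled appropriately) yields the stated exponent $\frac{m}{m-1}$ with the $\log$ loss. This matches Alon's frugal construction for stars.

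\textbf{Main obstacle.} I expect the Janson step to be the hardest part: for a general $F$ one must pass to its maximally dense subgraph. The fallback is to choose $F$ as a tree-minimal witness, or to use the Alon-style $P_3$/star argument generalized through the $m$-edge subgraph counts.
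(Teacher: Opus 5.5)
The paper only quotes this theorem from Aravind--Subramanian and does not prove it. Both halves of your plan have genuine gaps; the closely related arguments in Sections~\ref{ss:upper-avoiding} and~\ref{ss:lower-avoiding} show what is missing.

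\textbf{Upper bound.} Your claim that cyclic $T$ have ``even smaller'' probability is backwards. A connected bipartite $T$ on $v$ vertices is properly $2$-colored with probability about $N^{2-v}$, which grows as $v$ decreases, so for $v\le m$ it is at least $N^{2-m}\gg N^{1-m}$. Such $T$ can occur $\Theta(\Delta^{v-1})$ times through a vertex (e.g.\ in $K_{\Delta,\Delta}$). With $N=C\Delta^{m/(m-1)}$ this gives $\Delta^{v-1}N^{2-v}=\Theta(C^{2-v}\Delta^{(m+1-v)/(m-1)})\to\infty$; for $m=4$, $T=C_4$, $G=K_{\Delta,\Delta}$ it is of order $\Delta^{1/3}$. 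Any admissible local-lemma weights satisfy $x_{B'}\ge\mathbb P(B')$, so $\prod_{B'}(1-x_{B'})\le e^{-\Omega(\Delta^{1/3})}$, and the condition fails for every choice of weights. Your reduction cannot sidestep this: for $\calF=\{C_4\}$ you must exclude bichromatic $C_4$'s. Your argument is correct only for $m\le3$, where every connected bipartite graph with $m$ edges is a tree.

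The missing idea is the Alon--McDiarmid--Reed device used in the proof of Theorem~\ref{thm:main}:
\begin{itemize}
\item Call an independent $j$-set ($2\le j\le m-1$) special if it has more than $\alpha_j=\Delta^{(m-j)/(m-1)}$ common neighbors and contains no smaller special set.
\item Add bad events that a special set is monochromatic. A vertex lies in fewer than $\Delta^j/\alpha_j$ special $j$-sets (double count pairs $(S,z)$ with $z\in N_G(S)$), and this exactly balances $N^{1-j}$.
\item Bichromatic copies whose color classes contain a special set are then already excluded. Keep copy-events for non-star $J$ only when neither class contains a special set. For these the count through a vertex is at most $\Delta^{m(v-2)/(m-1)}$, by the argument behind \eqref{eq:embedding-avoiding} with $\varepsilon=0$, which exactly balances $N^{2-v}$.
\end{itemize}

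\textbf{Lower bound.} Steps 2--4 cannot give more than about $\Delta$ colors, whatever $n$ is. With $k\gg\Delta$ colors and $p\approx\Delta/n$, sets of size $u\approx n/k$ have $up\approx\Delta/k\to0$. So for a tree $F$ the expected number of copies between $A$ and $B$ is $O(u(up)^m)=o(u)$, far below the $u\log(n/u)$ your union bound needs. Your condition $u^{v-1}p^m\gg\log\Delta$ reads $(\Delta/k)^m\gg\log\Delta$, i.e.\ $k\ll\Delta$, so optimizing $n$ cannot help. Worse, the claim itself is false: a graph of maximum degree about $\Delta$ has an independent set of size $n/(\Delta+1)\gg u$, so some pairs $(A,B)$ span no edges at all. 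Also, two color classes with union of size $2n/k$ may be very unbalanced.

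What is missing is to use all pairs of color classes simultaneously, as in the proof of Theorem~\ref{thm:lower-avoiding}. Cut the color classes of a set $U$ with $|U|\ge n/2$ into $q$ chunks of size $a\approx n/(8q)$. The bipartite graphs between distinct chunks use disjoint edge sets, and each must be $F$-free (edgeless if the two colors agree). The probability of this is at most $\exp(-c\binom q2 a^{m+1}p^m)\le\exp(-c'n\Delta^m/q^{m-1})$, which beats the $(q+1)^n$ choices of $(U,\sigma)$ exactly when $q^{m-1}\lesssim\Delta^m/\log\Delta$.

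Finally, if $\calF$ contains no $m$-edge tree (e.g.\ $\calF=\{C_4\}$), your fallback to a tree witness is unavailable. For a member with $v\le m$ vertices the same computation gives $q^{m-1}\lesssim a^{v-m-1}\Delta^m/\log q$, so one must take chunks of bounded size, i.e.\ $n=\Theta(q)$, rather than a sparse graph on arbitrarily many vertices.
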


Chuet also asked whether the upper bound in
\textbf{Theorem~\ref{thm:AS-bounds}} is always of the correct order.

\begin{problem}[{\cite[Problem~B]{Chuet2026}}]\label{prob:B}
Let $H$ be a fixed connected bipartite graph with $m\ge2$ edges. Is
$\chi_{2,H}(\Delta)=\Theta(\Delta^{\frac{m}{m-1}})$ as
$\Delta\to\infty$?
\end{problem}

He also asked whether a fixed girth condition forces an asymptotic reduction for
tree obstructions.

\begin{problem}[{\cite[Problem~C]{Chuet2026}}]\label{prob:C}
Let $H$ be a fixed tree with $m\ge2$ edges. Does there exist an integer
$g\ge3$ such that
$\max\limits_{\Delta(G)\le\Delta,\,g(G)\ge g}\chi_{2,H}(G)
=o(\Delta^{\frac{m}{m-1}})$ as $\Delta\to\infty$?
\end{problem}

In this paper, we address
\textbf{Problems~\ref{prob:A} and~\ref{prob:C}}. To describe the bipartite exclusions
covered by our results, we introduce the following structural parameter.
Let $F$ be a fixed connected bipartite graph with at least one edge
and bipartition classes $X_F$ and $Y_F$. Define
\[
k(F)
=
\min\bigl\{|I|: I\subseteq X_F\text{ or }I\subseteq Y_F,
\ F-I\text{ is a forest}\bigr\}.
\]

Our first main theorem gives a logarithmic improvement in the parameter
range $k(F)\le m-2$.

\begin{theorem}\label{thm:main}
Let $\calF$ be a nonempty family of connected bipartite graphs, and let
$m\ge2$ be the minimum number of edges in a member of $\calF$. Let $F$ be a
fixed connected bipartite graph with at least one edge and suppose that
$k(F)\le m-2$. There exist a constant $C=C(m,F)>0$ and $\Delta_0$ such
that every $F$-free graph $G$ of maximum degree $\Delta\ge\Delta_0$
satisfies
$\chi_{2,\calF}(G)\le
C(\frac{\Delta^m}{\log\Delta})^{\frac{1}{m-1}}$.
\end{theorem}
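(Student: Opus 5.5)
We reduce to the case $\calF=\{H\}$ with a single member $H$ of exactly $m$ edges. A coloring avoiding every bichromatic copy of a subgraph $H'\subseteq H$ with $m$ edges also avoids all members of $\calF$. So fix $H\in\calF$ with $e(H)=m$. Any bichromatic copy of a member of $\calF$ contains a bichromatic connected bipartite subgraph with exactly $m$ edges, namely a connected subgraph obtained by deleting edges. So it suffices to forbid bichromatic copies of every connected bipartite graph on $m$ edges, a finite family $\mathcal H_m$.

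Put $K=C(\Delta^m/\log\Delta)^{1/(m-1)}$ colors. We follow the paper's announced reduction and encode the coloring as a $P$-perfect matching in an auxiliary hypergraph $\mathcal A$, with $P=V(G)$ and $Q=V(G)\times[K]$ (or edge-colour slots). The hyperedge for the choice ``$v$ gets colour $c$'' is $\{v\}\cup\{(u,c):u\in N(v)\}$, which enforces properness through disjointness. Every $\mathcal A$-vertex of $P$ has degree $K$, and $Q$-vertices have degree at most $\Delta+1$.

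To make the degrees regular we pass to a suitable formulation. We assign each vertex a list of colours and randomly sparsify, giving $d\approx K$ on $P$ and at most $K$ on $Q$. Codegrees are $O(1)$.

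The configuration hypergraph $\mathcal C$ consists of the matchings that create a bichromatic copy of some $H'\in\mathcal H_m$. Such a copy on $h\le m+1$ vertices with colour classes $A,B$ is determined by $h$ hyperedges, so $E_h(\mathcal C)$ contains these configurations. We must verify the Delcourt--Postle conditions:
\begin{equation*}
\Delta_i(\mathcal C)\le \alpha\, d^{\,i-1}\log d,\qquad \Delta_{i,\ell}(\mathcal C)\le d^{\,i-\ell-\beta}.
\end{equation*}
For a fixed hyperedge $(v,c)$, count the copies of $H'$ containing $v$: there are $O(\Delta^{m})$ of them, since $H'$ is a connected graph with $m$ edges, tree-like or not, and at most $m$ new vertices get chosen along a spanning tree. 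We also have one free second colour ($K$ choices), and all vertices of the same class must take the same colour. This gives $\Delta_h\lesssim \Delta^{h-1}K$ in terms of copies with $h-1\le m$ other vertices. The crucial count is the case $h=m+1$ (trees), where we need
\begin{equation*}
\Delta^{m}K\le \alpha K^{m}\log K,
\end{equation*}
which is exactly the choice $K^{m-1}\gtrsim\Delta^m/\log\Delta$. Non-tree $H'$ have $h\le m$ and are smaller by a factor $\Delta/K$. Configurations of size $2$ do not arise because $m\ge2$ edges span at least three vertices.

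The main obstacle is the codegree condition $\Delta_{i,\ell}(\mathcal C)\le d^{i-\ell-\beta}$ and the $\codeg_2$-type conditions. For a pair of fixed hyperedges $(u,c),(w,c')$ at distance $t$ in $G$, the number of copies through both can be as large as the number of ways to complete. When $u,w$ are joined by many short paths, e.g.\ lie in a dense bipartite piece, fixing two vertices saves less than a factor $\Delta$. This is exactly where $F$-freeness with $k(F)\le m-2$ enters. We will prove a counting lemma. In an $F$-free graph, for any set $S$ of $\ell\ge2$ vertices, the number of embeddings of a connected bipartite $H'$ with $m$ edges extending $S$ is $O(\Delta^{v(H')-\ell-\gamma})$ for some $\gamma=\gamma(F)>0$, except in configurations that can be treated via colour restrictions.

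The idea behind the lemma runs as follows. If many completions existed, then by a K\H{o}v\'ari--S\'os--Tur\'an/dependent-random-choice style argument we could find $k(F)$ vertices on one side with $\Omega(\Delta^{\text{power}})$ common tree-like extensions, and embed $F$. We would take a forest $F-I$ and greedily embed it while keeping every vertex of $I$ adjacent to the needed vertices, with $|I|\le m-2$ guaranteeing room in the count. If the lemma holds with a polynomial saving $\Delta^{-\gamma}$, all $\Delta_{i,\ell}$ bounds follow with $\beta=\gamma/2$. We then apply the Delcourt--Postle forbidden-submatching theorem to get a $\mathcal C$-avoiding $P$-perfect matching, i.e.\ the desired colouring. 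I expect this counting lemma (the precise use of $k(F)\le m-2$ and handling of pairs in the same colour class) to be the hardest step.
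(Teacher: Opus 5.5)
There is a genuine gap, and it lies exactly at the step you flag as hardest. With your configurations (all vertices of every bichromatic copy), the Delcourt--Postle codegree conditions fail, and the counting lemma you intend to prove is false.

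Take $\calF=\{K_{1,3}\}$, so $m=3$, and $F=C_6$, so $k(F)=1=m-2$. Attach $\Delta-2$ pendant leaves to each vertex of the large side of $K_{2,\Delta}$. The result is $C_6$-free with maximum degree $\Delta$, and its two hubs $u_1,u_2$ are two of the leaves of $\Theta(\Delta^2)=\Theta(\Delta^{v(H')-\ell})$ copies of $H'=K_{1,3}$. With the free centre colour this gives $\Delta_{4,2}(\mathcal C)\gtrsim\Delta^2K=\Delta^{7/2+o(1)}$, whereas you need $D^{2-\beta}\le\Delta^{3}$. Dropping the centre does not help: the $3$-sets of leaves through $\{u_1,u_2\}$ number $\Theta(\Delta^2)\gg D^{1-\beta}$. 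So the only remedy is to forbid $u_1,u_2$ from sharing a colour at all.

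A small saving $\Delta^{-\gamma}$ would not suffice in general either. When the $\ell$ fixed assignment edges lie in one class, the roughly $D$ choices of the other colour force the number of extensions to be at most about $D^{a-1-\beta}\approx\Delta^{\frac{m(a-1)}{m-1}}$, where $a=i-\ell$. That is a saving of order $\Delta^{-\frac{m-a}{m-1}}$, not an arbitrary $\gamma>0$. For one-sided sets $U$ it essentially demands $|N_G(U)|\lesssim\Delta^{\frac{m-|U|}{m-1}}$, which $F$-freeness does not provide. For the star with all $m$ leaves fixed and the centre included, you even get $\Delta_{m+1,m}(\mathcal C)\ge K-1>D^{1-\beta}$ as soon as some vertex has $m$ neighbours.

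The missing idea is to change the configurations rather than the counting. The paper proceeds as follows.
\begin{itemize}
\item A minimal independent $j$-set $S$ with $2\le j\le m-1$ is called \emph{special} if $|N_G(S)|>\alpha_j=\Delta^{\frac{m-j}{m-1}-\varepsilon}$.
\item A $j$-configuration is added forbidding each special set from being monochromatic.
\item A bichromatic $K_{1,m}$ is encoded by its $m$ leaves only.
\item Only bichromatic copies whose classes are independent and contain no special set are kept, so every vertex with $b\ge2$ embedded neighbours has at most $\alpha_b$ choices.
\end{itemize}
$F$-freeness is then used not to bound embeddings but to show that special sets are rare, $|\Sigma_j(U)|\le C\alpha_{j-1}/\alpha_j$. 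This comes from Lemma~\ref{lem:one-sided-forest} applied to the edges between $N_G(U)$ and $\Sigma_j(U)$. The hypothesis $k(F)\le m-2$ enters exactly in comparing $\alpha_j$ with $\alpha_{j-1}^{r/(r+1)}$ there.

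Several other steps also fail as written.
\begin{itemize}
\item Your degree comparison for non-tree $H'$ is reversed. For $h\le m$ vertices, the naive bound $\Delta^{h-1}K$ exceeds $K^{h-1}\log K$ by a factor of roughly $\Delta^{\frac{m+1-h}{m-1}}$. So cyclic configurations need the same codegree control (the $\alpha_b$ bound above) already for $\Delta_h(\mathcal C)$.
\item Your hyperedge $\{v\}\cup\{(u,c):u\in N(v)\}$ has size $d(v)+1$, so $\mathcal A$ is not $r$-bounded for a fixed $r$ and Lemma~\ref{lem:DP} does not apply. The edge-slot variant has the same defect.
\item Moreover, the hyperedges for $(v,c)$ and $(w,c)$ with $vw\in E(G)$ meet only if $v,w$ have a common neighbour, so properness is not even encoded. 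The paper instead uses the $2$-bounded star hypergraph with edges $\{p_v,z_{v,c}\}$. It imposes properness and special pairs through $2$-configurations, which then requires checking $\codeg_2$ and $\cdeg_2$; no sparsification is needed.
\item For $m=2$, your $P_3$ configurations containing the middle vertex violate the $\Delta_{3,2}$ condition whenever two vertices have a common neighbour. The paper handles $m=2$ separately: $F$ is then a tree, $G$ is degenerate, and Lemma~\ref{lem:square-degenerate} gives $O(\Delta)$ colours.
\end{itemize}
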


Since $k(K_{m-1,r})=\min\{m-1,r\}-1\le m-2$ and
$k(C_{2t})=1\le m-2$ whenever $m\ge3$ and $r,t\ge2$,
we obtain the following corollary.

\begin{corollary}\label{cor:cycle}
Let $\calF$ be a nonempty family of connected bipartite graphs, and let
$m\ge3$ be the minimum number of edges in a member of $\calF$.
For every fixed $r,t\ge2$, there exist a constant $C=C(m,r,t)>0$
and $\Delta_0$ such that every graph $G$ of maximum degree
$\Delta\ge\Delta_0$ that is either $K_{m-1,r}$-free or $C_{2t}$-free
satisfies
$\chi_{2,\calF}(G)\le
C(\frac{\Delta^m}{\log\Delta})^{\frac{1}{m-1}}$.
\end{corollary}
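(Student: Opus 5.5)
The corollary follows from Theorem~\ref{thm:main} once we check the hypothesis $k(F)\le m-2$ for $F=K_{m-1,r}$ and for $F=C_{2t}$. Then we apply the theorem with $C(m,r,t)=\max\{C(m,K_{m-1,r}),C(m,C_{2t})\}$ and with the larger of the two thresholds $\Delta_0$. Since $m\ge3$ and $r,t\ge2$, both graphs are connected bipartite graphs with at least one edge, so the theorem applies to them.

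For $F=C_{2t}$, the two bipartition classes both have size $t\ge2$. Deleting a single vertex from either class turns the cycle into a path $P_{2t-1}$, which is a forest. Deleting nothing leaves the cycle itself. Hence $k(C_{2t})=1\le m-2$, because $m\ge3$.

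For $F=K_{m-1,r}$, take $X_F$ of size $m-1$ and $Y_F$ of size $r$.
\begin{itemize}
\item Removing $I\subseteq X_F$ leaves $K_{m-1-|I|,r}$. This is a forest exactly when it contains no $C_4$, that is, when $m-1-|I|\le1$ (recall $r\ge2$). The cheapest such choice is $|I|=m-2$.
\item Symmetrically, removing $I\subseteq Y_F$ leaves a forest exactly when $r-|I|\le1$, since $m-1\ge2$. The cheapest such choice is $|I|=r-1$.
\end{itemize}
So $k(K_{m-1,r})=\min\{m-1,r\}-1\le m-2$.

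Both hypotheses therefore hold, and Theorem~\ref{thm:main} gives the stated bound for every $K_{m-1,r}$-free or $C_{2t}$-free graph. There is no real obstacle; the only care needed is to compute $k$ with $r\ge2$ and $m-1\ge2$ so that the complete bipartite graphs really contain a $C_4$ until one side is reduced to a single vertex.
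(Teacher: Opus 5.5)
Your proposal is correct and matches the paper's argument: the paper derives the corollary from Theorem~\ref{thm:main} by noting $k(K_{m-1,r})=\min\{m-1,r\}-1\le m-2$ and $k(C_{2t})=1\le m-2$ for $m\ge3$ and $r,t\ge2$. Your proposal checks exactly these two values and takes the larger of the constants and thresholds.
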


Alon and Mohar~\cite{AlonMohar2002} proved that every graph $G$ of
maximum degree $\Delta$ and girth at least seven satisfies
$\chi(G^2)=O(\frac{\Delta^2}{\log\Delta})$.
Combining this result with \textbf{Corollary~\ref{cor:cycle}} gives
the following uniform girth thresholds.

\begin{corollary}\label{cor:girth}
Let $\calF$ be a nonempty family of connected bipartite graphs, and let
$m\ge2$ be the minimum number of edges in a member of $\calF$. There exist
a constant $C=C(m)>0$ and $\Delta_0$ such that every graph $G$ of
maximum degree $\Delta\ge\Delta_0$ and girth at least seven when
$m=2$, or at least five when $m\ge3$, satisfies
$\chi_{2,\calF}(G)\le
C(\frac{\Delta^m}{\log\Delta})^{\frac{1}{m-1}}$.
\end{corollary}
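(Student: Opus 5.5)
The plan is to split on $m$. The case $m=2$ follows from Alon--Mohar, and the case $m\ge3$ follows from Corollary~\ref{cor:cycle} applied with a suitable short cycle.

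\textbf{Case $m=2$.} Some member $H\in\calF$ has exactly two edges. Since $H$ is connected, $H\cong P_3$. A $(2,\{P_3\})$-avoiding coloring is the same as a proper coloring of $G^2$, and adding further members to $\calF$ imposes extra constraints. So I need to bound $\chi_{2,\calF}(G)$ by $\chi(G^2)$. The point is the reverse containment: a proper coloring of $G^2$ is automatically $(2,\calF)$-avoiding for every $\calF$. Indeed, every member of $\calF$ is connected with at least two edges, so it contains a path $P_3$. In a proper coloring of $G^2$ the three vertices of any $P_3$ receive three distinct colors, so no member of $\calF$ can be bichromatic. Hence $\chi_{2,\calF}(G)\le\chi(G^2)$. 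When $g(G)\ge7$, Alon--Mohar gives $\chi(G^2)=O(\Delta^2/\log\Delta)$, which equals $O\bigl((\Delta^m/\log\Delta)^{1/(m-1)}\bigr)$ at $m=2$. The constant is absolute, so it depends only on $m$.

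\textbf{Case $m\ge3$.} A graph of girth at least five contains no $C_4$, so it is $C_4$-free. I apply Corollary~\ref{cor:cycle} with $t=2$ (the value of $r$ is irrelevant). This gives a constant $C(m,\cdot,2)$ and $\Delta_0$ such that every $C_4$-free graph with $\Delta\ge\Delta_0$ satisfies the bound. The hypothesis of that corollary is $k(C_4)=1\le m-2$, which holds exactly because $m\ge3$. Note that the corollary itself rests on Theorem~\ref{thm:main}.

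\textbf{Combining.} Take $C(m)$ to be the constant from whichever case applies, and $\Delta_0$ the corresponding threshold. The only care needed is in the $m=2$ case: verifying that $\chi(G^2)$ upper bounds $\chi_{2,\calF}$ for an arbitrary family, not just for $\{P_3\}$. The $P_3$-containment argument above settles this. I expect no real obstacle; the result is essentially a direct combination of the two cited bounds.
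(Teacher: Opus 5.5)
Your proposal is correct and follows the paper's route. For $m=2$ you use Alon--Mohar, together with the observation that a proper coloring of $G^2$ is $(2,\calF)$-avoiding because every member of $\calF$ contains a $P_3$; for $m\ge3$ you apply Corollary~\ref{cor:cycle} with $t=2$, since girth at least five excludes $C_4$ and $k(C_4)=1\le m-2$.
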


In particular, \textbf{Corollary~\ref{cor:girth}} answers
\textbf{Problem~\ref{prob:C}} affirmatively. The girth thresholds are
best possible. Indeed, taking $\calF=\{K_{1,\beta+1}\}$ gives
$\chi_{2,\calF}(G)=\chi_\beta(G)$. Alon's constructions~\cite{HindMolloyReed1997} require
$\Omega(\Delta^{1+\frac{1}{\beta}})$ colors in every $\beta$-frugal coloring
and have girth six when $\beta=1$ and four when $\beta\ge2$.

The lower bound in \textbf{Theorem~\ref{thm:AS-bounds}} matches
the order of the upper bound in \textbf{Corollary~\ref{cor:girth}},
but does not impose a girth condition.
The following result shows that the upper bound in
\textbf{Corollary~\ref{cor:girth}} remains best possible in order for every
fixed tree obstruction under any fixed prescribed girth. Note that Chuet~\cite{Chuet2026} already observed that his random-graph argument
can be adapted to obtain the following lower bound. We include a complete proof in \textbf{Section~\ref{s:lower}} for completeness.

\begin{theorem}\label{thm:lower-avoiding}
Let $H$ be a fixed tree with $m\ge2$ edges, and let $g\ge3$ be fixed.
There exist a constant $C=C(H,g)>0$ and $\Delta_0$ such that, for
every integer $\Delta\ge\Delta_0$, there exists a graph $G$ of maximum
degree $\Delta$ and girth at least $g$ satisfying
$\chi_{2,H}(G)\ge
C(\frac{\Delta^m}{\log\Delta})^{\frac{1}{m-1}}$.
\end{theorem}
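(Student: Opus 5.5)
Let $n$ be large, $p=d/n$ with $d$ a suitable multiple of $\Delta$, and $G_0\sim G(n,p)$. In $G_0$ we will delete every vertex lying on a cycle of length less than $g$ and every vertex whose degree exceeds $\Delta$. The choice of $n$ relative to $\Delta$ is the main parameter of the proof. We want $n$ to be about the target number of colors times a large power, so that short cycles are rare. Concretely, we take $n=\Delta^{K}$ for a large constant $K$, or another quantity polynomial in $\Delta$, and set $p=c\Delta/n$ with $c<1$. Then with high probability the maximum degree is below $\Delta$ after a few deletions, and the expected number of cycles of length less than $g$ is $O((c\Delta)^{g})$. This is $o(n)$ once $K>g$, so a Markov bound shows we delete only $o(n)$ vertices. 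The resulting graph $G$ has girth at least $g$ and maximum degree at most $\Delta$. Padding with a disjoint star then makes the maximum degree exactly $\Delta$.

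The lower bound will come from showing that every $(2,H)$-avoiding coloring of $G$ with $k$ colors, where
\[
k=\varepsilon\Bigl(\frac{\Delta^m}{\log\Delta}\Bigr)^{\frac{1}{m-1}},
\]
has a pair of color classes of total size at least $2n/k$ (up to the deleted vertices), by averaging. The union of these two classes induces a bipartite graph containing no copy of the tree $H$. Any graph containing no copy of the tree $H$ has average degree less than $2m$. So it suffices to prove the following claim: with high probability, every pair of disjoint sets $A,B$ with $|A|=|B|=a:=n/k$ spans at least $m|A|$ edges between $A$ and $B$. Then $G[A\cup B]$ contains $H$ bichromatically, a contradiction. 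It is cleaner to state the claim for every $H$-free bipartite subgraph, as follows. The number of edges between $A$ and $B$ is binomial with mean $pa^2=c\Delta a^2/n$. We need this mean to be much larger than $ma$, which holds when $a\gg n/\Delta$, that is, $k\ll\Delta$.

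This first-moment calculation fails because $k$ is larger than $\Delta$. So the real argument must count bichromatic copies of $H$ directly, not dense pairs. The refined plan is a union bound over all colorings that are \emph{balanced}: classes of size at most about $2n/k$, after recoloring large classes, which costs only a constant factor in colors. For such a coloring, we estimate the probability that no bichromatic copy of $H$ appears.

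Fix a pair of classes and a partition of the vertices of $H$ into the two classes. The number of potential copies is about $a^{m+1}$, each present with probability $p^m$, so the expected count is about $a^{m+1}p^m$. These copies overlap, and Janson's inequality handles the dependence. The $\bar\Delta$ term is dominated by pairs of copies sharing a single edge; after a possible exposure argument restricting to copies spanning distinct edges, it stays controlled. This gives a probability of at most $\exp(-\Omega(a^{m+1}p^m))$ per pair, and summing over pairs gives $\exp(-\Omega(k^2a^{m+1}p^m))$. The number of colorings is at most $k^n=\exp(n\log k)$. We therefore need
\[
k^2a^{m+1}p^m=\frac{n^{m+1}}{k^{m-1}}\cdot\frac{\Delta^m}{n^m}=\frac{n\Delta^m}{k^{m-1}}\gg n\log k .
\]
This holds exactly when
\[
k^{m-1}\ll\frac{\Delta^m}{\log\Delta},
\]
which is the claimed order. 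The main obstacle is the Janson step: when $\Delta^m/k^{m-1}$ is only logarithmic, the variance term from overlapping copies within sparse pairs must still be dominated by the mean. One also has to check that the deletion of short cycles and high-degree vertices, which touches only $o(n)$ vertices, does not destroy too many copies. The plan handles this by requiring the copies to avoid a prescribed set of $o(n)$ vertices, taken as a union bound over sets of forbidden vertices, or alternatively by using the Kim–Vu/upper-tail deletion method in place of Janson.
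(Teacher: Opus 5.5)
Your plan is essentially the paper's proof: $G(n,p)$ with $p=\Theta(\Delta/n)$, a union bound over all colorings of vertex subsets of size at least $n/2$ (which absorbs the later deletion of high-degree vertices and short cycles), Janson's inequality on the bipartite graphs between pairs of color blocks of size $\Theta(n/k)$ with independence across pairs, and padding by a disjoint star. The Janson step you flag as the main obstacle is routine: overlaps in $\ell$ edges contribute $O\bigl(\mu(ap)^{m-\ell}\bigr)$, so the dominant overlap is in $m-1$ edges rather than one, and $\bar\Delta\le\mu$ holds simply because $ap=O(\Delta/k)\to0$; the paper also replaces your balancing step (which, as stated, gives only an upper bound on class sizes) by cutting each color class into blocks of one fixed size $\Theta(n/k)$, using properness to see that two blocks of the same color span no edges.
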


Taking $\calF=\{K_{1,\beta+1}\}$ gives $m=\beta+1$, so
\textbf{Corollary~\ref{cor:cycle}} and
\textbf{Theorem~\ref{thm:lower-avoiding}} generalize
\textbf{Theorem~\ref{thm:chuet}}. More generally,
\textbf{Theorem~\ref{thm:main}} gives a positive answer to
\textbf{Problem~\ref{prob:A}} whenever $k(F)\le\beta-1$.
The complementary range $k(F)\ge m-1$, which is closely related
to \textbf{Problem~\ref{prob:B}}, is discussed in
\textbf{Section~\ref{s:further}}.

We next turn to B-colorings. B-coloring was introduced by Gy\'arf\'as and
S\'ark\"ozy~\cite{GyarfasSarkozy2023} in connection with the
Brown--Erd\H{o}s--S\'os $(7,4)$-conjecture~\cite{BrownErdosSos1973}. They
asked whether every balanced bipartite graph $G=(X,Y)$ with
$|X|=|Y|=n$ and $q_B(G)=n$ must satisfy $|E(G)|=o(n^2)$; a positive answer
would imply the $(7,4)$-conjecture.

Gy\'arf\'as and S\'ark\"ozy~\cite{GyarfasSarkozy2023} observed that
every graph $G$ satisfies $q_B(G)\le\Delta(G)^2$, and that this
bound is sharp since $q_B(K_{\Delta,\Delta})=\Delta^2$.
It is therefore natural to ask which fixed subgraph exclusions
yield a bound of order $o(\Delta^2)$. Since for every fixed nonbipartite
graph $F$, $K_{\Delta,\Delta}$ is $F$-free and
$q_B(K_{\Delta,\Delta})=\Delta^2$, no nonbipartite exclusion can yield
such a bound. We therefore focus on bipartite exclusions.

A strong edge-coloring is in particular a B-coloring, so
$q_B(G)\le\chi'_s(G)$. Bi, Bradshaw, Dhawan, and
Xu~\cite{BiBradshawDhawanXu2026} recently proved that, for every fixed
$t\ge2$, every $K_{t,t}$-free graph of maximum degree $\Delta$
satisfies $\chi'_s(G)\le(1+o(1))\frac{\Delta^2}{\log\Delta}$.
Since every fixed bipartite $F$ is contained in $K_{t,t}$ for some fixed
$t$, every $F$-free graph $G$ of maximum degree $\Delta$ satisfies
$q_B(G)\le(1+o(1))\frac{\Delta^2}{\log\Delta}$
as $\Delta\to\infty$.
Our next theorem improves the bound $O(\frac{\Delta^2}{\log\Delta})$
in terms of $k(F)$.

\begin{theorem}\label{thm:B-k}
Let $F$ be a fixed connected bipartite graph with at least one edge, and put
$k=k(F)$, $h=|V(F)|$, and $s=\min\{|X_F|,|Y_F|\}$. There exist constants
$\eta\in(0,1)$, $C>0$, and $\Delta_0$ such that every $F$-free graph
$G$ of maximum degree $\Delta\ge\Delta_0$ satisfies
\[
q_B(G)\le
\begin{cases}
\Delta+\Delta^{1-\eta}+1, & \text{if }s\le2,\\[4pt]
(4h-2)(\Delta-1)+1, & \text{if }s\ge3\text{ and }k\le1,\\[4pt]
C\frac{\Delta^{2-\frac{1}{k}}}{\log\Delta}, & \text{if }k\ge2.
\end{cases}
\]
\end{theorem}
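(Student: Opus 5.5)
The plan is to treat the three regimes separately. Each uses the same encoding of edge-colourings. We form a bipartite hypergraph $\mathcal A$ with $P=E(G)$ and $Q=V(G)\times[N]$. For each edge $uv\in E(G)$ and each colour $c\in[N]$ we add the hyperedge $\{uv,(u,c),(v,c)\}$. A $P$-perfect matching is then exactly a proper $N$-edge-colouring. Rainbowness of $4$-cycles is imposed by a configuration hypergraph $\mathcal C$. Its $2$-configurations are the pairs $\{\{e,(\cdot,c),(\cdot,c)\},\{f,(\cdot,c),(\cdot,c)\}\}$ with $e,f$ opposite edges of a common $4$-cycle. Adjacent edges are already separated by $Q$. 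The work in each case is to bound, using $F$-freeness, the number $\tau(e)$ of edges $f$ opposite to $e$ in some $C_4$, since $\Delta_2(\mathcal C)\le\max_e\tau(e)$. We must also bound the codegree quantities $\codeg_2(\mathcal A,\mathcal C)$ and $\cdeg_2(\mathcal C)$ required by the Delcourt--Postle theorem.

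\emph{Case $s\le2$.} Here $F\subseteq K_{2,h}$, so $G$ is $K_{2,h}$-free and any two vertices have fewer than $h$ common neighbours. For $e=uv$, an opposite edge $xy$ has $x\in N(u)\setminus\{v\}$ and $y\in N(v)\cap N(x)\setminus\{u\}$, so $\tau(e)\le(h-1)(\Delta-1)=O(\Delta)$. In $\mathcal A$ with $N=\Delta+\Delta^{1-\eta}$, every $Q$-vertex has degree at most $\Delta$, and every $P$-vertex has degree $N$. The hypergraph $\mathcal A$ is $3$-bounded with codegree $1$. The $2$-configuration degree is $O(\Delta)=O(d)$, which is the regime allowed by Delcourt--Postle's forbidden-submatching theorem. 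One also checks $\codeg_2(\mathcal A,\mathcal C)=O(h)$, because fixing a vertex of $G$ and the colour of $e$ leaves $O(h)$ choices, and $\cdeg_2(\mathcal C)=O(\Delta^{1-\epsilon})$. The theorem then produces a $\mathcal C$-avoiding matching covering $P$ once $\eta$ is chosen small in terms of $h$, up to possibly one extra colour to absorb degree irregularities (the $+1$).

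\emph{Case $s\ge3$, $k\le1$.} Here $F-z$ is a forest for some vertex $z$, so $F\subseteq$ a star centred at $z$ joined to a forest. Form the conflict graph $L$ on $E(G)$, joining two edges when they are adjacent or opposite in a $C_4$. The plan is to show that $L$ is $(4h-2)(\Delta-1)$-degenerate and then colour greedily.
\begin{enumerate}
\item In any subgraph $L'$, pick a suitable edge $uv$ and count its opposite partners $xy$.
\item If $uv$ had at least $(4h-4)(\Delta-1)$ such partners, then some vertex $u$ would satisfy the following: the bipartite graph between $N(u)$ and the second neighbourhood through the $C_4$-edges has average degree at least about $2h$.
\item Such a bipartite graph contains every tree on $h$ vertices. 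Taking $z=u$ then embeds $F$, a contradiction.
\end{enumerate}
Adjacent conflicts contribute the remaining $2(\Delta-1)$.

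\emph{Case $k\ge2$.} Now $F$ embeds in $K_{k,T}$ together with a pendant forest, for $T$ depending on $h$. We bound $\tau(e)$ by a Kővári--Sós--Turán-type count. The opposite edges of $uv$ correspond to paths $x$--$y$ with $x\in N(u)$, $y\in N(v)$. In the bipartite graph between $N(u)$ and $N(v)$, of maximum degree $\Delta$, the absence of $F$ forbids $K_{k,T}$-like structure with forest extensions, which gives $\tau(e)=O(\Delta^{2-1/k})$. Moreover the neighbourhood of $e$ in $L$ contains few edges (local sparsity), again by $F$-freeness. With $D=O(\Delta^{2-1/k})$, run the Delcourt--Postle reduction with $N=C D/\log D$ colours. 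The hypothesis $\Delta_2(\mathcal C)\le d\log d$-type holds because configurations through a given colour class number $O(D)$ while $d\approx N$, which yields $q_B(G)=O(\Delta^{2-1/k}/\log\Delta)$.

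The main obstacle is this last case: the codegree conditions $\codeg_2$ and $\cdeg_2$ of $\mathcal C$ must be polynomially smaller than $D$. I would first try to prove that two edges of $G$ share at most $O(\Delta^{2-1/k-\epsilon})$ common opposite partners, again by an embedding argument for $F$ using $k(F)=k$.
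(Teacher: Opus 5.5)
The encoding, configuration hypergraph, case split and parameter choices in your proposal are the paper's, and the cases $s\le2$ and $k\le1$ work essentially as you describe. In the middle case, embed $F-z$ side-respectingly into the edges between $N(v)\setminus\{u\}$ and $N(u)\setminus\{v\}$, which may overlap, after pruning to minimum degree $2(h-1)$ on each side. This bounds $|\Omega(e)|$ for \emph{every} edge, so you get a maximum-degree bound, not just degeneracy.

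The case $k\ge2$, however, has a genuine gap: the two counting estimates it rests on are not established.

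For $\tau(e)=O(\Delta^{2-1/k})$, forbidding $F$ itself inside the graph between $N(u)$ and $N(v)$ gives in general only the exponent $2-\frac{1}{k+1}$. Take $F=K_{k+1,t}$: it has $k(F)=k$ and is not ``$K_{k,T}$ plus a pendant forest''. The saving comes from letting an endpoint of $e$ play a vertex of the deletion set. Take $I\subseteq Y_F$ with $|I|=k$, $F-I$ a forest, and $x_1\in I$. Every neighbour of $x_1$ lies in $X_F$, so setting $x_1\mapsto v$ shows there is no injective copy of $F-x_1$ with $X_F\to N(v)\setminus\{u\}$ and $Y_F\setminus\{x_1\}\to N(u)\setminus\{v\}$.

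What you then need is not plain K\H{o}v\'ari--S\'os--Tur\'an but a lemma for ``forest plus $r$ one-sided vertices''. Suppose $H-Z$ is a forest, $|Z|=r$, $Z$ lies in one class, and there is no side-respecting injective copy of $H$ between $A$ and $B$ (which may overlap). Then $e(A,B)\le C(|A|+|B|\,|A|^{r/(r+1)})$. The paper proves this by induction on $r$. The case $r=0$ is your pruning/greedy forest embedding. For $r\ge1$, place $z\in Z$ at each $b\in B$, apply induction inside $(N(b)\cap A,\,B\setminus\{b\})$, double count the cherries $(a,b,b')$, and finish with Cauchy--Schwarz. Taking $r=k-1$ gives the bound on $\tau(e)$.

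You leave the codegree conditions open.
\begin{itemize}
\item $\codeg_2(\mathcal A,\mathcal C)$ is harmless. For $x=z_{w,c}$ it counts members of $\Omega(e)$ through $w$, at most $2(\Delta-1)$, while $d\approx\Delta^{2-1/k}/\log\Delta\ge\Delta^{3/2-o(1)}$.
\item For $\cdeg_2(\mathcal C)$, the trivial bound $|\Omega(e)\cap\Omega(f)|\le\tau(e)$ is of order $d\log d$, so your proposed improvement is really needed. It follows from the same lemma with two vertices of $I$ placed. If $xy\in\Omega(e)\cap\Omega(f)$ with $e\ne f$, one can label so that $x$ is adjacent to distinct endpoints $a\in e$, $c\in f$, and $y$ to the other endpoints $a',c'$. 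Setting $x_1\mapsto a$ and $x_2\mapsto c$ forbids side-respecting copies of $F-\{x_1,x_2\}$ between $(N(a)\cap N(c))\setminus(e\cup f)$ and $(N(a')\cap N(c'))\setminus(e\cup f)$. The lemma with $r=k-2$, summed over at most four pairs $(a,c)$, gives $|\Omega(e)\cap\Omega(f)|=O(\Delta^{2-1/(k-1)})$.
\end{itemize}

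Now put $\lambda_k=2-\frac1k$, $\mu_k=2-\frac1{k-1}$ and $\theta=\frac{\lambda_k-\mu_k}{2\lambda_k}$. Then $\lambda_k(1-\theta)>\mu_k\ge1$, so both codegree quantities fall below $d^{1-\theta}$. The ``local sparsity'' of neighbourhoods in the conflict graph that you mention plays no role.
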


For $s\le2$, the leading coefficient in
\textbf{Theorem~\ref{thm:B-k}} is best possible in general,
since every B-coloring is a proper edge-coloring and hence
$q_B(G)\ge\Delta$. Since $K_{s-1,\Delta}$ is $F$-free and satisfies
$q_B(K_{s-1,\Delta})=(s-1)\Delta$ for $\Delta\ge s-1$,
the leading coefficient cannot in general be reduced to $1$
when $k(F)\le1$ and $s\ge3$.

Our next result shows that the bound in
\textbf{Theorem~\ref{thm:B-k}} for $k(F)\ge2$ is nearly sharp.

\begin{theorem}\label{thm:B-lower}
Let $k\ge2$ and $t\ge k+1$ be fixed, and suppose
$(k-1)(t-2)>2$. Put
$\gamma_{k,t} = 2-\frac{1}{k}-\frac{k+1}{k(t-1)}$.
There exist a constant $C=C(k,t)>0$ and $\Delta_0$ such that, for
every integer $\Delta\ge\Delta_0$, there exists a $K_{k+1,t}$-free
graph $G$ of maximum degree $\Delta$ satisfying
$q_B(G)\ge C\frac{\Delta^{\gamma_{k,t}}}{\log\Delta}$.
\end{theorem}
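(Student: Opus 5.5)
We would prove Theorem~\ref{thm:B-lower} by a random construction with alteration, in the spirit of the lower-bound argument for Theorem~\ref{thm:lower-avoiding}. The starting point is a reformulation of B-colorings. In a B-coloring every color class $M$ is a matching. Moreover, no two edges $ab,cd\in M$ may be opposite edges of a $4$-cycle, that is, we cannot have $ac,bd\in E(G)$ (or $ad,bc\in E(G)$). Call such a matching \emph{$C_4$-sparse}. Then
\[
q_B(G)\ge \frac{|E(G)|}{\max\{|M|: M \text{ a } C_4\text{-sparse matching of } G\}},
\]
so the task is to build a $K_{k+1,t}$-free graph with about $n\Delta$ edges in which every $C_4$-sparse matching is short.

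\textbf{Construction.} Take the random bipartite graph $G_0=G(n,n,p)$ with $p=n^{-\alpha}$. We would first try
\[
\alpha=\frac{k+t-1}{(k+1)t-1}+\delta
\]
for a small $\delta$. The base value of $\alpha$ is where the expected number of copies of $K_{k+1,t}$, roughly $n^{k+1+t}p^{(k+1)t}$, matches the number of edges $n^2p$. The extra $\delta$ pushes the copy count down to $o(n^2p)$, which creates the slack that appears in the correction term $-\frac{k+1}{k(t-1)}$ of $\gamma_{k,t}$.

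\textbf{Alteration.} Delete one edge from each copy of $K_{k+1,t}$. Then delete all edges at vertices of degree above $2np$. Put $\Delta\approx np$ and pad the result to maximum degree exactly $\Delta$. With high probability this leaves $\Omega(n^2p)$ edges.

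\textbf{Short $C_4$-sparse matchings.} Fix a target size $m=C\log n/p^2$. A fixed matching $M$ of size $m$ has $\binom m2$ pairs of edges, and each pair is completed to a $C_4$ with probability about $p^2$. By Janson's inequality, the probability that no pair is completed is at most $\exp(-\Omega(p^2m^2))$. The number of candidate matchings is at most $(n^2p)^m$, roughly $e^{O(m\log n)}$, so the choice of $m$ makes a union bound succeed. The Janson correction term $\bar\Delta$ comes from pairs of potential $C_4$'s sharing a cross edge. It is of order $m^3p^3$ together with $m^2p^3$ terms, and this is where we expect the hypothesis $(k-1)(t-2)>2$ to be needed: it should keep $p$ small enough that $\bar\Delta\ll p^2m^2$.

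\textbf{Main obstacle: interaction with the deletions.} The deletions can destroy the $4$-cycles that witness non-sparseness. We would handle this by running the Janson count only over cross edges that survive, and by showing separately that w.h.p. every matching of size $m$ has at most $o(p^2m^2)$ completed pairs that use a deleted edge. The deleted edges are few, of order $n^{k+1+t}p^{(k+1)t}$ plus the high-degree edges, which is where $\delta$ is spent. This robust form of the union bound, and the bookkeeping of exponents, is the step we expect to be hardest. If the first choice of $\alpha$ does not balance, we would tune it until the final exponent comes out to $\gamma_{k,t}$.

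\textbf{Final computation.} We obtain
\[
q_B(G)\ge \frac{cn^2p}{m}=\Omega\!\left(\frac{n^2p^3}{\log n}\right)=\Omega\!\left(\frac{\Delta^3/n}{\log n}\right).
\]
Substituting $n=\Delta^{1/(1-\alpha)}$ gives an exponent of $3-\frac{1}{1-\alpha}$. At $\delta=0$ this equals $2-\frac1k-\frac1{t-1}$. Absorbing $\delta$ gives the stated $\gamma_{k,t}$, and $\log n=\Theta(\log\Delta)$.
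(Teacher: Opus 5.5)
\paragraph{Gap: the edge-deletion alteration.}
The gap is in the alteration step, which carries all the difficulty. You delete \emph{edges}, so a matching whose pairs are all completed to $4$-cycles in $G_0$ may become $C_4$-sparse in the altered graph. You therefore lose the monotonicity $\alpha_B(H)\le\alpha_B(G_0)$. Your remedy is the claim that with high probability every matching of size $m$ has only $o(p^2m^2)$ completed pairs using a deleted edge. This is asserted, not proved. It concerns adversarially chosen matchings against deletions that are strongly correlated: they sit on copies of $K_{k+1,t}$, and you choose which edge of each copy to remove. A first-moment bound over a fixed $M$ does not give it.

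\paragraph{Gap: the exponent bookkeeping.}
The bookkeeping does not close either. As $\delta\to0$, your formula $3-\frac{1}{1-\alpha}$ gives $2-\frac{1}{k}-\frac{1}{t-1}$. This exceeds $\gamma_{k,t}$ by $\frac{1}{k(t-1)}$, so ``absorbing $\delta$'' cannot land on $\gamma_{k,t}$. In fact $\gamma_{k,t}=3-\frac{1}{1-\rho_0}$ for the fixed value $\rho_0=\frac{k+t}{(k+1)t}$. This exceeds your base $\alpha$ by the fixed amount $\frac{k(t-1)}{(k+1)t((k+1)t-1)}$, not by a small $\delta$. As written, you are implicitly claiming a stronger exponent whose key lemma is missing.

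\paragraph{The missing idea: delete vertices.}
Delete \emph{vertices}, with $p=\zeta n^{-\rho_0}$ and $\zeta$ small. At this density the expected number of copies of $K_{k+1,t}$ is at most $2\zeta^{(k+1)t}n$. By Markov, it is at most $8\zeta^{(k+1)t}n$ with probability at least $\frac{3}{4}$. Removing one vertex from each copy then costs at most $16\zeta^{(k+1)t}n^2p$ edges (using $\Delta(G)\le 2np$), leaving at least $\frac{n^2p}{4}$.

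Since $G-S$ is an induced subgraph, any matching in it with no two edges opposite in a $4$-cycle has the same property in $G$. Hence $\alpha_B(G-S)\le\alpha_B(G)$ for free, and your obstacle disappears. The price is that the copy count must be $O(n)$ rather than $o(n^2p)$. This is precisely the source of the term $-\frac{k+1}{k(t-1)}$ in $\gamma_{k,t}$.

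\paragraph{Janson and the role of the hypothesis.}
Your Janson discussion is also off. In $\mathbb G_{n,n}(p)$, fix a matching $\{x_iy_i\}$. The pair $\{i,j\}$ is completed exactly when $x_iy_j,x_jy_i\in E$. These two-element edge sets are pairwise disjoint over distinct pairs, and disjoint from $M$. So the probability is exactly $p^m(1-p^2)^{\binom{m}{2}}$, and no correction term of order $m^3p^3$ arises.

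The hypothesis $(k-1)(t-2)>2$ is equivalent to $\rho_0<\frac{1}{2}$, that is, $p^{-2}\log n=o(n)$. This is what makes the bound $\alpha_B\le 8p^{-2}\log n$ nontrivial, and equivalently $\gamma_{k,t}>1$. It is a lower bound on $p$, not something keeping $p$ small.
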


For fixed $k\ge2$ and $\varepsilon>0$, choose an integer $t\ge k+1$
such that $(k-1)(t-2)>2$ and $\frac{k+1}{k(t-1)}\le\varepsilon$.
Then $F=K_{k+1,t}$ satisfies $k(F)=k$ and
$\gamma_{k,t}\ge2-\frac{1}{k}-\varepsilon$, so
\textbf{Theorem~\ref{thm:B-lower}} gives the following corollary.

\begin{corollary}\label{cor:B-k-sharp}
For every integer $k\ge2$ and every $\varepsilon>0$, there exist a fixed
connected bipartite graph $F$ with $k(F)=k$, a constant $C>0$, and
$\Delta_0$ such that, for every integer $\Delta\ge\Delta_0$, there
exists an $F$-free graph $G$ of maximum degree $\Delta$ satisfying
$q_B(G)\ge C\frac{\Delta^{2-\frac{1}{k}-\varepsilon}}{\log\Delta}$.
\end{corollary}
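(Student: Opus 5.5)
The statement is Corollary~\ref{cor:B-k-sharp}. It follows directly from Theorem~\ref{thm:B-lower} once we choose $F=K_{k+1,t}$ with $t$ large enough.

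Fix $k\ge2$ and $\varepsilon>0$, and pick an integer $t$ satisfying three conditions:
\[
t\ge k+2,\qquad (k-1)(t-2)>2,\qquad \frac{k+1}{k(t-1)}\le\varepsilon .
\]
Such a $t$ exists, since the second condition holds for all large $t$ and the left side of the third tends to $0$. We set $F=K_{k+1,t}$, which is a fixed connected bipartite graph with at least one edge.

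The first step is to verify $k(F)=k$. Write $X_F$ for the side of size $k+1$ and $Y_F$ for the side of size $t$.
\begin{itemize}
\item Deleting any $k$ vertices of $X_F$ leaves $K_{1,t}$, which is a star and hence a forest. So $k(F)\le k$.
\item Suppose $I\subseteq X_F$ with $|I|\le k-1$. Then at least two vertices of $X_F$ remain, together with all $t\ge2$ vertices of $Y_F$. These contain a $C_4$, so $F-I$ is not a forest.
\item Suppose $I\subseteq Y_F$ with $|I|\le k-1$. Then at least $t-k+1\ge3\ge2$ vertices of $Y_F$ remain, together with all $k+1\ge2$ vertices of $X_F$. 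Again these contain a $C_4$.
\end{itemize}
Hence $k(F)=k$. This mirrors the computation $k(K_{m-1,r})=\min\{m-1,r\}-1$ quoted earlier in the paper.

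The second step is to apply Theorem~\ref{thm:B-lower}. Its hypotheses are $t\ge k+1$ and $(k-1)(t-2)>2$, and both hold by our choice of $t$. The theorem gives $C=C(k,t)>0$ and $\Delta_0$ such that for every $\Delta\ge\Delta_0$ there is a $K_{k+1,t}$-free graph $G$ of maximum degree $\Delta$ with $q_B(G)\ge C\Delta^{\gamma_{k,t}}/\log\Delta$.

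By the third condition on $t$, we have $\gamma_{k,t}\ge 2-\frac1k-\varepsilon$. Since $\Delta\ge1$, this gives $\Delta^{\gamma_{k,t}}\ge\Delta^{2-\frac1k-\varepsilon}$. Therefore
\[
q_B(G)\ge C\,\frac{\Delta^{2-\frac1k-\varepsilon}}{\log\Delta},
\]
which is the claimed bound.

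There is no real obstacle here. The only point needing care is the lower bound $k(F)\ge k$, and in particular checking the deletions from the larger side $Y_F$. That case is why we insist on $t\ge k+2$: it guarantees at least two vertices of $Y_F$ survive. In fact $t\ge k+1$ already suffices, since it leaves $t-k+1\ge2$ vertices.
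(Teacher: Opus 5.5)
Your proposal is correct and is essentially the paper's argument. The paper also takes $F=K_{k+1,t}$ with $t\ge k+1$, $(k-1)(t-2)>2$ and $\frac{k+1}{k(t-1)}\le\varepsilon$, notes $\gamma_{k,t}\ge 2-\frac{1}{k}-\varepsilon$, and invokes Theorem~\ref{thm:B-lower}. Your explicit check that $k(K_{k+1,t})=k$ fills in a step the paper only asserts.
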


The remainder of this paper is organized as follows.
\textbf{Section~\ref{s:auxiliary}} records the forbidden-submatching theorem
of Delcourt and Postle and a bound for coloring graph squares, proves
two counting lemmas, and then recalls the probability inequalities used
in the proofs. \textbf{Section~\ref{s:upper}} proves the upper bounds for $(2,\calF)$-avoiding colorings and B-colorings. Both upper bounds are obtained by reducing coloring problems to $P$-perfect matching problems in auxiliary hypergraphs. The corresponding lower bounds are
proved in \textbf{Section~\ref{s:lower}}. Finally,
in \textbf{Section~\ref{s:further}}, we discuss further work for the
two coloring problems.

\section{Auxiliary Lemmas}\label{s:auxiliary}

We use the following forbidden-submatching theorem of Delcourt and Postle.

\begin{lemma}[{\cite[Theorem~1.16]{DelcourtPostle2024}}]\label{lem:DP}
For all integers $r,g\ge2$ and every real $\theta\in(0,1)$, there exist an
integer $D_\theta\ge0$ and a real number $\xi>0$ such that the following
holds for every $D\ge D_\theta$. Let $\mathcal A$ be a bipartite
$r$-bounded hypergraph with parts $P,Q$ and
$\codeg(\mathcal A)\le D^{1-\theta}$ such that
$d_{\mathcal A}(x)\ge(1+D^{-\xi})D$ for every $x\in P$ and
$d_{\mathcal A}(y)\le D$ for every $y\in Q$. Let $\mathcal C$ be a
$g$-bounded configuration hypergraph for $\mathcal A$ satisfying
$\Delta_i(\mathcal C)\le\xi D^{i-1}\log D$ for every $2\le i\le g$ and
$\Delta_{k,\ell}(\mathcal C)\le D^{k-\ell-\theta}$ for every
$2\le\ell<k\le g$. If
$\codeg_2(\mathcal A,\mathcal C)\le D^{1-\theta}$ and
$\cdeg_2(\mathcal C)\le D^{1-\theta}$, then $\mathcal A$ contains a
$\mathcal C$-avoiding $P$-perfect matching. In fact, $\mathcal A$ contains
at least $D$ pairwise edge-disjoint $\mathcal C$-avoiding $P$-perfect
matchings.
\end{lemma}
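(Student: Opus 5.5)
This statement is taken verbatim from Delcourt and Postle~\cite[Theorem~1.16]{DelcourtPostle2024}, so the plan is not to reprove it. Instead I will check that the version recorded here is a faithful specialization of theirs, so that it can be invoked as a black box in Section~\ref{s:upper}.

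The first step is to match definitions. Our notions of a bipartite hypergraph with parts $P,Q$, of a $P$-perfect matching, of a configuration hypergraph (Definition~\ref{def:configuration}), and of a $\mathcal C$-avoiding matching should coincide with theirs. The same goes for the quantities $\Delta_i(\mathcal C)$, $\Delta_{k,\ell}(\mathcal C)$, $\codeg_2(\mathcal A,\mathcal C)$ and $\cdeg_2(\mathcal C)$. Their theorem is phrased with a degree parameter $D$, a codegree bound $D^{1-\theta}$, and the slack condition that vertices of $P$ have degree at least $(1+D^{-\xi})D$ while vertices of $Q$ have degree at most $D$. It also uses the bounds $\Delta_i(\mathcal C)\le \xi D^{i-1}\log D$ on configurations and the $D^{k-\ell-\theta}$ bounds on the higher codegrees of $\mathcal C$. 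The check is that each hypothesis listed above is at least as strong as the corresponding hypothesis there, with the same order of quantifiers: $r,g,\theta$ are given first, then $D_\theta$ and $\xi$ are produced, and the conclusion holds for all $D\ge D_\theta$.

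For orientation, their proof runs as follows. A random greedy (nibble) process builds the matching while tracking the edges that become forbidden because they would complete a configuration. The bound $\Delta_2(\mathcal C)\le\xi D\log D$ is exactly what lets the $2$-configurations destroy only a $D^{-\Omega(\xi)}$ fraction of the available degree. The bounds on $\codeg_2$ and $\cdeg_2$ keep this loss concentrated, and the slack $(1+D^{-\xi})$ lets the leftover be finished off so that every vertex of $P$ is covered.

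The last sentence, giving at least $D$ pairwise edge-disjoint $\mathcal C$-avoiding $P$-perfect matchings, is part of their stated conclusion as well. Repeatedly deleting found matchings would not give $D$ of them, since each deletion lowers the degrees in $P$ below the required slack. So this part must also be taken directly from their statement rather than derived.

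The only real obstacle is bookkeeping. One must confirm that no normalization differs, for instance whether they require $d_{\mathcal A}(x)\ge(1+D^{-\xi})D$ for all $x\in P$ or only on average, and whether configurations are required to have size at least two. If some hypothesis there is formally stronger, I would first try to verify that, in every application in Section~\ref{s:upper}, the constructed hypergraphs satisfy that stronger form as well.
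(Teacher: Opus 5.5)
Your plan matches the paper: Lemma~\ref{lem:DP} is not proved there. It is imported without proof as a black box from Theorem~1.16 of Delcourt and Postle, so checking that the definitions and the order of quantifiers agree is all that is needed. Also, the applications in Section~\ref{s:upper} only use the existence of a single $\mathcal C$-avoiding $P$-perfect matching, so the final sentence about $D$ pairwise edge-disjoint matchings, on which you spend some effort, is never actually needed.
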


We also use the following bound on the chromatic number of the square
of a $d$-degenerate graph.

\begin{lemma}[{\cite{KiersteadYangYi2020}}]\label{lem:square-degenerate}
For every integer $d\ge1$, every $d$-degenerate graph $G$ satisfies
$\chi(G^2)\le(2d-1)\Delta(G)+2d+1$.
\end{lemma}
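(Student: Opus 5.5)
The plan is a greedy coloring of $G^2$ along a degeneracy ordering of $G$. The count shows that a vertex has at most $(2d-1)\Delta(G)$ previously colored vertices within distance two. This is in fact slightly stronger than the stated bound $(2d-1)\Delta(G)+2d+1$, so that bound follows at once.

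\textbf{Step 1 (ordering).} Since $G$ is $d$-degenerate, repeatedly delete a vertex of degree at most $d$ in the current subgraph. Listing the vertices in the reverse of the deletion order gives an ordering $v_1,\dots,v_n$ in which every vertex has at most $d$ neighbors preceding it. Call these its \emph{back-neighbors}, and call its later neighbors \emph{forward-neighbors}. Put $\Delta=\Delta(G)$.

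\textbf{Step 2 (greedy).} Color $v_1,\dots,v_n$ in this order. Each $v$ receives the least color not used on any earlier vertex $w$ with $\mathrm{dist}_G(v,w)\le2$. Every such $w$ is either an earlier neighbor of $v$, or the endpoint of a path $v\,u\,w$ with $w$ earlier than $v$. It therefore suffices to bound these vertices $w$ by $(2d-1)\Delta$; then $(2d-1)\Delta+1\le(2d-1)\Delta+2d+1$ colors suffice.

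\textbf{Step 3 (counting).} The main point is to count the earlier $w$ within distance two via the middle vertex $u$, split into two cases.
\begin{itemize}
\item[(a)] If $u$ is a back-neighbor of $v$, there are at most $d$ choices of $u$. Each contributes $u$ itself and at most $\Delta-1$ further neighbors $w\neq v$. This gives at most $d\Delta$ vertices.
\item[(b)] If $u$ is a forward-neighbor of $v$, there are at most $\Delta-|\{\text{back-neighbors of }v\}|\le\Delta$ choices of $u$. Here $w$ precedes $v$, which precedes $u$, so $w$ is a back-neighbor of $u$ different from $v$. Since $v$ is already one of the at most $d$ back-neighbors of $u$, this leaves at most $d-1$ choices of $w$. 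This gives at most $(d-1)\Delta$ vertices.
\end{itemize}
The earlier neighbors of $v$ themselves are exactly the back-neighbors $u$ already counted in case (a).

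Summing the two cases gives at most $d\Delta+(d-1)\Delta=(2d-1)\Delta$ forbidden colors at each step, which completes the argument.

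The only delicate point is case (b): the bound there must use the ordering, through the fact that $v$ occupies one of the $d$ back-neighbor slots of $u$. Naively bounding all second neighbors through forward $u$ would give $\Delta^2$.
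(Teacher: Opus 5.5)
Your argument is correct and complete. The paper does not prove this lemma itself. It cites Kierstead, Yang, and Yi, so your proposal is a self-contained replacement for a citation rather than a variant of an argument in the paper.

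The three ingredients are all sound:
\begin{itemize}
\item the degeneracy ordering, in which every vertex has at most $d$ back-neighbors;
\item splitting the middle vertex $u$ of a path $v\,u\,w$ according to whether $u$ precedes or follows $v$;
\item the observation that a forward neighbor $u$ of $v$ already spends one of its at most $d$ back-neighbor slots on $v$.
\end{itemize}
Together these show that each vertex has at most $d\Delta+(d-1)\Delta=(2d-1)\Delta$ earlier vertices within distance two. Greedy coloring therefore uses at most $(2d-1)\Delta+1\le(2d-1)\Delta+2d+1$ colors.

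This is slightly stronger than the stated bound. The paper uses the lemma only once, in the case $m=2$ of Theorem~\ref{thm:main}. There it needs only $\chi(G^2)=O(\Delta)$ for a $(|V(F)|-2)$-degenerate graph, and your argument gives exactly that. Your elementary proof could therefore replace the citation without affecting anything downstream.
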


The following counting lemma is used in the proofs of
\textbf{Theorems~\ref{thm:main} and~\ref{thm:B-k}}.
For a graph $L$ and sets $A,B\subseteq V(L)$, define
$\mathcal E_L(A,B)=\{(a,b)\in A\times B:ab\in E(L)\}$ and
$e_L(A,B)=|\mathcal E_L(A,B)|$.
For $v\in V(L)$ and
$S\subseteq V(L)$, write $d_L(v,S)=|N_L(v)\cap S|$.

\begin{lemma}\label{lem:one-sided-forest}
Let $H$ be a bipartite graph with a fixed bipartition
$V(H)=X_H\cup Y_H$, where $X_H\cap Y_H=\emptyset$ and $X_H\ne\emptyset$.
Let $Z\subseteq Y_H$ satisfy $|Z|=r$ and $H-Z$ is a forest.
There is a constant $C>0$ such that the following holds.
If $L$ is a graph and nonempty sets $A,B\subseteq V(L)$ admit no injective map
$\phi:V(H)\to V(L)$ satisfying
$\phi(X_H)\subseteq A$, $\phi(Y_H)\subseteq B$, and
$\phi(x)\phi(y)\in E(L)$ for every $xy\in E(H)$, then
$e_L(A,B)\le C(|A|+|B|\,|A|^{\frac{r}{r+1}})$.
For $r=0$, one may take $C=2|V(H)|$.
\end{lemma}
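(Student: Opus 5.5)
The plan is to argue by induction on $r$. The base case $r=0$ is a greedy-embedding argument, and the inductive step reduces to the base case through a dependent-random-choice style count on common neighbourhoods.

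\emph{Base case $r=0$.} Here $H$ is a forest, and we show that $e_L(A,B)>2h(|A|+|B|)$ forces an embedding, where $h=|V(H)|$.
\begin{itemize}
\item Form the auxiliary bipartite graph $\Gamma$ with parts $A'$ and $B'$, disjoint copies of $A$ and $B$. Join $a'$ to $b'$ whenever $ab\in E(L)$, so $e(\Gamma)=e_L(A,B)$.
\item Repeatedly delete from $\Gamma$ any vertex of degree less than $2h$. This removes fewer than $2h(|A|+|B|)$ edges, so a nonempty subgraph $\Gamma_0$ of minimum degree at least $2h$ survives.
\item Embed $H$ into $\Gamma_0$ greedily along a leaf order of each tree component, with $X_H\to A'$ and $Y_H\to B'$. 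Components after the first are rooted at arbitrary unused vertices of $\Gamma_0$.
\item Injectivity in $L$ can fail only because a copy in $A'$ and a copy in $B'$ may come from the same vertex of $L$. Each placed vertex forbids at most $2$ vertices of $\Gamma_0$ (itself and its twin), and $2(h-1)<2h$. So every new vertex still has an admissible neighbour, and we obtain an injective $\phi$. This gives $C=2h$.
\end{itemize}

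\emph{Inductive step, $r\ge1$.} Write $T=H-Z$, a forest. Each $x\in X_H$ has a set $Z_x=N_H(x)\cap Z$ of size at most $r$.
\begin{itemize}
\item For an $r$-set $S\subseteq B$, let $A_S=\{a\in A:S\subseteq N_L(a)\}$.
\item If some $S$ (with a fixed bijection $Z\to S$) has the pair $(A_S\setminus S,\,B\setminus S)$ spanning more than $2h(|A_S|+|B|)$ edges, then the base case embeds $T$ with $X_H$-vertices in $A_S$. Mapping $Z$ onto $S$ then extends this to $H$, since every $x$ is adjacent to all of $S$ and hence to $\phi(Z_x)$. (Injectivity against $S$ costs only $r$ more forbidden vertices per step, which is absorbed by raising the minimum-degree threshold to $2h+r$.)
\item So we may assume that for every $S$,
\[
\sum_{a\in A_S}d_L(a,B)\le C_0(|A_S|+|B|).
\]
\item Sum this over all $S\in\binom{B}{r}$. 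The left side is $\sum_{a\in A}\binom{d_a}{r}d_a$, where $d_a=d_L(a,B)$. The right side is at most $C_0\bigl(\sum_a\binom{d_a}{r}+\binom{|B|}{r}|B|\bigr)$.
\item Apply convexity (Jensen/Hölder) with $e=\sum_a d_a$. The left side is at least about $|A|(e/|A|)^{r+1}$, up to the trivial term from vertices with $d_a\le 2C_0$. This yields $e\lesssim |A|+|A|^{r/(r+1)}|B|^{(r+1)/(r+1)}$, which is the claimed form $C(|A|+|B||A|^{r/(r+1)})$ after adjusting constants.
\end{itemize}

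\emph{Main obstacle.} The difficulty is the bookkeeping in this last summation. The term $\binom{|B|}{r}|B|$ is far too large if it is summed naively, so the count must be weighted differently. First I would try restricting to $S$ with $A_S\ne\emptyset$ and charging $|B|$ only to the pairs $(a,S)$ with $a\in A_S$. Replacing $|B|$ by the local quantity $e_L(A_S,B)$, with the induction applied to the smaller graph $H-z$, should give the correct exponent $r/(r+1)$.

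If that weighting fails, the fallback is dependent random choice. Pick $r$ random neighbours in $B$ of a random vertex of $A$ and take their common neighbourhood in $A$. This produces a set $A_S$ whose edges to $B$ are dense enough to rerun the base case, and the constants track through directly. The case $r=0$ with $C=2h$ is exactly the base argument above.
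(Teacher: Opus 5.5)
Your argument is correct, and the bullet-point computation already completes it. The ``main obstacle'' is a false alarm, so you need neither the reweighting nor dependent random choice.

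Summing over \emph{all} $S\in\binom{B}{r}$ is exactly the right bookkeeping. The reason is that $\binom{|B|}{r}|B|\le|B|^{r+1}/r!$ carries the same power $r+1$ as $\binom{d_a}{r}d_a$ on the left. Concretely, put $C_0=2(h-r)$ and $C_1=C_0+r$.

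First discard the vertices $a\in A$ with $d_a<2C_1$. They account for fewer than $2C_1|A|$ pairs, and the no-embedding hypothesis passes to the remaining set $A^+$. Since every vertex of $A_S$ is adjacent to all of $S$, you have $A_S\cap S=\emptyset$, so no extra injectivity slack is needed. You also have $e_L(A_S,S)=r|A_S|$. Write $A^+_S=A_S\cap A^+$. The forest case then gives $e_L(A^+_S,B)\le C_1|A^+_S|+C_0|B|$ for every $S$, and summing yields
\[
\sum_{a\in A^+}\binom{d_a}{r}(d_a-C_1)\le C_0\binom{|B|}{r}|B|\le\frac{C_0}{r!}\,|B|^{r+1}.
\]
Because $d_a\ge2C_1\ge2r$, each term on the left is at least $d_a^{r+1}/(2^{r+1}r!)$. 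Hence $\sum_{a\in A^+}d_a^{r+1}\le2^{r+1}C_0|B|^{r+1}$, and H\"older gives $e_L(A^+,B)\le2C_0^{1/(r+1)}|A|^{r/(r+1)}|B|$. Altogether $C=2(2h-r)$ works for $r\ge1$.

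This is a genuinely different route from the paper's, although both are K\H{o}v\'ari--S\'os--Tur\'an-type double counts.

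The paper removes one vertex $z\in Z$ at a time. For each $b\in B$ it applies the induction hypothesis for $H-z$ (with $r-1$) to the pair $(N_L(b)\cap A,\,B\setminus\{b\})$. It then sums over $b$ by counting paths $b\,a\,b'$ with $a\in A$, and closes with H\"older on $\sum_b d_L(b,A)^{1-1/r}$ and Cauchy--Schwarz.

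You instead fix the image $S$ of all of $Z$ at once, use only the forest case on the common neighbourhood $A_S$, and finish with a single power-mean step. Your proof is therefore not really inductive, since no hypothesis for $r-1$ is used, and it yields an explicit constant. The paper's version keeps each step a one-vertex extension and reuses the lemma's own statement, at the price of recursively defined constants.
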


\begin{proof}
Put $h=|V(H)|$ and $M=e_L(A,B)$. If $M=0$, the assertion is immediate,
so assume $M>0$. In particular, $A$ and $B$ are nonempty.
We argue by induction on $r$.

Suppose first that $r=0$. Assume, for a contradiction, that
$M>2h(|A|+|B|)$. Starting with $A_0=A$ and $B_0=B$, perform the
following deletions. If $a\in A_i$ satisfies $d_L(a,B_i)<2h$, set
$A_{i+1}=A_i\setminus\{a\}$ and $B_{i+1}=B_i$. Otherwise, if
$b\in B_i$ satisfies $d_L(b,A_i)<2h$, set $A_{i+1}=A_i$ and
$B_{i+1}=B_i\setminus\{b\}$. Stop when neither deletion is possible.

Each step removes fewer than $2h$ pairs from
$\mathcal E_L(A_i,B_i)$, and there are at most $|A|+|B|$ steps.
Consequently the final sets $A_*,B_*$ satisfy $e_L(A_*,B_*)>0$,
$d_L(a,B_*)\ge2h$ for every $a\in A_*$, and
$d_L(b,A_*)\ge2h$ for every $b\in B_*$. In particular,
$|A_*|,|B_*|\ge2h$.

Root each component of the forest $H$, including its isolated vertices.
Order $V(H)=\{v_1,\ldots,v_h\}$ so that the root of each component
precedes its other vertices and every nonroot vertex precedes all its
children. Every nonroot $v_i$ then has exactly one earlier neighbor,
its parent $v_{p(i)}$. Put $S_i=A_*$ if $v_i\in X_H$ and $S_i=B_*$
if $v_i\in Y_H$. We construct $\phi(v_i)$ successively.
After choosing $\phi(v_1),\ldots,\phi(v_{i-1})$, let
$U_i=\{\phi(v_1),\ldots,\phi(v_{i-1})\}$.
If $v_i$ is a root, choose $\phi(v_i)\in S_i\setminus U_i$.
Otherwise choose
$\phi(v_i)\in (N_L(\phi(v_{p(i)}))\cap S_i)\setminus U_i$.
In either case the set of available vertices has size at least
$2h-(i-1)>0$. Thus $\phi$ is injective, maps the two classes of $H$
into $A_*,B_*$, respectively, and preserves every edge of $H$.
This contradicts the hypothesis and proves the assertion for $r=0$.

Now let $r\ge1$ and choose $z\in Z$. Put $H'=H-z$ and
$Z'=Z\setminus\{z\}$, so that $H'-Z'=H-Z$ is a forest.
Let $B^+=\{b\in B:d_L(b,A)>0\}$. Since $M>0$, the set $B^+$ is
nonempty. For each $b\in B^+$, put
$A_b=N_L(b)\cap A$, $B_b=B\setminus\{b\}$, and
$L_b=L[A_b\cup B_b]$.

There is no injective map $\phi:V(H')\to V(L_b)$ with
$\phi(X_H)\subseteq A_b$, $\phi(Y_H\setminus\{z\})\subseteq B_b$,
and $\phi(x)\phi(y)\in E(L_b)$ for every $xy\in E(H')$.
Indeed, $b\notin A_b\cup B_b$, since $L$ has no loops.
Extending such a map by $\phi(z)=b$ would remain injective.
Every neighbor of $z$ in $H$ belongs to $X_H$, whose image lies in
$A_b\subseteq N_L(b)$, so the extension would preserve all edges of $H$.
The sets in this extension are illustrated in
\textbf{Figure~\ref{fig:one-sided-counting}}.

If $B_b=\emptyset$, then $e_{L_b}(A_b,B_b)=0$.
Otherwise, both $A_b$ and $B_b$ are nonempty, and we apply the
induction hypothesis to $H'$ with classes $X_H$ and
$Y_H\setminus\{z\}$ and deletion set $Z'$.
Consequently, with a constant $C'>0$ depending only on $H'$, for every $b\in B^+$ we have
\begin{equation}\label{eq:forest-induction}
e_{L_b}(A_b,B_b)
\le C'\bigl(d_L(b,A)+|B|\,d_L(b,A)^{1-\frac{1}{r}}\bigr).
\end{equation}
To sum \eqref{eq:forest-induction} over $b\in B^+$, define
\[
\begin{aligned}
\mathcal P
&=\{(b,(a,b')):b\in B^+,\ (a,b')\in\mathcal E_{L_b}(A_b,B_b)\},\\
\mathcal T
&=\{(a,b,b'):a\in A,\ b,b'\in N_L(a)\cap B,\ b\ne b'\}.
\end{aligned}
\]
The map $\Psi:\mathcal P\to\mathcal T$ given by
$\Psi(b,(a,b'))=(a,b,b')$ is a bijection. In fact, membership in
$\mathcal P$ gives $ab,ab'\in E(L)$ and $b'\ne b$.
Conversely, if $(a,b,b')\in\mathcal T$, then $b\in B^+$,
$a\in A_b$, $b'\in B_b$, and $ab'\in E(L_b)$, so
$(b,(a,b'))$ is its unique preimage. Therefore
\begin{equation}\label{eq:forest-double-count}
\begin{aligned}
\sum_{a\in A}d_L(a,B)(d_L(a,B)-1)
&=|\mathcal T|=|\mathcal P|
=\sum_{b\in B^+}e_{L_b}(A_b,B_b)\\
&\le C'M+C'|B|\sum_{b\in B^+}d_L(b,A)^{1-\frac{1}{r}}.
\end{aligned}
\end{equation}
Here $\sum_{b\in B^+}d_L(b,A)=M$.
For $r>1$, concavity gives
\begin{equation}\label{eq:forest-power-sum}
\sum_{b\in B^+}d_L(b,A)^{1-\frac{1}{r}}
\le |B^+|^{\frac{1}{r}}M^{1-\frac{1}{r}}
\le |B|^{\frac{1}{r}}M^{1-\frac{1}{r}}.
\end{equation}
For $r=1$, \eqref{eq:forest-power-sum} holds because
$\sum_{b\in B^+}d_L(b,A)^0=|B^+|\le|B|$.
By the Cauchy--Schwarz inequality,
$M^2\le |A|\sum_{a\in A}d_L(a,B)^2$.
Together with \eqref{eq:forest-double-count} and
\eqref{eq:forest-power-sum}, this yields
\[
\frac{M^2}{|A|}
\le (C'+1)M+C'|B|^{1+\frac{1}{r}}M^{1-\frac{1}{r}}.
\]
If $(C'+1)M\ge\frac{M^2}{2|A|}$, then $M\le2(C'+1)|A|$.
Otherwise,
$\frac{M^2}{2|A|}\le C'|B|^{1+\frac{1}{r}}M^{1-\frac{1}{r}}$, and hence
$M\le(2C')^{\frac{r}{r+1}}|B|\,|A|^{\frac{r}{r+1}}$.
Taking $C\ge\max\{2(C'+1),(2C')^{\frac{r}{r+1}}\}$ proves the
claim. Since $H$ has only finitely many bipartitions and subsets $Z$,
the constant can be chosen to depend only on $H$.
\end{proof}

\begin{figure}[htbp]
\centering
\begin{tikzpicture}[font=\small]
\tikzset{pt/.style={circle,fill=black,inner sep=1.5pt},
  edge/.style={semithick},
  lab/.style={fill=none,inner sep=1.5pt}}
\draw[thick] (-3.3,0) ellipse (2.3 and 2.8);
\draw[thick] (3.3,0) ellipse (2.3 and 2.8);
\node at (-3.3,3.1) {$A$};
\node at (3.3,3.1) {$B$};
\filldraw[fill=black!5,draw=black!60] (-3.3,-.25) ellipse (1.98 and 2.18);
\filldraw[fill=black!5,draw=black!60] (3.3,-.68) ellipse (1.98 and 1.78);
\node[lab] at (-3.3,1.3) {$A_b=N_L(b)\cap A$};
\node[lab] at (3.3,.6) {$B_b=B\setminus\{b\}$};
\draw (-3.3,-.45) ellipse (1.3 and 1.15);
\node[lab] at (-3.3,-1.28) {$\phi(X_H)$};
\draw (3.3,-.82) ellipse (1.55 and 1.08);
\node[lab] at (3.3,-1.50) {$\phi(Y_H\setminus\{z\})$};
\draw[dashed] (3.8,-.66) ellipse (.85 and .58);
\node[lab,font=\scriptsize] at (3.8,-.88) {$\phi(Z\setminus\{z\})$};
\node[pt] (x1) at (-3.7,.04) {};
\node[pt] (x2) at (-2.85,-.2) {};
\node[pt] (x3) at (-3.45,-.67) {};
\node[pt] (y1) at (2.45,-.49) {};
\node[pt] (y2) at (2.95,-1.0) {};
\node[pt] (y3) at (3.8,-.45) {};
\node[pt,label=above:{$b=\phi(z)$}] (b) at (3.3,1.95) {};
\draw[edge] (x1)--(y1);
\draw[edge] (x2)--(y2);
\draw[edge] (x2)--(y3);
\draw[edge] (x3)--(y2);
\draw[dashed,edge] (b)--(x1);
\draw[dashed,edge] (b)--(x2);
\draw[dashed,edge] (b)--(x3);
\node[lab] at (0,-2.85) {$L_b=L[A_b\cup B_b]$};
\end{tikzpicture}
\caption{Extending an embedding of $H-z$ by $\phi(z)=b$.
The shaded regions are $A_b$ and $B_b$, whose union induces $L_b$.
Every vertex of $A_b$ is adjacent to $b$. The sets $A,B$ may overlap,
although drawn separately.}
\label{fig:one-sided-counting}
\end{figure}

We next establish another counting lemma needed in the proof of
\textbf{Theorem~\ref{thm:B-k}}.
For an edge $e\in E(G)$, let
$\Omega_G(e)=\{f\in E(G):e\text{ and }f\text{ are opposite edges of a }C_4\}$.
When $G$ is fixed, write $\Omega(e)=\Omega_G(e)$.

\begin{lemma}\label{lem:B-local-k}
Let $F$ be a fixed connected bipartite graph with at least one edge, put
$k=k(F)$, and let $G$ be an $F$-free graph with maximum degree $\Delta$.
For $e\in E(G)$ and $v\in V(G)$, put
$\rho(e,v)=|\{f\in\Omega(e):v\in f\}|$.
For every $e\in E(G)$ and $v\in V(G)$, we have
$\rho(e,v)\le2(\Delta-1)$, and the following bounds hold:
\begin{enumerate}
\item\label{item:B-local-linear} if $k\le1$, then
$|\Omega(e)|\le4(|V(F)|-1)(\Delta-1)$;
\item\label{item:B-local-power} if $k\ge2$, then $|\Omega(e)|=O(\Delta^{2-\frac{1}{k}})$;
\item if $k\ge2$ and $e,f\in E(G)$ are distinct, then
$|\Omega(e)\cap\Omega(f)|=O(\Delta^{2-\frac{1}{k-1}})$.
\end{enumerate}
Moreover, if $G$ is $K_{2,t}$-free for an integer $t\ge2$, then
$|\Omega(e)|\le(t-1)\Delta$, $\rho(e,v)\le2(t-1)$, and
$|\Omega(e)\cap\Omega(f)|\le4(t-1)^2$ for all distinct edges $e,f$.
\end{lemma}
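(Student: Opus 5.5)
Throughout, write $e=uv$. A $4$-cycle having $e$ and $f$ as opposite edges has the form $u\,v\,y\,x\,u$, so $f=xy$ with $x\in A_e:=N(u)\setminus\{v\}$, $y\in B_e:=N(v)\setminus\{u\}$ and $xy\in E(G)$. Hence every $f\in\Omega(e)$ yields at least one ordered pair in $\mathcal E_G(A_e,B_e)$, and so $|\Omega(e)|\le e_G(A_e,B_e)$. Note also that $u\notin A_e\cup B_e$ and $v\notin A_e\cup B_e$.

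\emph{The bound on $\rho(e,v)$.} Fix a vertex $w$ and write $f=xy$ as above. If $w=x$, then $y\in N(w)\cap N(v)$, which gives at most $\Delta-1$ choices, since $u$ is excluded or $w$ itself uses one edge. If $w=y$, then symmetrically $x\in N(w)\cap N(u)$. Together this gives $\rho(e,w)\le2(\Delta-1)$. In the $K_{2,t}$-free case, the same count uses $|N(w)\cap N(v)|\le t-1$ (here $w\ne v$), which gives $\rho\le2(t-1)$. Likewise, for each $x\in A_e$ the number of admissible $y$ is at most $|N(x)\cap N(v)|\le t-1$, so $|\Omega(e)|\le(t-1)\Delta$.

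\emph{Items 1 and 2.} Choose a minimum set $Z$ witnessing $k(F)$, and orient the bipartition so that $Z\subseteq Y_F$. If $k\ge1$, pick $z\in Z$. If $k=0$, pick any $z\in Y_F$ (if $F$ is a single edge, $G$ is edgeless and everything is trivial). Put $H=F-z$ with classes $X_F$ and $Y_F\setminus\{z\}$ and deletion set $Z\setminus\{z\}$ of size $r=\max\{k-1,0\}$; then $H-(Z\setminus\{z\})$ is a forest and $X_H=X_F\neq\emptyset$. An injective edge-preserving map of $H$ with $X_H\to A_e$ and $Y_H\to B_e$ extends by $z\mapsto u$: $u$ is adjacent to all of $A_e$, $u\notin A_e\cup B_e$, and all neighbors of $z$ lie in $X_F$. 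The extension would be a copy of $F$, which is impossible. So Lemma~\ref{lem:one-sided-forest} applies with $A=A_e$ and $B=B_e$. For $k\le1$ (so $r=0$, and $C=2|V(H)|=2(h-1)$ with $h=|V(F)|$) it gives $|\Omega(e)|\le2(h-1)(|A_e|+|B_e|)\le4(h-1)(\Delta-1)$. For $k\ge2$ it gives $O(\Delta+\Delta\cdot\Delta^{(k-1)/k})=O(\Delta^{2-1/k})$.

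\emph{Item 3 and the $K_{2,t}$ intersection bound.} Let $f=u'v'\neq e$. An edge $xy\in\Omega(e)\cap\Omega(f)$ has one endpoint adjacent to $u$ and the other to $v$, and one endpoint adjacent to $u'$ and the other to $v'$. Splitting according to which endpoint is adjacent to which vertex gives at most four cases. By relabeling within $f$, a typical case is that $x$ is adjacent to both $a$ and $a'$ and $y$ is adjacent to both $b$ and $b'$, where $\{a,b\}=\{u,v\}$ and $\{a',b'\}=\{u',v'\}$. Since $e\ne f$, either $a\ne a'$ or $b\ne b'$; by symmetry assume $a\ne a'$.

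Set $A=(N(a)\cap N(a'))\setminus\{u,v,u',v'\}$ and $B=N(b)\setminus\{u,v,u',v'\}$. Remove two vertices $z,z'\in Z$ from $F$ and map them to $a,a'$. If $|Z|<2$ this is not available, but then $k\le1$. The remaining graph has a one-sided deletion set of size $k-2$, and the same extension argument shows it has no embedding into $(A,B)$. Lemma~\ref{lem:one-sided-forest} then gives $O(\Delta+\Delta\cdot\Delta^{(k-2)/(k-1)})=O(\Delta^{2-1/(k-1)})$ per case; for $k=2$ we have $r=0$ and get $O(\Delta)$. The subtracted vertices change the count by only $O(\Delta)$, since each excluded vertex lies in at most $\Delta$ edges.

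In the $K_{2,t}$-free case, the same case split gives $x\in N(a)\cap N(a')$, which has at most $t-1$ choices, and then $y\in N(x)\cap N(b)$, with at most $t-1$ choices. This gives $(t-1)^2$ per case and $4(t-1)^2$ in total.

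The main obstacle is the bookkeeping in item 3. One must make sure that in every case the two distinct vertices of $e$ and $f$ lie on the same side as the deleted vertices of $Z$, so that they can serve as images of $z$ and $z'$. One must also preserve injectivity, by excluding $u,v,u',v'$ from $A$ and $B$ without losing more than $O(\Delta)$ edges.
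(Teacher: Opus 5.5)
Your proposal is correct and follows essentially the same route as the paper. Both bound $|\Omega(e)|$ by $e_G(A_e,B_e)$, rule out an embedding of $F-z$ (respectively $F-\{z,z'\}$) by extending it with $z$ (respectively $z,z'$) mapped to endpoints of $e$ (and $f$), and then invoke Lemma~\ref{lem:one-sided-forest}; both also use direct common-neighbor counts in the $K_{2,t}$-free case.

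One small simplification in item~3: no edges are lost by removing $e\cup f$ from $A$ and $B$, because every member of $\Omega(e)\cap\Omega(f)$ is disjoint from both $e$ and $f$. Your $O(\Delta)$ correction is therefore unnecessary, though harmless. Likewise your looser choice $B=N(b)\setminus(e\cup f)$, in place of the paper's $(N(b)\cap N(b'))\setminus(e\cup f)$, does no harm.
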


\begin{proof}
There is nothing to prove when $E(G)=\emptyset$, so assume $\Delta\ge1$.
Put $h=|V(F)|$. By the definition of $k(F)$, there are a bipartition
$V(F)=X_F\cup Y_F$ with $X_F\cap Y_F=\emptyset$ and a set
$I\subseteq Y_F$ such that
$|I|=k$ and $F-I$ is a forest. If $k\ge1$, write
$I=\{x_1,\ldots,x_k\}$. If $k=0$, choose any $x_1\in Y_F$;
this class is nonempty because $F$ has an edge.

Fix $e=uv$. If $\Omega(e)=\emptyset$, all bounds involving $e$ are
immediate, so assume $\Omega(e)\ne\emptyset$.
Put $A_e=N_G(v)\setminus\{u\}$ and $B_e=N_G(u)\setminus\{v\}$.
Both sets are nonempty and contained in
$V(G)\setminus\{u,v\}$.
The map $\pi_e:\mathcal E_G(A_e,B_e)\to\Omega(e)$ given by
$\pi_e(x,y)=xy$ is surjective. Indeed, each pair $(x,y)$ in its domain
gives the four distinct vertices of the cycle $vxyuv$.
Conversely, if $f\in\Omega(e)$, its endpoints can be named $x,y$ so that
$vxyuv$ is a $4$-cycle, and then $(x,y)\in\mathcal E_G(A_e,B_e)$.
It follows that $|\Omega(e)|\le e_G(A_e,B_e)$.

Set $H=F-x_1$, with bipartition $X_H=X_F$ and
$Y_H=Y_F\setminus\{x_1\}$. There is no injective map
$\phi:V(H)\to V(G)$ with $\phi(X_H)\subseteq A_e$,
$\phi(Y_H)\subseteq B_e$, and
$\phi(x)\phi(y)\in E(G)$ for every $xy\in E(H)$.
Such a map would avoid $v$ and could be extended by $\phi(x_1)=v$:
every neighbor of $x_1$ in $F$ belongs to $X_F$, whose image is contained
in $A_e\subseteq N_G(v)$.

If $k\le1$, then $H$ is a forest. Applying
\textbf{Lemma~\ref{lem:one-sided-forest}} to $H$, $L=G$,
$A=A_e$, $B=B_e$, and $Z=\emptyset$ gives
\[
|\Omega(e)|
\le e_G(A_e,B_e)
\le2(h-1)(|A_e|+|B_e|)
\le4(h-1)(\Delta-1).
\]
If $k\ge2$, take $Z=I\setminus\{x_1\}\subseteq Y_H$, so that
$|Z|=k-1$ and $H-Z=F-I$ is a forest.
\textbf{Lemma~\ref{lem:one-sided-forest}}, with $r=k-1$, gives
\[
|\Omega(e)|
\le e_G(A_e,B_e)
\le C\bigl(|A_e|+|B_e|\,|A_e|^{\frac{k-1}{k}}\bigr)
=O(\Delta^{2-\frac{1}{k}}).
\]
This proves parts~\ref{item:B-local-linear} and~\ref{item:B-local-power}
of \textbf{Lemma~\ref{lem:B-local-k}}.

Next fix $z\in V(G)$. If $z\in\{u,v\}$, then $\rho(e,z)=0$.
Otherwise each edge $zw\in\Omega(e)$ satisfies
$w\in N_G(z)\cap(N_G(u)\setminus\{v\})$ or
$w\in N_G(z)\cap(N_G(v)\setminus\{u\})$.
Since distinct edges incident to $z$ have distinct other endpoints,
\begin{equation}\label{eq:B-rho}
\begin{aligned}
\rho(e,z)
&\le |N_G(z)\cap(N_G(u)\setminus\{v\})|
   +|N_G(z)\cap(N_G(v)\setminus\{u\})|\\
&\le(d_G(u)-1)+(d_G(v)-1)
\le2(\Delta-1).
\end{aligned}
\end{equation}

We now count common members of $\Omega(e)$ and $\Omega(f)$ for distinct
edges $e,f$. Put $S=e\cup f$ and
$\mathcal I=\{(a,c)\in e\times f:a\ne c\}$.
For $(a,c)\in\mathcal I$, let $a'$ and $c'$ be determined by
$e=\{a,a'\}$ and $f=\{c,c'\}$, and define
\[
A_{a,c}=(N_G(a)\cap N_G(c))\setminus S,
\qquad
B_{a,c}=(N_G(a')\cap N_G(c'))\setminus S.
\]
Let
$\mathcal D=\{(a,c,x,y):(a,c)\in\mathcal I,\
(x,y)\in\mathcal E_G(A_{a,c},B_{a,c})\}$.
The map $\pi:\mathcal D\to\Omega(e)\cap\Omega(f)$ defined by
$\pi(a,c,x,y)=xy$ is surjective.
Every element of $\mathcal D$ gives the two cycles
$axy a'a$ and $cxy c'c$, so its image belongs to the stated range.
Conversely, take $g=xy\in\Omega(e)\cap\Omega(f)$.
The two witnessing cycles give endpoints $a,a'$ of $e$ and $c,c'$ of
$f$ with $ax,cx,a'y,c'y\in E(G)$.
At least one of $a\ne c$ and $a'\ne c'$ holds, since $e\ne f$.
Naming the endpoints $x,y$ so that $a\ne c$, we obtain
$(a,c,x,y)\in\mathcal D$. Thus
\begin{equation}\label{eq:B-common-count}
|\Omega(e)\cap\Omega(f)|
\le\sum_{(a,c)\in\mathcal I}e_G(A_{a,c},B_{a,c}).
\end{equation}

Suppose $k\ge2$. Terms in \eqref{eq:B-common-count} with
$A_{a,c}=\emptyset$ or $B_{a,c}=\emptyset$ are zero.
Fix any remaining $(a,c)\in\mathcal I$.
Put $H=F-\{x_1,x_2\}$, $X_H=X_F$,
$Y_H=Y_F\setminus\{x_1,x_2\}$, and
$Z=I\setminus\{x_1,x_2\}$. Then $Z\subseteq Y_H$,
$|Z|=k-2$, and $H-Z=F-I$ is a forest.
An injective map $\phi:V(H)\to V(G)$ preserving the edges of $H$
and satisfying $\phi(X_H)\subseteq A_{a,c}$ and
$\phi(Y_H)\subseteq B_{a,c}$ would extend to a copy of $F$ by
$\phi(x_1)=a$ and $\phi(x_2)=c$.
Indeed, the original image avoids $S$, the vertices $a,c$ are distinct,
and every vertex of $\phi(X_H)$ is adjacent to both $a$ and $c$.
Thus no such map exists, and
\textbf{Lemma~\ref{lem:one-sided-forest}}, with
$L=G$, $A=A_{a,c}$, $B=B_{a,c}$, and $r=k-2$, gives
\[
e_G(A_{a,c},B_{a,c})
\le C\bigl(|A_{a,c}|+|B_{a,c}|\,|A_{a,c}|^{\frac{k-2}{k-1}}\bigr)
=O(\Delta^{2-\frac{1}{k-1}}).
\]
Since $|\mathcal I|\le4$, \eqref{eq:B-common-count} proves the third
assertion.

Finally, suppose that $G$ is $K_{2,t}$-free. For distinct vertices
$x,y\in V(G)$, we have $|N_G(x)\cap N_G(y)|\le t-1$;
otherwise $x,y$ and any $t$ of their common neighbors give a copy of
$K_{2,t}$. For $e=uv$ and $x\in A_e$, we have $x\ne u$ and
$N_G(x)\cap B_e\subseteq N_G(x)\cap N_G(u)$. Consequently
\[
|\Omega(e)|
\le e_G(A_e,B_e)
=\sum_{x\in A_e}|N_G(x)\cap B_e|
\le(t-1)|A_e|
\le(t-1)\Delta.
\]
If $z\notin\{u,v\}$, the two terms in \eqref{eq:B-rho} are at most
$|N_G(z)\cap N_G(u)|$ and $|N_G(z)\cap N_G(v)|$, respectively.
Each is at most $t-1$, proving $\rho(e,z)\le2(t-1)$;
the case $z\in\{u,v\}$ again gives zero.

For distinct $e,f$, use the sets in \eqref{eq:B-common-count}.
For each $(a,c)\in\mathcal I$, the inequality $a\ne c$ gives
$|A_{a,c}|\le t-1$.
For every $x\in A_{a,c}$, we have $x\notin S$, and hence $x\ne a'$.
Since $B_{a,c}\subseteq N_G(a')$, it follows that
$|N_G(x)\cap B_{a,c}|\le|N_G(x)\cap N_G(a')|\le t-1$.
Therefore
$e_G(A_{a,c},B_{a,c})
=\sum_{x\in A_{a,c}}|N_G(x)\cap B_{a,c}|
\le(t-1)|A_{a,c}|\le(t-1)^2$.
Summing over $|\mathcal I|\le4$ in \eqref{eq:B-common-count} gives
$|\Omega(e)\cap\Omega(f)|\le4(t-1)^2$, as required.
\end{proof}

We also use the following two probability inequalities.

\begin{lemma}[{\cite[Theorem~8.1.1]{AlonSpencer2016}}]\label{lem:janson}
Let $\Omega_p$ be a random subset of a finite set $\Omega$, obtained by
including each element independently with probability $p\in[0,1]$.
For a finite family $\{A_i:i\in I\}$ of nonempty subsets of $\Omega$,
put $\mu=\sum_{i\in I}p^{|A_i|}$ and
$\Gamma=\sum_{\substack{i,j\in I,\ i\ne j\\A_i\cap A_j\ne\emptyset}}
p^{|A_i\cup A_j|}$.
If $\Gamma\le\mu$, then
$\mathbb P(A_i\nsubseteq\Omega_p\text{ for every }i\in I)
\le\exp(-\frac{\mu}{2})$.
\end{lemma}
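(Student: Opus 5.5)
This is Janson's inequality in the form of \cite[Theorem~8.1.1]{AlonSpencer2016}. I would follow the standard sequential-conditioning proof, combined with the Harris (FKG) inequality. The goal is the estimate
\[
\mathbb P\Bigl(\bigcap_{i\in I}\overline{B_i}\Bigr)\le\exp\Bigl(-\mu+\frac{\Gamma}{2}\Bigr),
\qquad B_i=\{A_i\subseteq\Omega_p\}.
\]
Once this is established, the hypothesis $\Gamma\le\mu$ gives the bound $\exp(-\mu/2)$ immediately. Note that $\mathbb P(B_i)=p^{|A_i|}$ and $\mathbb P(B_i\wedge B_j)=p^{|A_i\cup A_j|}$. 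Also, $\Gamma$ is a sum over ordered pairs, so each unordered intersecting pair $\{i,j\}$ contributes twice.

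First I enumerate $I=\{1,\dots,n\}$ and write $r_i=\mathbb P(B_i\mid\bigcap_{j<i}\overline{B_j})$. Then
\[
\mathbb P\Bigl(\bigcap_i\overline{B_i}\Bigr)=\prod_i(1-r_i)\le\exp\Bigl(-\sum_i r_i\Bigr).
\]
The degenerate case where some conditioning event has probability zero is handled separately: then the target probability is $0$ anyway. It therefore suffices to show
\[
r_i\ge\mathbb P(B_i)-\sum_{j<i,\,A_j\cap A_i\neq\emptyset}\mathbb P(B_i\wedge B_j).
\]
Summing over $i$ then gives $\sum_i r_i\ge\mu-\Gamma/2$, because each unordered intersecting pair appears once with $j<i$.

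For the key inequality, fix $i$ and split the conditioning event into two parts: $D_1=\bigcap\overline{B_j}$ over $j<i$ with $A_j\cap A_i\ne\emptyset$, and $D_2=\bigcap\overline{B_j}$ over the remaining $j<i$. Then
\[
r_i=\mathbb P(B_i\mid D_1\wedge D_2)\ge\mathbb P(B_i\wedge D_1\mid D_2)=\mathbb P(B_i\mid D_2)\,\mathbb P(D_1\mid B_i\wedge D_2).
\]
Since $D_2$ depends only on coordinates outside $A_i$, it is independent of $B_i$, so $\mathbb P(B_i\mid D_2)=\mathbb P(B_i)$. A union bound then gives
\[
\mathbb P(D_1\mid B_i\wedge D_2)\ge1-\sum_j\mathbb P(B_j\mid B_i\wedge D_2).
\]

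The main step is to show $\mathbb P(B_j\mid B_i\wedge D_2)\le\mathbb P(B_j\mid B_i)$. Conditioning on $B_i$ fixes the coordinates in $A_i$ to be present and leaves a product measure on the remaining coordinates. Under this measure $B_j$ is an increasing event and $D_2$ is a decreasing event, so the Harris inequality gives $\mathbb P(B_j\wedge D_2\mid B_i)\le\mathbb P(B_j\mid B_i)\mathbb P(D_2\mid B_i)$, which is exactly the claim. Multiplying through by $\mathbb P(B_i)$ yields $\mathbb P(B_i)\mathbb P(B_j\mid B_i)=\mathbb P(B_i\wedge B_j)$, and this completes the bound on $r_i$ and hence the lemma.
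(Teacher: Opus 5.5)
Your argument is correct and is the standard proof of Janson's inequality from the cited Alon--Spencer theorem: sequential conditioning, splitting the earlier events into those meeting $A_i$ and those disjoint from it, independence for the latter, and the Harris inequality in the space conditioned on $B_i$, giving $\exp(-\mu+\Gamma/2)\le\exp(-\mu/2)$ (the paper itself gives no proof and just cites that theorem). The one corner case you leave implicit, $p=0$, where conditioning on $B_i$ is undefined, is trivial because then $\mu=0$.
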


\begin{lemma}[{\cite[Appendix~A.1]{AlonSpencer2016}}]\label{lem:chernoff}
Let $X\sim\operatorname{Bin}(n,p)$, where $n$ is a positive integer and
$p\in[0,1]$, and put $\mu=np$. For every $\delta\in(0,1]$,
\begin{itemize}
    \item $\mathbb P(X\le(1-\delta)\mu) \le\exp(-\frac{\delta^2\mu}{2}),$
    \item $\mathbb P(X\ge(1+\delta)\mu) \le\exp(-\frac{\delta^2\mu}{3}).$
\end{itemize}
In particular,
$\mathbb P(X\le\frac{\mu}{2})\le e^{-\frac{\mu}{8}}$ and
$\mathbb P(X\ge2\mu)\le e^{-\frac{\mu}{3}}$.
\end{lemma}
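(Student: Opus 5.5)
This is the Chernoff bound for a binomial variable, so I will prove it by the exponential moment method. Since $X=\sum_{i=1}^n\xi_i$ with independent Bernoulli($p$) summands, for every real $\lambda$ we have
\[
\mathbb E e^{\lambda X}=(1-p+pe^{\lambda})^n\le\exp\bigl(\mu(e^{\lambda}-1)\bigr),
\]
using $1+x\le e^x$. If $p=0$ then $\mu=0$ and both bounds are trivial, so assume $\mu>0$.

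For the upper tail, I take $\lambda=\log(1+\delta)>0$ and apply Markov's inequality to $e^{\lambda X}$. This gives
\[
\mathbb P(X\ge(1+\delta)\mu)\le\Bigl(\frac{e^{\delta}}{(1+\delta)^{1+\delta}}\Bigr)^{\mu}.
\]
It then suffices to show $f(\delta)=\delta-(1+\delta)\log(1+\delta)+\frac{\delta^2}{3}\le0$ on $(0,1]$. We have $f(0)=0$ and $f'(\delta)=-\log(1+\delta)+\frac{2\delta}{3}$. This derivative vanishes at $0$, and $f''(\delta)=-\frac{1}{1+\delta}+\frac23$, which is negative for $\delta<\frac12$ and positive afterwards. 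Hence $f'$ first decreases and then increases, and $f'(1)=\frac23-\log2<0$. So $f'\le0$ on $[0,1]$, and therefore $f\le0$ there.

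For the lower tail, I take $\lambda=\log(1-\delta)<0$ when $\delta<1$. Applying Markov's inequality to $e^{\lambda X}$ on the event $X\le(1-\delta)\mu$ gives
\[
\mathbb P(X\le(1-\delta)\mu)\le\Bigl(\frac{e^{-\delta}}{(1-\delta)^{1-\delta}}\Bigr)^{\mu}.
\]
We then need $-\delta-(1-\delta)\log(1-\delta)\le-\frac{\delta^2}{2}$. This follows from the expansion $(1-\delta)\log(1-\delta)=-\delta+\sum_{j\ge2}\frac{\delta^j}{j(j-1)}\ge-\delta+\frac{\delta^2}{2}$. The case $\delta=1$ is handled directly: $\mathbb P(X\le0)=(1-p)^n\le e^{-\mu}\le e^{-\mu/2}$.

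The particular cases follow by substitution. Setting $\delta=\frac12$ in the first bound gives $e^{-\mu/8}$, and setting $\delta=1$ in the second gives $e^{-\mu/3}$. The only delicate step is the calculus inequality for $f$ on the upper tail, which the convexity analysis above settles.
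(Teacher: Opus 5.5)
Your argument is correct and complete. The bound $\mathbb E e^{\lambda X}\le\exp(\mu(e^{\lambda}-1))$, the choices $\lambda=\log(1\pm\delta)$, the convexity analysis of $f$ on $[0,1]$ (using $f'(1)=\tfrac23-\log 2<0$), the series bound for the lower tail, and the separate treatment of $\delta=1$ all check out. The paper does not prove this lemma; it cites Alon and Spencer, Appendix~A.1, where such tail bounds are derived by the same exponential-moment method (Markov's inequality applied to $e^{\lambda X}$), so your proposal is the standard argument behind that citation.
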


\section{Upper Bounds}\label{s:upper}

\subsection{Proof of Theorem~\ref{thm:main}}\label{ss:upper-avoiding}

Let $\mathcal B_m$ be the family of all connected bipartite graphs
with exactly $m$ edges, and fix bipartition classes $X_J$ and $Y_J$
for every $J\in\mathcal B_m$. Since every connected graph with at least $m$ edges contains a connected
subgraph with exactly $m$ edges, every $(2,\mathcal B_m)$-avoiding coloring
is $(2,\calF)$-avoiding.
We therefore construct a $(2,\mathcal B_m)$-avoiding coloring of $G$.

For $m=2$, we have $k(F)=0$, so $F$ is a tree. If $F=K_2$,
then $G$ is edgeless. Otherwise $G$ is $(|V(F)|-2)$-degenerate,
since every graph of minimum degree at least $|V(F)|-1$ contains $F$.
By \textbf{Lemma~\ref{lem:square-degenerate}},
$\chi(G^2)=O(\Delta)=O(\frac{\Delta^2}{\log\Delta})$.
A proper coloring of $G^2$ contains no bichromatic copy of the
unique member $P_3$ of $\mathcal B_2$, proving the assertion.

Assume now that $m\ge3$. Put $k=k(F)$ and choose bipartition classes
$A_F,B_F$ of $F$ together with a set
$I=\{x_1,\ldots,x_k\}\subseteq B_F$ such that $T=F-I$ is a forest. Define
\[
\varepsilon
=
\frac{m-k-1}{8m(m-1)^2},
\qquad
\theta
=
\frac{\varepsilon(m-1)}{2m}.
\]
Then $\theta\in(0,1)$. Put $\alpha_1=\Delta$ and, for
$2\le j\le m-1$, put
$\alpha_j=\Delta^{\frac{m-j}{m-1}-\varepsilon}$.
For every nonempty set $W\subseteq V(G)$, write
$N_G(W)=\bigcap_{w\in W}N_G(w)$.
For $2\le j\le m-1$, an independent $j$-set $S$ is called \emph{special}
if $|N_G(S)|>\alpha_j$ and $S$ contains no special set of smaller size.
A special set of size two is called a \emph{special pair}.

Fix $2\le j\le m-1$, and let
$U=\{u_1,\ldots,u_{j-1}\}\subseteq V(G)$ be independent and contain
no special set. Define
$\Sigma_j(U)=\{w\in V(G)\setminus U:U\cup\{w\}
\text{ is a special }j\text{-set}\}$, and put
$A=N_G(U)$ and $B=\Sigma_j(U)$.
We estimate $|\Sigma_j(U)|$. If $B=\emptyset$, there is nothing to prove,
so assume $B\ne\emptyset$. Then $A\ne\emptyset$, since
$|N_G(U\cup\{w\})|>\alpha_j$ for every $w\in B$.
For $j=2$, we have $|A|\le\Delta=\alpha_1$.
For $j\ge3$, the set $U$ is independent and contains no smaller special
set, so $|A|>\alpha_{j-1}$ would make $U$ special. Thus
$|A|\le\alpha_{j-1}$ in both cases.

The sets $U,A,B$ are pairwise disjoint. Indeed, $U\cap A=\emptyset$
because $G$ has no loops, and $U\cap B=\emptyset$ by definition.
Also, every vertex of $A$ is adjacent to every vertex of $U$, whereas
every vertex of $B$ is nonadjacent to every vertex of $U$, so
$A\cap B=\emptyset$. Let $L$ be the bipartite graph with classes $A,B$
and edge set $E(L)=\{ab\in E(G):a\in A,\ b\in B\}$.
For every $w\in B$, $d_L(w)=|N_G(U\cup\{w\})|>\alpha_j$, so
\begin{equation}\label{eq:special-edge-lower}
e(L)=\sum_{w\in B}d_L(w)>\alpha_j|B|.
\end{equation}
The construction is illustrated in \textbf{Figure~\ref{fig:special-bipartite}}.

Put $t=\min\{j-1,k\}$, $I_t=\{x_1,\ldots,x_t\}$,
$H_j=F-I_t$, $Z_j=I\setminus I_t$, and $r=k-t$, where
$I_0=\emptyset$. Then $H_j-Z_j=T$ is a forest.
There is no injective map $\phi:V(H_j)\to V(L)$ satisfying
$\phi(A_F)\subseteq A$, $\phi(B_F\setminus I_t)\subseteq B$, and
$\phi(x)\phi(y)\in E(L)$ for every $xy\in E(H_j)$.
Otherwise define $\psi:V(F)\to V(G)$ by
$\psi(x_i)=u_i$ for $1\le i\le t$ and
$\psi(v)=\phi(v)$ for $v\in V(H_j)$.
Since $U\cap(A\cup B)=\emptyset$, this map is injective.
The edges of $H_j$ are preserved by $\phi$.
Every remaining edge of $F$ joins a vertex $x_i\in I_t$ to a vertex
of $A_F$, and its image is an edge because every vertex of $U$ is
adjacent to every vertex of $A$. Thus $\psi$ embeds $F$ into $G$,
a contradiction.
Applying \textbf{Lemma~\ref{lem:one-sided-forest}} with
$H=H_j$, $X_H=A_F$, $Y_H=B_F\setminus I_t$, and $Z=Z_j$ gives
\begin{equation}\label{eq:special-edge-upper}
e(L)\le C(|A|+|B|\,|A|^{\frac{r}{r+1}}).
\end{equation}

\begin{figure}[htbp]
\centering
\begin{tikzpicture}[font=\small]
\tikzset{pt/.style={circle,fill=black,inner sep=1.5pt},
  lab/.style={fill=none,inner sep=2pt}}
\draw[rounded corners=5pt,black!55] (-4.1,-2.3) rectangle (4.1,1.45);
\filldraw[fill=black!4,thick] (-2.25,-.45) ellipse (1.6 and 1.65);
\filldraw[fill=black!4,thick] (2.25,-.45) ellipse (1.6 and 1.65);
\draw[thick] (0,2.75) ellipse (1.65 and .65);
\node[lab] at (0,3.65) {$U=\{u_1,\ldots,u_{j-1}\}$};
\node[pt,label=above:$u_1$] (u1) at (-.85,2.6) {};
\node at (0,2.6) {$\cdots$};
\node[pt,label=above:$u_{j-1}$] (u2) at (.85,2.6) {};
\node[pt] (a1) at (-2.85,.35) {};
\node[pt] (a2) at (-1.6,-.05) {};
\node[pt] (a3) at (-2.5,-1.1) {};
\node[pt,label=above:$w$] (b1) at (1.65,.35) {};
\node[pt] (b2) at (2.95,-.15) {};
\node[pt] (b3) at (2.1,-1.1) {};
\foreach \u in {u1,u2}{
  \foreach \a in {a1,a2,a3}{
    \draw[dashed,black!65] (\u)--(\a);
  }
}
\draw[semithick] (a1)--(b1);
\draw[semithick] (a2)--(b1);
\draw[semithick] (a3)--(b1);
\draw[semithick] (a1)--(b2);
\draw[semithick] (a2)--(b3);
\draw[semithick] (a3)--(b3);
\node[lab] at (-2.25,-1.6) {$A=N_G(U)$};
\node[lab] at (2.25,-1.6) {$B=\Sigma_j(U)$};
\node[lab] at (0,-2.6) {$L$};
\end{tikzpicture}
\caption{The graph $L$ consists of the edges of $G$ between $A$ and $B$.
All edges between $U$ and $A$ are present, and no edge joins $U$ to $B$.
For $w\in B$, $N_L(w)=N_G(U\cup\{w\})$.}
\label{fig:special-bipartite}
\end{figure}

If $j=2$ and $k\ge1$, then $r=k-1$. Since
$\frac{m-k-1}{k(m-1)}-\varepsilon>0$, we have
$\frac{\alpha_2}{\alpha_1^{\frac{r}{r+1}}}
=\Delta^{\frac{m-k-1}{k(m-1)}-\varepsilon}\to\infty$.
If $j\ge3$ and $j-1\le k$, then $r=k-j+1$. Since
$\frac{m-k-1}{m-1}-\varepsilon>0$, we obtain
$
\frac{\alpha_j}{\alpha_{j-1}^{\frac{r}{r+1}}}
=
\Delta^{\frac{1}{k-j+2}
(\frac{m-k-1}{m-1}-\varepsilon)}
\longrightarrow\infty.
$
If $j-1>k$, then $r=0$ and
$\frac{m-j}{m-1}-\varepsilon
\ge\frac{1}{m-1}-\varepsilon>0$, so
$\frac{\alpha_j}{\alpha_{j-1}^{\frac{r}{r+1}}}=\alpha_j\to\infty$.
Consequently, for sufficiently large $\Delta$,
$C|A|^{\frac{r}{r+1}}
\le C\alpha_{j-1}^{\frac{r}{r+1}}\le\frac{\alpha_j}{2}$.
Substituting this inequality and $|A|\le\alpha_{j-1}$ into
\eqref{eq:special-edge-lower} and \eqref{eq:special-edge-upper} yields
$\alpha_j|B|<e(L)\le
C\alpha_{j-1}+\frac{\alpha_j}{2}|B|$.
After increasing $C$, we obtain
\begin{equation}\label{eq:special-extension-bound}
|\Sigma_j(U)|\le C\frac{\alpha_{j-1}}{\alpha_j}.
\end{equation}

We use \eqref{eq:special-extension-bound} to count special sets
containing prescribed vertices.
For $W\subseteq V(G)$, write
$\mathcal S_j(W)=\{S\subseteq V(G):S\text{ is a special }j\text{-set},
\ W\subseteq S\}$.
Suppose first that $W$ is independent and contains no special set, with
$1\le b=|W|\le j-2$, and put $a=j-b\ge2$.
Define
$\mathcal P_j(W)=\{(S,z):S\in\mathcal S_j(W),\ z\in N_G(S)\}$.
Every $S\in\mathcal S_j(W)$ has more than $\alpha_j$ common neighbors,
so $\alpha_j|\mathcal S_j(W)|\le|\mathcal P_j(W)|$.

To bound $|\mathcal P_j(W)|$, consider triples $(S,z,w)$ with
$(S,z)\in\mathcal P_j(W)$ and $w\in S\setminus W$.
There are exactly $a|\mathcal P_j(W)|$ such triples.
For each triple put $R=S\setminus(W\cup\{w\})$.
The map $(S,z,w)\mapsto(z,R,w)$ is injective, since
$S=W\cup R\cup\{w\}$.
Its image satisfies $z\in N_G(W)$,
$R\in\binom{N_G(z)\setminus W}{a-1}$, and
$w\in\Sigma_j(W\cup R)\cap N_G(z)$.
Moreover, $W\cup R$ is an independent $(j-1)$-set containing no special
set, since it is a proper subset of $S$.
There are at most $|N_G(W)|$ choices for $z$, at most
$\Delta^{a-1}$ choices for $R$, and at most
$C\frac{\alpha_{j-1}}{\alpha_j}$ choices for $w$ by
\eqref{eq:special-extension-bound}. Hence
\[
a\alpha_j|\mathcal S_j(W)|
\le a|\mathcal P_j(W)|
\le C|N_G(W)|\Delta^{a-1}
\frac{\alpha_{j-1}}{\alpha_j}.
\]
Using $a\ge1$ gives
\begin{equation}\label{eq:special-incidence-bound}
|\mathcal S_j(W)|
\le C|N_G(W)|\Delta^{a-1}
\frac{\alpha_{j-1}}{\alpha_j^2}.
\end{equation}

Now fix $u\in V(G)$. For $j=2$,
\eqref{eq:special-extension-bound} gives
$|\mathcal S_2(\{u\})|=|\Sigma_2(\{u\})|
\le C\Delta^{\frac{1}{m-1}+\varepsilon}
\le C\Delta^{1+\varepsilon}$.
For $3\le j\le m-1$, apply \eqref{eq:special-incidence-bound}
with $W=\{u\}$ and $a=j-1$. Since $|N_G(u)|\le\Delta$ and
$j+1-m\le0$,
\[
|\mathcal S_j(\{u\})|
\le C\Delta^{j-1}\frac{\alpha_{j-1}}{\alpha_j^2}
=
C\Delta^{j-1+\frac{j+1-m}{m-1}+\varepsilon}
\le C\Delta^{j-1+\varepsilon}.
\]
Thus
\begin{equation}\label{eq:special-one-vertex}
|\mathcal S_j(\{u\})|\le C\Delta^{j-1+\varepsilon}
\qquad(2\le j\le m-1).
\end{equation}

Finally, let $U$ be an $\ell$-set with
$2\le\ell<j\le m-1$, and put $a=j-\ell$.
If $U$ contains an edge or a special set, then
$\mathcal S_j(U)=\emptyset$.
Otherwise $U$ is independent and contains no special set, so
$|N_G(U)|\le\alpha_\ell$.
If $a=1$, then $j-1=\ell\ge2$, and
\eqref{eq:special-extension-bound} gives
$|\mathcal S_j(U)|=|\Sigma_j(U)|
\le C\frac{\alpha_{j-1}}{\alpha_j}
=C\Delta^{\frac{1}{m-1}}$.
If $a\ge2$, then $a=j-\ell\le m-3$, so
\eqref{eq:special-incidence-bound} yields
\[
|\mathcal S_j(U)|
\le C\alpha_\ell\Delta^{a-1}
\frac{\alpha_{j-1}}{\alpha_j^2}
=
C\Delta^{a-1+\frac{a+1}{m-1}}
\le C\Delta^{a-\frac{1}{m-1}}.
\]
Consequently
\begin{equation}\label{eq:special-fixed-set}
|\mathcal S_j(U)|
\le
\begin{cases}
C\Delta^{\frac{1}{m-1}},&a=1,\\[1mm]
C\Delta^{a-\frac{1}{m-1}},&a\ge2.
\end{cases}
\end{equation}

Let $q$ be a positive integer. Define a bipartite graph
$\mathcal A_q$ with vertex classes
$P=\{p_v:v\in V(G)\}$ and
$Q=\{z_{v,c}:v\in V(G),\ c\in[q]\}$, and call
$e_{v,c}=\{p_v,z_{v,c}\}$ the assignment edge for $v$ and $c$.
Thus $\mathcal A_q$ is the disjoint union of a $q$-edge star centered at
$p_v$ for every $v\in V(G)$. A $P$-perfect matching selects exactly one
edge $e_{v,\varphi(v)}$ at each star and therefore determines a coloring
$\varphi:V(G)\to[q]$.

Define a configuration hypergraph $\mathcal C_q$ on $E(\mathcal A_q)$ as
follows.
\begin{enumerate}
\item For every $uv\in E(G)$ and $c\in[q]$, include
$\{e_{u,c},e_{v,c}\}$.
\item For every special set $S$ and $c\in[q]$, include
$\{e_{u,c}:u\in S\}$.
\item For every $v\in V(G)$, $c\in[q]$, and
$S\in\binom{N_G(v)}m$, include $\{e_{u,c}:u\in S\}$ if $S$ is independent
and contains no special set.
\item For every non-star $J\in\mathcal B_m$, every embedding
$\phi:J\to G$, and every ordered pair of distinct colors $c,d\in[q]$,
include
\[
\{e_{\phi(x),c}:x\in X_J\}
\cup
\{e_{\phi(y),d}:y\in Y_J\},
\]
provided that both $\phi(X_J)$ and $\phi(Y_J)$ are independent and contain
no special set.
\end{enumerate}
Every configuration is a matching of $\mathcal A_q$.

Let $M$ be a $\mathcal C_q$-avoiding $P$-perfect matching, and let
$\varphi$ be the coloring determined by $M$. Configurations of type~1 imply
that $\varphi$ is proper. Suppose that $\varphi$ contains a bichromatic copy
of some $J\in\mathcal B_m$. Since $J$ is connected and $\varphi$ is proper,
the two colors are constant on the two bipartition classes and are
distinct. If $J=K_{1,m}$, then its leaf class is independent. A special
subset of the leaf class gives a configuration of type~2; if there is no
such subset, the $m$ leaves give a configuration of type~3. If $J$ is not
a star, then both bipartition classes of the copy are independent. If
either contains a special set, type~2 occurs; otherwise the whole copy gives
a configuration of type~4. Each possibility contradicts
$\mathcal C_q$-avoidance. Hence a $\mathcal C_q$-avoiding $P$-perfect
matching gives the required coloring.

We next count the embeddings that define configurations of type~4.
Let $J\in\mathcal B_m$ be a non-star with $i$ vertices, let $U_J$
be an $\ell$-set contained in one bipartition class of $J$, where
$1\le\ell<i$, and prescribe distinct images for the vertices of $U_J$.
We count only embeddings $\phi:J\to G$ extending this prescription
for which both $\phi(X_J)$ and $\phi(Y_J)$ are independent and contain
no special set. Put $a=i-\ell$. Since
$J$ is connected, the vertices of $V(J)\setminus U_J$ can be ordered as
$w_1,\ldots,w_a$ so that every $w_j$ has a neighbor in
$U_J\cup\{w_1,\ldots,w_{j-1}\}$. Define
$b_j = \left|N_J(w_j)\cap \bigl(U_J\cup\{w_1,\ldots,w_{j-1}\}\bigr)\right|$.
Every edge of $J$ is counted exactly once by its later endpoint, because
$U_J$ lies in one bipartition class. Consequently
$b_j\ge1$, $\sum_{j=1}^ab_j=m$, and
$\sum_{j=1}^a(b_j-1)=m-a$.

If $b_j=1$, the image of $w_j$ has at most $\Delta$ choices. If $b_j\ge2$,
its already embedded neighbors lie in one bipartition class of the copy,
and their images form an independent set containing no special set. Since $J$ is not a star,
$b_j\le m-1$, and the number of common neighbors of those images is at
most $\alpha_{b_j}$. If $h=|\{j:b_j\ge2\}|$, the number of extensions is
therefore at most
\begin{equation}\label{eq:embedding-avoiding}
\Delta^{a-h}
\prod_{b_j\ge2}\alpha_{b_j}
=
\Delta^{\frac{m(a-1)}{m-1}-\varepsilon h}.
\end{equation}
If $\ell=1$ and $J$ contains a cycle, then $m-a=m-i+1\ge1$, so $h\ge1$
and the number of extensions is at most
$\Delta^{\frac{m(i-2)}{m-1}-\varepsilon}$. If $\ell\ge2$, then
$m-a=m-i+\ell\ge\ell-1\ge1$, and hence the number is at most
$\Delta^{\frac{m(i-\ell-1)}{m-1}-\varepsilon}$. When $i-\ell=1$ and all
prescribed positions lie in one bipartition class, $J$ would be a star, so
this case does not occur. Finally, if $\ell=1$ and $J$ is a tree, then
$i=m+1$; rooting at the prescribed vertex gives the simpler upper bound
$\Delta^m$.

Fix an assignment edge $e_{u,c}$. We estimate the number of
$i$-configurations containing $e_{u,c}$ for each possible range of $i$.

If $i=2$, only types~1 and~2 occur. Type~1 contributes
$d_G(u)\le\Delta$. Since $\frac{1}{m-1}+\varepsilon<1$,
\eqref{eq:special-extension-bound} bounds the type~2 contribution by
$C\frac{\alpha_1}{\alpha_2}
=C\Delta^{\frac{1}{m-1}+\varepsilon}\le C\Delta$.

If $3\le i\le m-1$, only types~2 and~4 occur.
By \eqref{eq:special-one-vertex}, type~2 contributes at most
$C\Delta^{i-1+\varepsilon}$.
For type~4, fix $J\in\mathcal B_m$ with $|V(J)|=i$ and a vertex
of $J$ whose image is $u$. There are $O(1)$ choices for these data.
Since $J$ is connected and $i\le m$, it contains a cycle.
The color on the bipartition class containing the prescribed vertex
is $c$, and the other color has at most $q$ choices.
By \eqref{eq:embedding-avoiding}, the number of these configurations
is $O(q\Delta^{\frac{m(i-2)}{m-1}-\varepsilon})$.

If $i=m$, only types~3 and~4 occur. For type~3, its center is a
vertex of $N_G(u)$, and its other $m-1$ vertices lie in the
neighborhood of that center. Thus there are at most
$\Delta\cdot\Delta^{m-1}=\Delta^m$ choices.
For type~4, applying \eqref{eq:embedding-avoiding} with $i=m$ and
$\ell=1$, and allowing at most $q$ choices for the second color, gives at most
$O(q\Delta^{\frac{m(m-2)}{m-1}-\varepsilon})$ further configurations.

If $i=m+1$, only type~4 occurs, and the corresponding graph $J$
is a tree. After fixing $J$ and the position whose image is $u$,
root $J$ at that position and embed each remaining vertex after
its parent. Each image has at most $\Delta$ choices, giving at
most $\Delta^m$ embeddings. There are at most $q$ choices for the
second color, so the contribution is $O(q\Delta^m)$.

Taking a sufficiently large constant $A=A(m,F)$ gives
\begin{itemize}
    \item \refstepcounter{equation}\label{eq:avoiding-degree-two}
    $\Delta_2(\mathcal C_q)\le A\Delta$;\nobreak\hfill\mbox{\normalfont(\theequation)}
    \item \refstepcounter{equation}\label{eq:avoiding-degree-middle}
    $\Delta_i(\mathcal C_q)
    \le A\left(\Delta^{i-1+\varepsilon}
    +q\Delta^{\frac{m(i-2)}{m-1}-\varepsilon}\right)$
    for $3\le i\le m-1$;\nobreak\hfill\mbox{\normalfont(\theequation)}
    \item \refstepcounter{equation}\label{eq:avoiding-degree-m}
    $\Delta_m(\mathcal C_q)
    \le A\left(\Delta^m
    +q\Delta^{\frac{m(m-2)}{m-1}-\varepsilon}\right)$;\nobreak\hfill\mbox{\normalfont(\theequation)}
    \item \refstepcounter{equation}\label{eq:avoiding-degree-top}
    $\Delta_{m+1}(\mathcal C_q)\le Aq\Delta^m$.\nobreak\hfill\mbox{\normalfont(\theequation)}
\end{itemize}
When $m=3$, the range $3\le i\le m-1$ is empty.
Also, no type~4 configuration has size three, since a non-star
connected bipartite graph has at least four vertices.

We next estimate $\Delta_{i,\ell}(\mathcal C_q)$.
Fix $2\le\ell<i\le m+1$, an $\ell$-set $\mathcal S$ of assignment
edges, and put $a=i-\ell$.
If $\mathcal S$ is not a matching in $\mathcal A_q$, it is contained
in no configuration. Otherwise write
$\mathcal S=\{e_{u_1,c_1},\ldots,e_{u_\ell,c_\ell}\}$, where
$u_1,\ldots,u_\ell$ are distinct, and put
$U=\{u_1,\ldots,u_\ell\}$.
Since $i\ge3$, type~1 contributes nothing.

Type~2 can occur only when $i\le m-1$ and
$c_1=\cdots=c_\ell$.
Its contribution is $|\mathcal S_i(U)|$.
By \eqref{eq:special-fixed-set}, this is at most
$C\Delta^{\frac{1}{m-1}}$ if $a=1$ and at most
$C\Delta^{a-\frac{1}{m-1}}$ if $a\ge2$.
In either case it is at most $C\Delta^a$.

Type~3 can occur only when $i=m$, all prescribed colors agree,
and $U$ is independent and contains no special set.
Since $2\le\ell\le m-1$, the possible centers form the set
$N_G(U)$ of size at most $\alpha_\ell$.
For a fixed center, the remaining $a=m-\ell$ vertices have at most
$\Delta^a$ choices in its neighborhood. Hence the contribution
is at most
$\alpha_\ell\Delta^a
=\Delta^{\frac{m-\ell}{m-1}-\varepsilon+a}
=\Delta^{\frac{ma}{m-1}-\varepsilon}$.

For type~4, fix a non-star $J\in\mathcal B_m$ with $i$ vertices
and distinct positions $v_1,\ldots,v_\ell\in V(J)$ with prescribed
images $\phi(v_s)=u_s$ and colors $c_s$.
There are $O(1)$ choices for these data.
Put $U_J=\{v_1,\ldots,v_\ell\}$.
If the prescribed colors are not constant on each of
$U_J\cap X_J$ and $U_J\cap Y_J$, or if both intersections are nonempty
and their colors agree, there is no such configuration.

Suppose that $U_J$ meets both bipartition classes and that the
prescription is compatible. Both colors are then fixed.
Starting with $W_0=U_J$, for $1\le s\le a$ choose an edge
$t_sw_s\in E(J)$ with $t_s\in W_{s-1}$ and
$w_s\notin W_{s-1}$, and put $W_s=W_{s-1}\cup\{w_s\}$.
Such an edge exists because $J$ is connected and $W_{s-1}$ is a
nonempty proper subset of $V(J)$.
When $\phi(w_s)$ is chosen, the image $\phi(t_s)$ is already fixed,
and $\phi(w_s)\in N_G(\phi(t_s))$ has at most $\Delta$ choices.
All other required adjacencies, injectivity, and restrictions on
special sets can only reduce this number.
Thus there are at most $\Delta^a$ extensions of the prescribed images.

Suppose instead that $U_J$ lies in one bipartition class.
The other color has at most $q$ choices.
Since $J$ is connected and is not a star, each bipartition class
has at least two vertices, so $a\ge2$.
The estimate \eqref{eq:embedding-avoiding}, with $\ell\ge2$, gives
at most $q\Delta^{\frac{m(a-1)}{m-1}-\varepsilon}$ extensions,
including the choice of the other color.
Combining the contributions from configuration types~2, 3, and~4,
and increasing $A$ if necessary gives
\begin{equation}\label{eq:avoiding-codegrees}
\Delta_{i,\ell}(\mathcal C_q)
\le
A\left(
\Delta^a
+
\begin{cases}
\Delta^{\frac{ma}{m-1}-\varepsilon},&i=m,\\
0,&i\ne m,
\end{cases}
+
\begin{cases}
q\Delta^{\frac{m(a-1)}{m-1}-\varepsilon},&a\ge2,\\
0,&a=1.
\end{cases}
\right).
\end{equation}

To estimate the parameters involving $2$-configurations, let $R$
be the simple graph on $V(G)$ with
$E(R)=E(G)\cup\{S:S\text{ is a special pair}\}$.
By \eqref{eq:special-extension-bound}, for every $v\in V(G)$,
$d_R(v)\le d_G(v)+C\frac{\alpha_1}{\alpha_2}
\le\Delta+C\Delta^{\frac{1}{m-1}+\varepsilon}$.
Since $\frac{1}{m-1}+\varepsilon<1$, it follows that
$\Delta(R)=O(\Delta)$.

For $e\in E(\mathcal A_q)$, write
$\mathcal N_2(e)=\{f\in E(\mathcal A_q):
\{e,f\}\in E_2(\mathcal C_q)\}$.
Only types~1 and~2 give $2$-configurations.
Their definitions therefore give
$\mathcal N_2(e_{v,c})=\{e_{w,c}:w\in N_R(v)\}$.

We first estimate $\codeg_2(\mathcal A_q,\mathcal C_q)$.
Fix $x\in V(\mathcal A_q)$ and $e_{v,c}\in E(\mathcal A_q)$
with $x\notin e_{v,c}$.
A $2$-configuration containing $e_{v,c}$ is uniquely determined
by its other assignment edge, so
\[
\codeg_2(\mathcal A_q,\mathcal C_q;x,e_{v,c})
=|\{f\in\mathcal N_2(e_{v,c}):x\in f\}|.
\]
If $x=p_u$, the assignment edges containing $x$ are exactly
$\{e_{u,d}:d\in[q]\}$.
Their intersection with $\mathcal N_2(e_{v,c})$ is
$\{e_{u,c}\}$ when $u\in N_R(v)$, and is empty otherwise.
If $x=z_{u,d}$, the only assignment edge containing $x$ is
$e_{u,d}$, which belongs to $\mathcal N_2(e_{v,c})$ precisely when
$d=c$ and $u\in N_R(v)$.
Thus the defining set has size at most one in both cases.
Hence
$\codeg_2(\mathcal A_q,\mathcal C_q)\le1$.

We next estimate $\cdeg_2(\mathcal C_q)$.
Fix distinct assignment edges $e_{u,c}$ and $e_{v,d}$.
Their common $2$-neighbors form the set
\[
\mathcal N_2(e_{u,c})\cap\mathcal N_2(e_{v,d})
=
\begin{cases}
\{e_{w,c}:w\in N_R(u)\cap N_R(v)\},&c=d,\\
\emptyset,&c\ne d.
\end{cases}
\]
If $c=d$, distinctness of the assignment edges implies $u\ne v$,
and this set has size
$|N_R(u)\cap N_R(v)|
\le\min\{d_R(u),d_R(v)\}\le\Delta(R)$.
If $c\ne d$, the set is empty. Thus
\begin{equation}\label{eq:avoiding-cdeg2}
\cdeg_2(\mathcal C_q)\le\Delta(R)=O(\Delta).
\end{equation}

We apply \textbf{Lemma~\ref{lem:DP}} with
$\theta=\frac{\varepsilon(m-1)}{2m}$. Here $\mathcal A_q$ is $2$-bounded and $\mathcal C_q$ is
$(m+1)$-bounded. Let $D_\theta$ and $\xi>0$ be the corresponding
constants. Choose
$K=K(m,F,\xi)$ sufficiently large and put
\[
D
=
\left\lceil
K\left(\frac{\Delta^m}{\log\Delta}\right)^{\frac{1}{m-1}}
\right\rceil,
\qquad
q=2D.
\]
For sufficiently large $\Delta$, we have $D\ge\max\{D_\theta,1\}$.
Every vertex of $P$ has degree $2D\ge(1+D^{-\xi})D$, and every
vertex of $Q$ has degree $1\le D$.
Since $\mathcal A_q$ is a simple graph,
$\codeg(\mathcal A_q)\le1\le D^{1-\theta}$.
Thus the required degree and pair-codegree conditions hold.

We verify the bounds on $\Delta_i(\mathcal C_q)$ using
\eqref{eq:avoiding-degree-two}--\eqref{eq:avoiding-degree-top}.
For fixed $K$, the definition of $D$ gives
$D=(K+o(1))\Delta^{\frac{m}{m-1}}
(\log\Delta)^{-\frac{1}{m-1}}$ and
$\log D=(\frac{m}{m-1}+o(1))\log\Delta$.
In particular, $\frac{\Delta}{D\log D}\to0$, so
\eqref{eq:avoiding-degree-two} gives the required bound for $i=2$.
For the first term in \eqref{eq:avoiding-degree-middle}, since
$\varepsilon<\frac{i-1}{m-1}$ for $3\le i\le m-1$,
\begin{equation}\label{eq:avoiding-degree-ratio-middle}
\frac{\Delta^{i-1+\varepsilon}}{D^{i-1}\log D}
=
O\!\left(
\Delta^{\varepsilon-\frac{i-1}{m-1}}
(\log\Delta)^{\frac{i-1}{m-1}-1}
\right)
=o(1).
\end{equation}
For the terms involving $q$ in \eqref{eq:avoiding-degree-middle} and
\eqref{eq:avoiding-degree-m}, using $q=2D$ gives, for $3\le i\le m$,
\begin{equation}\label{eq:avoiding-degree-ratio-q}
\frac{q\Delta^{\frac{m(i-2)}{m-1}-\varepsilon}}
{D^{i-1}\log D}
=
O\!\left(
\Delta^{-\varepsilon}
(\log\Delta)^{\frac{i-2}{m-1}-1}
\right)
=o(1).
\end{equation}
The remaining terms in \eqref{eq:avoiding-degree-m} and
\eqref{eq:avoiding-degree-top} satisfy
\begin{equation}\label{eq:avoiding-degree-ratio-top}
\frac{\Delta^m}{D^{m-1}\log D}
\longrightarrow\frac{m-1}{mK^{m-1}},
\qquad
\frac{q\Delta^m}{D^m\log D}
\longrightarrow\frac{2(m-1)}{mK^{m-1}}.
\end{equation}
Choose $K$ so large that
$\frac{2A(m-1)}{mK^{m-1}}<\frac{\xi}{2}$.
For $i=2$, \eqref{eq:avoiding-degree-two} and
$\frac{\Delta}{D\log D}\to0$ apply; for $3\le i\le m+1$,
\eqref{eq:avoiding-degree-middle}--\eqref{eq:avoiding-degree-top} and
\eqref{eq:avoiding-degree-ratio-middle}, \eqref{eq:avoiding-degree-ratio-q}, and
\eqref{eq:avoiding-degree-ratio-top}
give the required bounds. Thus
$\Delta_i(\mathcal C_q)\le\xi D^{i-1}\log D$
for every $2\le i\le m+1$, once $\Delta$ is sufficiently large.

Now fix $2\le\ell<i\le m+1$ and put $a=i-\ell$.
Since $a\ge1>m\theta$ and
$\varepsilon-\frac{m\theta}{m-1}=\frac{\varepsilon}{2}$,
the terms in \eqref{eq:avoiding-codegrees} satisfy
\begin{itemize}
    \item \refstepcounter{equation}\label{eq:avoiding-codegree-ratio-ordinary}
    $\frac{\Delta^a}{D^{a-\theta}}
    =
    O\!\left(
    \Delta^{-\frac{a-m\theta}{m-1}}
    (\log\Delta)^{\frac{a-\theta}{m-1}}
    \right)
    =o(1)$;\nobreak\hfill\mbox{\normalfont(\theequation)}
    \item \refstepcounter{equation}\label{eq:avoiding-codegree-ratio-star}
    $\frac{\Delta^{\frac{ma}{m-1}-\varepsilon}}{D^{a-\theta}}
    =
    O\!\left(
    \Delta^{-\frac{\varepsilon}{2}}
    (\log\Delta)^{\frac{a-\theta}{m-1}}
    \right)
    =o(1)$;\nobreak\hfill\mbox{\normalfont(\theequation)}
    \item \refstepcounter{equation}\label{eq:avoiding-codegree-ratio-one-side}
    $\frac{q\Delta^{\frac{m(a-1)}{m-1}-\varepsilon}}{D^{a-\theta}}
    =
    O\!\left(
    \Delta^{-\frac{\varepsilon}{2}}
    (\log\Delta)^{\frac{a-1-\theta}{m-1}}
    \right)
    =o(1)$ for $a\ge2$.\nobreak\hfill\mbox{\normalfont(\theequation)}
\end{itemize}
By \eqref{eq:avoiding-codegrees} and
\eqref{eq:avoiding-codegree-ratio-ordinary},
\eqref{eq:avoiding-codegree-ratio-star}, and
\eqref{eq:avoiding-codegree-ratio-one-side},
$\Delta_{i,\ell}(\mathcal C_q)\le D^{i-\ell-\theta}$
for every $2\le\ell<i\le m+1$ and sufficiently large $\Delta$.

Finally, $\codeg_2(\mathcal A_q,\mathcal C_q)\le1\le D^{1-\theta}$.
Taking $a=1$ in \eqref{eq:avoiding-codegree-ratio-ordinary} gives
$\Delta=o(D^{1-\theta})$, so
\eqref{eq:avoiding-cdeg2} implies
$\cdeg_2(\mathcal C_q)\le D^{1-\theta}$ for sufficiently large $\Delta$.
All hypotheses of \textbf{Lemma~\ref{lem:DP}} are now satisfied.
It gives a $\mathcal C_q$-avoiding $P$-perfect matching, and hence
the required coloring with
$q=O((\frac{\Delta^m}{\log\Delta})^{\frac{1}{m-1}})$ colors.

\subsection{Proof of Theorem~\ref{thm:B-k}}\label{ss:upper-B}

We first prove the middle case, where $s\ge3$ and $k\le1$.
Let $\Gamma$ be the graph with vertex set $E(G)$ in which distinct
$e,f\in E(G)$ are adjacent if $e\cap f\ne\emptyset$ or
$f\in\Omega(e)$.
A proper vertex-coloring of $\Gamma$ is exactly a B-coloring of $G$.
For every $e\in E(G)$, \textbf{Lemma~\ref{lem:B-local-k}} gives
$d_\Gamma(e)\le2(\Delta-1)+|\Omega(e)|
\le(4h-2)(\Delta-1)$.
A greedy coloring of $\Gamma$ therefore gives
$q_B(G)\le(4h-2)(\Delta-1)+1$, as required.

For the remaining two cases, we use a common auxiliary construction.
For a positive integer $q$, put
\[
P=\{p_e:e\in E(G)\},
\qquad
Q=\{z_{v,c}:v\in V(G),\ c\in[q]\},
\]
where the displayed symbols denote distinct vertices.
Define the hypergraph $\mathcal A_q$ by
$V(\mathcal A_q)=P\cup Q$, $P\cap Q=\emptyset$, and
$E(\mathcal A_q)=\{a_{e,c}:e\in E(G),\ c\in[q]\}$, where
$a_{e,c}=\{p_e,z_{u,c},z_{v,c}\}$ for $e=uv\in E(G)$.
We call $a_{e,c}$ the assignment edge for $e,c$.
Each edge of $\mathcal A_q$ contains one vertex of $P$ and two vertices
of $Q$. Thus $\mathcal A_q$ is a bipartite $3$-uniform hypergraph
with parts $P,Q$.

A $P$-perfect matching $M$ contains, for each $e\in E(G)$, exactly
one edge $a_{e,c}$; assign color $c$ to $e$.
If distinct edges $e=uv$ and $f=vw$ of $G$ received the same color
$c$, then $a_{e,c},a_{f,c}\in M$ would satisfy
$a_{e,c}\cap a_{f,c}=\{z_{v,c}\}\ne\emptyset$, contradicting that
$M$ is a matching. Thus the resulting edge-coloring of $G$ is proper.

Define a configuration hypergraph $\mathcal C_q$ with vertex set
$E(\mathcal A_q)$ and edge set
$\{\{a_{e,c},a_{f,c}\}:e,f\in E(G),\ f\in\Omega(e),\ c\in[q]\}$.
Its edges have size two.
If $f\in\Omega(e)$, then $e\cap f=\emptyset$, so
$a_{e,c}\cap a_{f,c}=\emptyset$.
Hence every configuration is a matching in $\mathcal A_q$.
For a $P$-perfect matching $M$, avoiding these configurations means
that no two opposite edges of a $4$-cycle in $G$ receive the same color.
Together with properness, this is precisely the condition for a
B-coloring. Conversely, a B-coloring $\varphi:E(G)\to[q]$ gives
the $\mathcal C_q$-avoiding $P$-perfect matching
$\{a_{e,\varphi(e)}:e\in E(G)\}$.

No edge of $\mathcal A_q$ contains two vertices of $P$.
A pair $\{p_e,z_{v,c}\}$ is contained only in $a_{e,c}$ when
$v\in e$, and in no edge otherwise.
Two distinct vertices $z_{u,c},z_{v,d}\in Q$ lie in an edge
exactly when $c=d$ and $uv\in E(G)$, in which case that edge is
$a_{uv,c}$. Consequently $\codeg(\mathcal A_q)\le1$.
Also, $d_{\mathcal A_q}(p_e)=q$ and
$d_{\mathcal A_q}(z_{v,c})=d_G(v)\le\Delta$. Moreover,
\begin{equation}\label{eq:B-configuration-degree}
\Delta_2(\mathcal C_q)=\max\limits_{e\in E(G)}|\Omega(e)|.
\end{equation}

We next estimate $\codeg_2(\mathcal A_q,\mathcal C_q)$.
Fix $x\in V(\mathcal A_q)$ and $a_{e,c}\in E(\mathcal A_q)$
with $x\notin a_{e,c}$.
If $x=p_f$, the only edge of $\mathcal A_q$ containing $x$
that can form a configuration with $a_{e,c}$ is $a_{f,c}$,
and it does so exactly when $f\in\Omega(e)$.
If $x=z_{v,d}$, such an edge must have color $d=c$ and must
be $a_{f,c}$ for some $f\in\Omega(e)$ with $v\in f$.
There are $\rho(e,v)$ such edges when $d=c$, and none when $d\ne c$.
Thus
\begin{equation}\label{eq:B-assignment-codegree}
\codeg_2(\mathcal A_q,\mathcal C_q)
\le\max\left\{1,\max\limits_{e\in E(G),\,v\in V(G)}\rho(e,v)\right\}.
\end{equation}

For distinct assignment edges $a_{e,c}$ and $a_{f,d}$, a common
$2$-neighbor must have both colors $c$ and $d$, so none exists
when $c\ne d$. If $c=d$, then $e\ne f$, and their common
$2$-neighbors are exactly
$\{a_{g,c}:g\in\Omega(e)\cap\Omega(f)\}$.
Thus
\begin{equation}\label{eq:B-assignment-common-degree}
\cdeg_2(\mathcal C_q)
\le\max\bigl(\{0\}\cup
\{|\Omega(e)\cap\Omega(f)|:e,f\in E(G),\ e\ne f\}\bigr).
\end{equation}

Suppose first that $s\le2$, and put
$t=\max\{2,|X_F|,|Y_F|\}$.
Since $F\subseteq K_{2,t}$, the graph $G$ is $K_{2,t}$-free.
By \textbf{Lemma~\ref{lem:B-local-k}},
$|\Omega(e)|\le(t-1)\Delta$ and $\rho(e,v)\le2(t-1)$.
Since $t\ge2$, \textbf{Lemma~\ref{lem:B-local-k}} and
\eqref{eq:B-configuration-degree}, \eqref{eq:B-assignment-codegree}, and
\eqref{eq:B-assignment-common-degree} yield
\[
\Delta_2(\mathcal C_q)\le(t-1)\Delta,\qquad
\codeg_2(\mathcal A_q,\mathcal C_q)\le2(t-1),\qquad
\cdeg_2(\mathcal C_q)\le4(t-1)^2.
\]
We apply \textbf{Lemma~\ref{lem:DP}} with $r=3$, $g=2$, and
$\theta=\frac{1}{2}$.
Since $\mathcal C_q$ is $2$-uniform, the configuration-degree
condition only needs to be checked for $i=2$, and there are no
integers satisfying $2\le\ell<i\le2$ for the higher
configuration-codegree conditions.
Let $D_{\frac{1}{2}}$ and $\xi>0$ be the constants supplied by the lemma.
Set $\eta=\min\{\xi,\frac{1}{2}\}$, $D=\Delta$, and
$q=\lceil\Delta+\Delta^{1-\eta}\rceil$.
Every vertex of $P$ has degree $q\ge(1+D^{-\xi})D$, and every
vertex of $Q$ has degree at most $D$.
For sufficiently large $\Delta$, we have $D\ge D_{\frac{1}{2}}$,
$\Delta_2(\mathcal C_q)\le\xi D\log D$,
$\codeg_2(\mathcal A_q,\mathcal C_q)\le D^{\frac{1}{2}}$,
$\cdeg_2(\mathcal C_q)\le D^{\frac{1}{2}}$, and
$\codeg(\mathcal A_q)\le1\le D^{\frac{1}{2}}$.
Thus \textbf{Lemma~\ref{lem:DP}} gives a
$\mathcal C_q$-avoiding $P$-perfect matching, and hence
$q_B(G)\le q\le\Delta+\Delta^{1-\eta}+1$.

It remains to consider $k\ge2$. Put
\[
\lambda_k=2-\frac{1}{k},
\qquad
\mu_k=2-\frac{1}{k-1}.
\]
Then $1\le\mu_k<\lambda_k$.
Choose $\theta=\frac{\lambda_k-\mu_k}{2\lambda_k}\in(0,1)$.
By \textbf{Lemma~\ref{lem:B-local-k}},
$\rho(e,v)\le2(\Delta-1)$.
For $\Delta\ge2$, \textbf{Lemma~\ref{lem:B-local-k}} and
\eqref{eq:B-assignment-codegree} and \eqref{eq:B-assignment-common-degree}
therefore give
\begin{equation}\label{eq:B-parameter-bounds}
\codeg_2(\mathcal A_q,\mathcal C_q)\le2(\Delta-1),\qquad
\cdeg_2(\mathcal C_q)\le C\Delta^{\mu_k},
\end{equation}
where $C$ depends only on $F$.

Apply \textbf{Lemma~\ref{lem:DP}} with $r=3$, $g=2$, and this
value of $\theta$, and let $D_\theta$ and $\xi>0$ be its constants.
Choose $K=K(F,\xi)$ sufficiently large and put
\[
L=\frac{\Delta^{\lambda_k}}{\log\Delta},
\qquad
D=KL,
\qquad
q=\left\lceil(1+D^{-\xi})D\right\rceil.
\]
Since $\lambda_k>1$, we have $\frac{D}{\Delta}\to\infty$.
Thus $D\ge D_\theta$ and every vertex of $Q$ has degree at most $D$
for sufficiently large $\Delta$.
Every vertex of $P$ has degree $q\ge(1+D^{-\xi})D$.
By \textbf{Lemma~\ref{lem:B-local-k}},
$\Delta_2(\mathcal C_q)\le C\Delta^{\lambda_k}$.
Since $\log D=\Theta(\log\Delta)$, choosing $K$ sufficiently
large gives $\Delta_2(\mathcal C_q)\le\xi D\log D$.
The hypergraph $\mathcal C_q$ is $2$-uniform, so this is the only
configuration-degree condition, and no higher
configuration-codegree condition is required.
Also,
$\lambda_k(1-\theta)=\frac{\lambda_k+\mu_k}{2}>\mu_k\ge1$,
so \eqref{eq:B-parameter-bounds} gives
\[
\codeg_2(\mathcal A_q,\mathcal C_q)\le D^{1-\theta},
\qquad
\cdeg_2(\mathcal C_q)\le D^{1-\theta}
\]
for sufficiently large $\Delta$.
Finally, $\codeg(\mathcal A_q)\le1\le D^{1-\theta}$.
All hypotheses of \textbf{Lemma~\ref{lem:DP}} therefore hold.

The resulting $\mathcal C_q$-avoiding $P$-perfect matching gives a
B-coloring with at most $q$ colors. Since $D=KL$ and
$D^{1-\xi}=o(L)$, for sufficiently large $\Delta$ we have
\[
q
\le
D+D^{1-\xi}+1
\le
(K+2)L.
\]
Absorbing the fixed constants into $C$ yields
$q_B(G)\le C\frac{\Delta^{2-\frac{1}{k}}}{\log\Delta}$, as required.

\section{Lower Bounds}\label{s:lower}

For a positive integer $n$ and $p\in[0,1]$, let $\mathbb G(n,p)$
denote the binomial random graph on vertex set $[n]$, in which each
pair of distinct vertices is an edge independently with probability $p$.
For positive integers $a,b$, let $\mathbb G_{a,b}(p)$ denote the
binomial random bipartite graph with two fixed vertex classes of sizes
$a,b$, in which each pair with one vertex in each class is an edge
independently with probability $p$.

\subsection{Proof of Theorem~\ref{thm:lower-avoiding}}\label{ss:lower-avoiding}

\begin{lemma}\label{lem:random-tree-free}
Let $H$ be a fixed tree with $m\ge2$ edges.
There exist an integer $a_H\ge1$ and constants $\rho_H,C>0$ such that,
for every integer $a\ge a_H$ and every $p\in[0,1]$ with $ap\le\rho_H$,
\[
\mathbb P(\mathbb G_{a,a}(p)\text{ is }H\text{-free})
\le\exp(-Ca^{m+1}p^m).
\]
\end{lemma}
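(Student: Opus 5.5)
We apply Janson's inequality (Lemma~\ref{lem:janson}) with $\Omega$ the set of all $a^2$ pairs between the two classes of $\mathbb G_{a,a}(p)$. Take the family $\{A_i\}$ to be the edge sets of all copies of $H$ in the complete bipartite graph $K_{a,a}$. Then $\mathbb G_{a,a}(p)$ is $H$-free exactly when no $A_i$ is contained in the random edge set. Each $A_i$ has exactly $m$ elements, because $H$ is a tree with $m$ edges.

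First bound $\mu$ from below. Fix a bipartition $X_H,Y_H$ of $H$. Every injective map sending $X_H$ into one class and $Y_H$ into the other yields a copy of $H$. There are at least $(a)_{|X_H|}(a)_{|Y_H|}\ge c_1a^{m+1}$ such maps for $a\ge a_H$, since $|V(H)|=m+1$. Each copy arises from at most $2|\mathrm{Aut}(H)|=O(1)$ maps. Hence $\mu\ge c_2a^{m+1}p^m$ with $c_2>0$ depending only on $H$.

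Next bound $\Gamma$ by classifying ordered pairs of distinct, edge-intersecting copies by their intersection. Fix the edge set $J$ of the union of two such copies, with $v_J$ vertices and $e_J$ edges. $J$ is connected, since the copies share an edge, so $e_J\ge v_J-1$. Write $s=|A_i\cap A_j|$ with $1\le s\le m-1$. The shared edges form a forest with $c\ge1$ components and at least $s+c$ vertices, so
\[
v_J\le 2(m+1)-(s+c)\le 2m+1-s,\qquad e_J=2m-s.
\]
The number of pairs of a given type is $O(a^{v_J})$. Therefore
\[
\Gamma\le\sum_{s=1}^{m-1}O\bigl(a^{2m+1-s}p^{2m-s}\bigr)=a^{m+1}p^m\sum_{s=1}^{m-1}O\bigl((ap)^{m-s}\bigr)=O(\rho_H\,a^{m+1}p^m),
\]
using $ap\le\rho_H\le1$ and $m-s\ge1$. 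Choosing $\rho_H$ small enough, depending only on $H$, gives $\Gamma\le\mu$.

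Lemma~\ref{lem:janson} then yields
\[
\mathbb P\bigl(\mathbb G_{a,a}(p)\text{ is }H\text{-free}\bigr)\le\exp(-\mu/2)\le\exp\bigl(-\tfrac{c_2}{2}a^{m+1}p^m\bigr),
\]
which is the claimed bound with $C=c_2/2$.

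The main point needing care is the vertex count for overlapping pairs: one must check $v_J\le 2m+1-s$ in every overlap pattern. This holds because the common part of two trees is a forest, so sharing $s$ edges forces sharing at least $s+1$ vertices. The remaining steps are routine bookkeeping with constants depending only on $H$.
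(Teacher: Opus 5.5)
Your proof is correct and follows essentially the same route as the paper. Both apply Janson's inequality to the copies of $H$ in $K_{a,a}$, obtain $\mu\ge c\,a^{m+1}p^m$ by counting embeddings, and use the forest-intersection observation (sharing $s$ edges forces sharing at least $s+1$ vertices) to get $\Gamma\le C\mu\sum_{s=1}^{m-1}(ap)^{m-s}$, which is at most $\mu$ once $\rho_H$ is small. The differences are cosmetic: the paper only uses copies with $X_H$ mapped into a fixed class, and it bounds the number of overlapping pairs explicitly by $\binom{h}{s}^2s!\,a^{2h-s}$ rather than by summing over a bounded number of overlap patterns.
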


\begin{proof}
Put $h=m+1$ and $a_H=2h$, and fix a bipartition
$V(H)=X_H\cup Y_H$ with $X_H\cap Y_H=\emptyset$.
Let $A_0,B_0$ be disjoint sets of size $a$, and let $K$ be the
complete bipartite graph with these vertex classes.
Write $\mathcal T$ for the set of distinct subgraphs of $K$ that are
images of injective maps $\phi:V(H)\to A_0\cup B_0$ satisfying
$\phi(X_H)\subseteq A_0$ and $\phi(Y_H)\subseteq B_0$.
Each subgraph belongs to $\mathcal T$ only once, regardless of the
number of maps producing it.
Let $X\sim\mathbb G_{a,a}(p)$ on these classes, and put
$Z=|\{J\in\mathcal T:E(J)\subseteq E(X)\}|$.

If $p=0$, the asserted probability bound is immediate, so assume $p>0$.
There are at most $a^h$ such injective maps.
Choosing their images successively gives at least
$(a-h+1)^h\ge(\frac{a}{2})^h$ maps, since $a\ge2h$.
Each member of $\mathcal T$ is produced by at most $h!$ maps.
Thus, with $b_H=\frac{1}{2^h h!}$,
\[
b_Ha^hp^m\le\mu:=\mathbb EZ=|\mathcal T|p^m\le a^hp^m.
\]

For the events $\{E(J)\subseteq E(X)\}$, indexed by $J\in\mathcal T$,
the sum $\Gamma$ in \textbf{Lemma~\ref{lem:janson}} is
\[
\Gamma=
\sum_{\substack{J_1,J_2\in\mathcal T,\ J_1\ne J_2\\
E(J_1)\cap E(J_2)\ne\emptyset}}
p^{|E(J_1)\cup E(J_2)|}.
\]
Fix such a pair, and put
$\ell=|E(J_1)\cap E(J_2)|$ and
$s=|V(J_1)\cap V(J_2)|$.
The graph with vertex set $V(J_1)\cap V(J_2)$ and edge set
$E(J_1)\cap E(J_2)$ is a subgraph of the tree $J_1$, so it is
a forest. Consequently $s\ge\ell+1$.
Also $1\le\ell\le m-1$: equality $\ell=m$ would force the two
edge sets, and hence the two copies of $H$, to coincide.

We bound the number $N_{\ell,s}$ of ordered pairs with these two
intersection sizes by counting the injective maps producing them.
Let $\phi_1,\phi_2$ produce $J_1,J_2$, respectively, and define
$S_i=\phi_i^{-1}(V(J_1)\cap V(J_2))\subseteq V(H)$ for $i=1,2$.
Both sets have size $s$, and the rule
$\tau(x)=\phi_2^{-1}(\phi_1(x))$ defines a bijection
$\tau:S_1\to S_2$.
It preserves membership in $X_H$ and $Y_H$.
There are at most $\binom{h}{s}^2s!$ choices for $S_1,S_2,\tau$.
For fixed choices, there are at most $a^h$ choices for $\phi_1$.
The identities $\phi_2(\tau(x))=\phi_1(x)$ for $x\in S_1$
then determine the images under $\phi_2$ of all vertices of $S_2$.
Each of its remaining $h-s$ vertices has at most $a$ choices in
its prescribed class. Requiring injectivity, exactly $s$ common
vertices, and exactly $\ell$ common edges can only reduce this number.
Since every pair $J_1,J_2$ is produced by at least one pair of maps,
$N_{\ell,s}\le\binom{h}{s}^2s!\,a^{2h-s}$.

Put $K_H=\sum_{s=0}^{h}\binom{h}{s}^2s!$ and $C'=\frac{K_H}{b_H}$.
Since $|E(J_1)\cup E(J_2)|=2m-\ell$ and $s\ge\ell+1$,
\[
\begin{aligned}
\Gamma
=\sum_{\ell=1}^{m-1}\sum_{s=\ell+1}^{h}
N_{\ell,s}p^{2m-\ell}
\le K_H\sum_{\ell=1}^{m-1}
a^{2m+1-\ell}p^{2m-\ell}
\le C'\mu\sum_{\ell=1}^{m-1}(ap)^{m-\ell}.
\end{aligned}
\]
Take
$\rho_H=\min\{1,\frac{1}{C'(m-1)}\}$.
If $ap\le\rho_H$, then
$C'\sum_{\ell=1}^{m-1}(ap)^{m-\ell}
\le C'(m-1)ap\le1$, and hence $\Gamma\le\mu$.
By \textbf{Lemma~\ref{lem:janson}},
$\mathbb P(Z=0)\le\exp(-\frac{\mu}{2})
\le\exp(-\frac{b_H}{2}a^{m+1}p^m)$.
Since an $H$-free graph contains none of the copies indexed by
$\mathcal T$, it follows, with $C=\frac{b_H}{2}$, that
\[
\mathbb P(\mathbb G_{a,a}(p)\text{ is }H\text{-free})
\le\mathbb P(Z=0)
\le\exp(-Ca^{m+1}p^m).
\]
\end{proof}

We now complete the proof.
Let $a_H,\rho_H,C'$ be the constants in
\textbf{Lemma~\ref{lem:random-tree-free}}, and put
$\kappa_H=\frac{C'}{4\cdot16^{m+1}}$.
First fix a constant
$0<\eta_H\le\min\{1,(\frac{\kappa_H}{4})^{\frac{1}{m-1}}\}$.
For each integer $d\ge3$, define
$q(d)=\lfloor\eta_H(\frac{d^m}{\log d})^{\frac{1}{m-1}}\rfloor$.
Since $q(d)\to\infty$ and $\frac{q(d)}{d}\to\infty$ as $d\to\infty$,
there is an integer $d_0\ge3$ such that, for every integer $d\ge d_0$,
\[
q(d)\ge2,\qquad
\frac{d}{8q(d)}\le\rho_H,\qquad
10e^{-\frac{d}{6}}\le\frac{1}{4}.
\]
Fix any such $d$ and write $q=q(d)$.
We will choose $n$ after fixing $d$ and $q$.
More precisely, put
$B_g(d)=\sum_{\ell=3}^{g-1}\frac{d^\ell}{2\ell}$,
where the sum is zero if $g=3$, and choose an integer $n$ satisfying
\[
n\ge
\max\{d+1,\ 16q,\ 8q(a_H+1),\ 40B_g(d)+1\}.
\]
Put $p=\frac{d}{n}$, let $G\sim\mathbb G(n,p)$, and set
$a=\lfloor\frac{n}{8q}\rfloor$.
The choices of $d,n$ give
$a\ge a_H$, $a\ge\frac{n}{16q}$, and
$ap\le\frac{d}{8q}\le\rho_H$.
Thus \textbf{Lemma~\ref{lem:random-tree-free}} applies with these $a,p$.

Fix $U\subseteq[n]$ with $|U|\ge\frac{n}{2}$ and a map
$\sigma:U\to[q]$.
Split each color class into sets of size $a$ and a remainder of
size less than $a$.
Fewer than $qa\le\frac{n}{8}$ vertices are discarded, leaving at least
$3q$ full sets. Choose $q$ of these sets and denote them by
$V_1,\ldots,V_q$.
Use the natural order on $[n]$ to make all these choices depend
only on $U,\sigma$, independently of the random edges of $G$.

Let $\mathcal E_\sigma$ be the event that $\sigma$ is a proper
$(2,H)$-avoiding coloring of $G[U]$.
For $i<j$, let $G_{ij}$ be the bipartite graph with vertex classes
$V_i,V_j$ and all edges of $G$ between them.
On $\mathcal E_\sigma$, if $V_i$ and $V_j$ have the same color,
then $G_{ij}$ is edgeless by properness.
If they have different colors, a copy of $H$ in $G_{ij}$ would
be bichromatic. Thus $G_{ij}$ is $H$-free in either case.
Each $G_{ij}$ has distribution $\mathbb G_{a,a}(p)$, and these
random graphs use disjoint sets of possible edges for distinct
unordered pairs $\{i,j\}$.
Their being $H$-free are therefore mutually independent events.
By \textbf{Lemma~\ref{lem:random-tree-free}}, using
$\binom{q}{2}\ge\frac{q^2}{4}$ and $a\ge\frac{n}{16q}$, we obtain
\[
\begin{aligned}
\mathbb P(\mathcal E_\sigma)
\le
\exp\left(-C'\binom{q}{2} a^{m+1}p^m\right)
\le
\exp\left(-\kappa_H n\frac{d^m}{q^{m-1}}\right).
\end{aligned}
\]
There are at most $(q+1)^n$ pairs $U,\sigma$, since each vertex
is either outside $U$ or assigned one of the $q$ colors.
As $m\ge2$, $\eta_H\le1$, and $d\ge3$, we have $q\le d^2$ and
$\log(q+1)\le3\log d$.
Also $\frac{d^m}{q^{m-1}}\ge\eta_H^{-(m-1)}\log d$ and
$\kappa_H\eta_H^{-(m-1)}\ge4$.
Consequently
\[
\begin{aligned}
\mathbb P\bigl(\exists U\subseteq[n]:
|U|\ge\frac{n}{2},\ \chi_{2,H}(G[U])\le q\bigr)
\le
\exp\left(n\log(q+1)
-\kappa_H n\frac{d^m}{q^{m-1}}\right)
\le d^{-n}<\frac{1}{4}.
\end{aligned}
\]

Let $Y$ count the vertices of $G$ of degree at least $10d$.
For every vertex $v$, its degree is binomial with mean
$\mu_v=(n-1)p\ge\frac{d}{2}$, and $10d\ge2\mu_v$.
By \textbf{Lemma~\ref{lem:chernoff}},
$\mathbb P(d_G(v)\ge10d)
\le\exp(-\frac{\mu_v}{3})\le e^{-\frac{d}{6}}$.
Hence $\mathbb EY\le ne^{-\frac{d}{6}}$, and Markov's inequality gives
$\mathbb P(Y\ge\frac{n}{10})
\le10e^{-\frac{d}{6}}\le\frac{1}{4}$.

Let $W$ count the cycles of $G$ of lengths less than $g$.
For each $\ell\ge3$, there are at most $\frac{n^\ell}{2\ell}$
possible cycles of length $\ell$, and each occurs with probability
$p^\ell$.
Therefore $\mathbb EW\le B_g(d)$ and
$\mathbb P(W\ge\frac{n}{10})
\le\frac{10B_g(d)}{n}<\frac{1}{4}$.
The sum of the three failure probabilities is less than one.
Thus there is a realization of $G$ such that
$\chi_{2,H}(G[U])>q$ for every $U\subseteq[n]$ with
$|U|\ge\frac{n}{2}$, while $Y<\frac{n}{10}$ and $W<\frac{n}{10}$.

Delete all vertices of degree at least $10d$.
Then, while a cycle of length less than $g$ remains, delete one
of its vertices.
Vertex deletion creates no new cycle, so the second stage deletes
at most $W$ vertices.
The remaining induced subgraph $G'$ has more than $\frac{4n}{5}$
vertices, maximum degree less than $10d$, and girth at least $g$.
It follows that $\chi_{2,H}(G')>q$.

Set $\Delta_0=10d_0$, fix any integer $\Delta\ge\Delta_0$, and apply
the construction of $G'$ with $d=\lfloor\frac{\Delta}{10}\rfloor$.
Then $d\ge d_0$, $d\ge\frac{\Delta}{20}$, and $10d\le\Delta$.
Let $G^*$ be the disjoint union of $G'$ and $K_{1,\Delta}$.
Since $\Delta(G')<10d\le\Delta$, we have $\Delta(G^*)=\Delta$.
Every cycle of $G^*$ lies in $G'$, so $g(G^*)\ge g$.
Restricting a $(2,H)$-avoiding coloring of $G^*$ to $V(G')$ gives
a $(2,H)$-avoiding coloring of $G'$, and hence
$\chi_{2,H}(G^*)\ge\chi_{2,H}(G')$.
Since $q\ge2$, the definition of $q$ gives
$q\ge\frac{\eta_H}{2}(\frac{d^m}{\log d})^{\frac{1}{m-1}}$.
Using $d\ge\frac{\Delta}{20}$ and $\log d\le\log\Delta$, we obtain
\[
\chi_{2,H}(G^*)
\ge\chi_{2,H}(G')
>q
\ge
\frac{\eta_H}{2\cdot20^{\frac{m}{m-1}}}
\left(\frac{\Delta^m}{\log\Delta}\right)^{\frac{1}{m-1}}.
\]
This proves the theorem with
$C=\frac{\eta_H}{2\cdot20^{\frac{m}{m-1}}}$.

\subsection{Proof of Theorem~\ref{thm:B-lower}}\label{ss:lower-B}

Let $\alpha_B(G)$ be the maximum size of a matching $M\subseteq E(G)$ such
that no two edges of $M$ are opposite edges of a $4$-cycle. Every color
class of a B-coloring has this property. If $E(G)\ne\emptyset$, then
$q_B(G)\ge \frac{|E(G)|}{\alpha_B(G)}$.
\begin{lemma}\label{lem:random-alphaB}
There exists an absolute constant $C>0$ such that the following holds.
Let $p=p(n)\in(0,1)$ satisfy $p^{-2}\log n=o(n)$, and let
$G\sim\mathbb G_{n,n}(p)$. Then, with probability tending to one,
$\alpha_B(G)\le Cp^{-2}\log n$.
\end{lemma}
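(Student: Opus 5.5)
We prove the lemma by a first-moment union bound over candidate matchings. Put $k=\lceil Cp^{-2}\log n\rceil$ with $C$ a large absolute constant; since $p^{-2}\log n=o(n)$ we have $k\le n$ for large $n$. It suffices to show that, with probability $1-o(1)$, no matching of size $k$ in the complete bipartite graph $K_{n,n}$ on the two classes $V_1,V_2$ lies in $G$ and avoids having two edges opposite in a $4$-cycle of $G$. Any such $k$-matching is $M=\{x_iy_i:i\in[k]\}$ with distinct $x_i\in V_1$ and distinct $y_i\in V_2$. There are at most $n^{2k}$ of them. The requirement that $M\subseteq E(G)$ costs a factor $p^k$.

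Fix such an $M$. For $i<j$, the edges $x_iy_i$ and $x_jy_j$ are opposite edges of a $4$-cycle exactly when $x_iy_j,x_jy_i\in E(G)$ (the cycle is $x_iy_ix_jy_jx_i$). These cross pairs are disjoint from $M$ and from one another for different $\{i,j\}$. The condition on $M$ is therefore that none of the $\binom k2$ events ``both $x_iy_j$ and $x_jy_i$ are edges'' occurs. These events are independent of each other and of the event $M\subseteq E(G)$, and each has probability $p^2$. Hence
\[
\mathbb P(M\text{ is a bad witness})\le p^k(1-p^2)^{\binom k2}\le p^k\exp\Bigl(-p^2\tbinom k2\Bigr).
\]

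Summing over all $M$, the expected number of witnesses is at most
\[
\exp\Bigl(k\bigl(2\log n+\log p-\tfrac{p^2(k-1)}2\bigr)\Bigr).
\]
Since $p<1$, we have $\log p<0$. With $k-1\ge Cp^{-2}\log n-1$, the exponent is at most $k(2\log n-\tfrac C2\log n+\tfrac{p^2}{2})\le k(3-\tfrac C2)\log n$, which tends to $-\infty$ once $C\ge8$. Markov's inequality then shows that, with probability tending to one, no witness exists, and so $\alpha_B(G)<k$.

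There is no real obstacle here; unlike the Janson estimates used in Lemma~\ref{lem:random-tree-free}, the key structural fact is that the $4$-cycle conditions decouple into independent disjoint edge pairs. The hypothesis $p^{-2}\log n=o(n)$ is used only to guarantee that $k\le n$, so that the event is meaningful, and that $k\to\infty$. The case of bounded $p^{-2}\log n$ is harmless, because a bound of order $Cp^{-2}\log n$ trivially absorbs any constant sized matchings.
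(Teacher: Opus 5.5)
Your proof is correct and is essentially the paper's argument: a first-moment bound over $k$-matchings of $K_{n,n}$ with $k=\lceil Cp^{-2}\log n\rceil$, where the cross pairs $\{x_iy_j,x_jy_i\}$ are disjoint from $M$ and from each other, giving exactly $p^k(1-p^2)^{\binom k2}$ (the paper drops the factor $p^k$ and computes the exponent exactly with $C=8$). One cosmetic slip: since $p<1$ gives $p^{-2}\log n\ge\log n\to\infty$, the ``bounded'' case in your last paragraph never occurs and $k\to\infty$ does not come from the hypothesis, but the argument does not depend on either remark.
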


\begin{proof}
Let $X,Y$ be the two fixed vertex classes of size $n$, and let $K$
be the complete bipartite graph with classes $X,Y$.
For an integer $1\le a\le n$, let $\mathcal M_a$ be the set of matchings
of size $a$ in $K$. Choosing the vertices of a matching in each class
and a bijection between the two chosen sets gives
$|\mathcal M_a|=\binom{n}{a}^2a!$.

For $M=\{x_iy_i:1\le i\le a\}\in\mathcal M_a$, where $x_i\in X$
and $y_i\in Y$, let $I_M$ be the indicator of the event that
$M\subseteq E(G)$ and
$\{x_iy_j,x_jy_i\}\nsubseteq E(G)$ for every $1\le i<j\le a$.
This event holds exactly when $M$ is a matching in $G$ with no two
edges opposite in a $4$-cycle of $G$.
The conditions involve the pairwise disjoint sets of possible edges
$M$ and $\{x_iy_j,x_jy_i\}$, $1\le i<j\le a$.
Independence of the edge choices therefore gives
$\mathbb E I_M=p^a(1-p^2)^{\binom{a}{2}}$.
Put $Z_a=\sum_{M\in\mathcal M_a}I_M$. Then
\[
\begin{aligned}
\mathbb E Z_a
=\binom{n}{a}^2a!\,p^a(1-p^2)^{\binom{a}{2}}
\le n^{2a}\exp\left(-p^2\binom{a}{2}\right).
\end{aligned}
\]
Moreover, $\alpha_B(G)\ge a$ if and only if $Z_a\ge1$.
Since $Z_a$ takes nonnegative integer values,
\[
\begin{aligned}
\mathbb P(\alpha_B(G)\ge a)
=\sum_{r\ge1}\mathbb P(Z_a=r)
\le\sum_{r\ge1}r\,\mathbb P(Z_a=r)
=\mathbb E Z_a
\le\exp\left(2a\log n-p^2\binom{a}{2}\right).
\end{aligned}
\]

Now take $a=\lceil8p^{-2}\log n\rceil$.
The assumption $p^{-2}\log n=o(n)$ ensures $a\le n$ for all
sufficiently large $n$.
Write $a=8p^{-2}\log n+\delta$, where $0\le\delta<1$.
Substituting this expression gives
\[
\begin{aligned}
2a\log n-p^2\binom{a}{2}
&=-16p^{-2}(\log n)^2+(4-6\delta)\log n
+\frac{p^2}{2}\delta(1-\delta)\\
&\le-16p^{-2}(\log n)^2+4\log n+\frac{1}{8}
\longrightarrow-\infty,
\end{aligned}
\]
where we used $p<1$ and $\delta(1-\delta)\le\frac{1}{4}$.
Hence $\mathbb P(\alpha_B(G)\ge a)\to0$.
With probability tending to one,
$\alpha_B(G)\le a-1<8p^{-2}\log n$, proving the lemma with $C=8$.
\end{proof}

We now complete the proof.
Let $C'$ be the constant in \textbf{Lemma~\ref{lem:random-alphaB}}.
Put $s=k+1$ and
$\rho_0=\frac{s+t-1}{st}$. The hypothesis
$(k-1)(t-2)>2$ is equivalent to $(s-2)(t-2)>2$, and hence to
$\rho_0<\frac{1}{2}$. Choose a constant $\zeta=\zeta(s,t)\in(0,1)$ so small that
$16\zeta^{st}\le\frac{1}{4}$, put
$p=\zeta n^{-\rho_0}$, and let $G\sim\mathbb G_{n,n}(p)$.

Since $np=\zeta n^{1-\rho_0}\gg\log n$,
\textbf{Lemma~\ref{lem:chernoff}} and a union bound show that, with
probability tending to one,
$\Delta(G)\le2np$ and $|E(G)|\ge\frac{n^2p}{2}$.
Moreover, $\rho_0<\frac{1}{2}$ implies
$p^{-2}\log n=o(n)$, so \textbf{Lemma~\ref{lem:random-alphaB}} gives
$\alpha_B(G)\le C'p^{-2}\log n$ with probability tending to one.

Let $X$ denote the number of copies of $K_{s,t}$ in $G$. Since
$s+t-st\rho_0=1$, we have
\[
\mathbb EX
\le
2n^{s+t}p^{st}
=
2\zeta^{st}n.
\]
By Markov's inequality,
$\mathbb P(X>8\zeta^{st}n)\le\frac{1}{4}$.
Consequently, for all sufficiently large $n$, there is a realization of
$G$ for which simultaneously
\[
\Delta(G)\le2np,
\qquad
|E(G)|\ge\frac{n^2p}{2},
\qquad
\alpha_B(G)\le C'p^{-2}\log n,
\qquad
X\le8\zeta^{st}n.
\]
Fix such a realization.

Choose one vertex from each copy of $K_{s,t}$ in $G$, let $S$ be the set of
chosen vertices, and put $H=G-S$. Then $H$ is $K_{s,t}$-free
and is induced in $G$, with $|S|\le8\zeta^{st}n$.
Since deleting a vertex removes at most
$\Delta(G)$ edges,
\[
|E(H)|
\ge
\frac{n^2p}{2}-|S|\Delta(G)
\ge
\left(\frac{1}{2}-16\zeta^{st}\right)n^2p
\ge
\frac{n^2p}{4}.
\]

Since $H$ is induced in $G$, the condition defining $\alpha_B$ is
the same in $H$ and $G$ for every matching contained in $E(H)$.
Thus $\alpha_B(H)\le\alpha_B(G)\le C'p^{-2}\log n$, and hence
\[
q_B(H)
\ge
\frac{|E(H)|}{\alpha_B(H)}
\ge
\frac{1}{4C'}\frac{n^2p^3}{\log n}.
\]

Set $\Delta_n=\lceil2np\rceil$. Then
$\Delta(H)\le\Delta_n$ and $\Delta_n\to\infty$. By the definition of
$\rho_0$,
\[
\frac{2-3\rho_0}{1-\rho_0}
=
\frac{2st-3s-3t+3}{st-s-t+1}
=
2-\frac{1}{k}-\frac{k+1}{k(t-1)}
=
\gamma_{k,t}.
\]
Hence
\[
n^2p^3
=
\zeta^3n^{2-3\rho_0}
=
\zeta^{3-\gamma_{k,t}}(np)^{\gamma_{k,t}}.
\]
Since $np=\zeta n^{1-\rho_0}\to\infty$, we have
$\Delta_n\le2np+1\le3np$ for all sufficiently large $n$.
Also, since $\zeta$ is fixed and $1-\rho_0>0$, for all sufficiently
large $n$,
\[
\log\Delta_n
\ge\log(2np)
=(1-\rho_0)\log n+\log(2\zeta)
\ge\frac{1-\rho_0}{2}\log n.
\]
Thus
\begin{equation}\label{eq:B-random-lower}
q_B(H)
\ge
\frac{\zeta^{3-\gamma_{k,t}}(1-\rho_0)}
{8C'\,3^{\gamma_{k,t}}}
\frac{\Delta_n^{\gamma_{k,t}}}{\log\Delta_n}.
\end{equation}
Let $C=C(k,t)>0$ denote the coefficient in \eqref{eq:B-random-lower}.
For each sufficiently large integer $n$, the construction and
\eqref{eq:B-random-lower} give a $K_{s,t}$-free graph $H_n$ with
$\Delta(H_n)\le\Delta_n$ and
$q_B(H_n)\ge C\frac{\Delta_n^{\gamma_{k,t}}}{\log\Delta_n}$.
Since $\Delta_n=\lceil2\zeta n^{1-\rho_0}\rceil$ and $0<\rho_0<1$, we have
\[
0<2\zeta\bigl((n+1)^{1-\rho_0}-n^{1-\rho_0}\bigr)
\le2\zeta(1-\rho_0)n^{-\rho_0}\longrightarrow0.
\]
Choose an integer $n_0$ such that $H_n$ exists and
$\Delta_{n+1}-\Delta_n\in\{0,1\}$ for every $n\ge n_0$, and set
$\Delta_0=\Delta_{n_0}$.
The integer sequence $(\Delta_n)_{n\ge n_0}$ is nondecreasing and
unbounded, and increases by at most one at each step. Therefore, for
every integer $\Delta\ge\Delta_0$, there exists $n\ge n_0$ with
$\Delta_n=\Delta$.
Fix such an $n$ and let $G^*$ be the disjoint union of $H_n$ and
$K_{1,\Delta}$. Every connected subgraph of $G^*$ lies entirely in
$H_n$ or entirely in $K_{1,\Delta}$. The graph $H_n$ is $K_{s,t}$-free, and the star
$K_{1,\Delta}$ contains no copy of $K_{s,t}$, since $s,t\ge2$.
Thus $G^*$ is $K_{s,t}$-free, and $\Delta(G^*)=\Delta$.
Restricting a B-coloring of $G^*$ to $E(H_n)$ gives a B-coloring of
$H_n$. Consequently, \eqref{eq:B-random-lower} gives
$q_B(G^*)\ge q_B(H_n)\ge C\frac{\Delta^{\gamma_{k,t}}}{\log\Delta}$.
This proves the theorem.

\section{Further Work}\label{s:further}

Let $\calF$ be a fixed nonempty family of connected bipartite graphs,
each with at least two edges, and let $k\ge0$ be an integer. Put
$
\mathcal G_k=
\{F: F\text{ is connected and bipartite},\,
|E(F)|\ge1,\, k(F)\le k\}.
$
For $p\in\{\chi_{2,\calF},q_B\}$ and a function
$f:\mathbb N\to(0,\infty)$, define
\begin{itemize}
    \item $p(\Delta,k)=O(f(\Delta))
    \iff
    \forall F\in\mathcal G_k,\quad
    \limsup\limits_{\Delta\to\infty}
    \frac{p(\Delta,F)}{f(\Delta)}<\infty$;
    \item $p(\Delta,k)=\Omega(f(\Delta))
    \iff
    \exists F\in\mathcal G_k,\quad
    \liminf\limits_{\Delta\to\infty}
    \frac{p(\Delta,F)}{f(\Delta)}>0$.
\end{itemize}
We write $p(\Delta,k)=\Theta(f(\Delta))$ when both relations hold.
We use $\chi_{2,H}(\Delta,k)$ when $\calF=\{H\}$.

Our results give improved bounds for $\chi_{2,\calF}(\Delta,F)$ and
$q_B(\Delta,F)$ in terms of the structural parameter $k(F)$
introduced in this paper.
In the following two subsections, we discuss the two coloring problems
separately and formulate conjectures on the asymptotic bounds for
$\chi_{2,\calF}(\Delta,k)$ and $q_B(\Delta,k)$ in different ranges of $k$.

\subsection{$(2,\calF)$-avoiding colorings}

Chuet's \textbf{Problem~\ref{prob:A}} asks which bipartite exclusions
force an improvement over the general upper bound for frugal coloring.
In \textbf{Theorem~\ref{thm:main}}, we prove that
$\chi_{2,\calF}(\Delta,k)=
O((\frac{\Delta^m}{\log\Delta})^{\frac{1}{m-1}})$
for $0\le k\le m-2$, where $m\ge2$ is the minimum number of edges
in a member of $\calF$.
For frugal coloring, taking $\calF=\{K_{1,\beta+1}\}$ gives
$m=\beta+1$, and hence a positive answer to
\textbf{Problem~\ref{prob:A}} whenever $k(F)\le\beta-1$.
Moreover, \textbf{Corollary~\ref{cor:girth}} answers
\textbf{Problem~\ref{prob:C}} affirmatively, while
\textbf{Theorem~\ref{thm:lower-avoiding}} shows that the bound in
\textbf{Corollary~\ref{cor:girth}} has the correct order for fixed tree
obstructions under any
fixed prescribed girth. A natural next question is whether the threshold $m-2$ is sharp for
$\chi_{2,\calF}(\Delta,k)$. Note that the answer is affirmative for
$m=2$: in this case $\chi_{2,\calF}(G)=\chi(G^2)$, and since $k(C_4)=1$, the point--line incidence graphs of projective
planes, which are $C_4$-free, give
$\chi_{2,\calF}(\Delta,1)=\Omega(\Delta^2)$~\cite{HindMolloyReed1997}.
We believe that the answer remains affirmative for $m\ge3$ and
propose the following conjecture.

\begin{conjecture}\label{conj:avoiding-threshold}
Let $\mathcal H$ be a fixed nonempty family of connected bipartite graphs,
let $m\ge2$ be the minimum number of edges in a member of $\mathcal H$,
and let $k\ge0$ be an integer.
As $\Delta\to\infty$,
\[
\chi_{2,\mathcal H}(\Delta,k)=
\begin{cases}
O\!\left((\frac{\Delta^m}{\log\Delta})^{\frac{1}{m-1}}\right),
& \text{if }k\le m-2,\\[4pt]
\Theta\!\left(\Delta^{\frac{m}{m-1}}\right),
& \text{if }k\ge m-1.
\end{cases}
\]
\end{conjecture}

Note that an affirmative answer to \textbf{Conjecture~\ref{conj:avoiding-threshold}} would also answer
\textbf{Problem~\ref{prob:B}}.

\subsection{B-colorings}

\textbf{Theorem~\ref{thm:B-k}} gives
$q_B(\Delta,k)=O(\Delta)$ for $k\le1$ and
$q_B(\Delta,k)=O(\frac{\Delta^{2-\frac{1}{k}}}{\log\Delta})$
for $k\ge2$.
When $k\le1$, the linear order is best possible. Indeed, every B-coloring
is a proper edge-coloring, so $q_B(\Delta,k)\ge\Delta$.
When $k\ge2$, the lower bounds in
\textbf{Corollary~\ref{cor:B-k-sharp}} show that the upper bound in
\textbf{Theorem~\ref{thm:B-k}} is nearly sharp: for every $\varepsilon>0$,
$q_B(\Delta,k)=
\Omega(\frac{\Delta^{2-\frac{1}{k}-\varepsilon}}{\log\Delta})$
as $\Delta\to\infty$.
We believe that the upper bound in \textbf{Theorem~\ref{thm:B-k}}
gives the correct asymptotic order and propose the following conjecture.

\begin{conjecture}\label{conj:B-order}
Let $k\ge0$ be an integer. As $\Delta\to\infty$,
\[
q_B(\Delta,k)=
\begin{cases}
\Theta(\Delta), & \text{if }k\le1,\\[4pt]
\Theta\!\left(\frac{\Delta^{2-\frac{1}{k}}}{\log\Delta}\right),
& \text{if }k\ge2.
\end{cases}
\]
\end{conjecture}

Another direction is to determine which fixed bipartite graphs $F$
satisfy $q_B(\Delta,F)=O(\Delta)$ as $\Delta\to\infty$.
Linear bounds for $q_B(G)$ are also known under other structural
restrictions. Write $\Delta=\Delta(G)$.
Gy\'arf\'as, Martin, Ruszink\'o, and
S\'ark\"ozy~\cite{GyarfasMartinRuszinkoSarkozy2024} proved
$q_B(G)\le2\Delta+8$ for planar graphs, $q_B(G)\le2\Delta$ for
bipartite planar graphs, and $q_B(G)\le\Delta+1$ for outerplanar
graphs with $\Delta\ge4$.
For planar graphs, Kong, Wang, and Zheng~\cite{KongWangZheng2026}
proved $q_B(G)\le2\Delta+6$, and the stronger bounds
$q_B(G)\le2\Delta+4$ when $\Delta\ge12$ and
$q_B(G)\le2\Delta$ when $\Delta\ge38$.
They also proved $q_B(G)=\Delta$ for outerplanar graphs with
$\Delta\ge7$. Chen, Wang, and Xu~\cite{ChenWangXu2026} obtained
$q_B(G)\le2\Delta$ for maximal planar graphs of diameter two with
$\Delta\ge5$. Vuolo~\cite{Vuolo2026} proved that every planar graph
satisfies $q_B(G)\le\Delta+\max\{\Delta_2(G),38\}$, where
$\Delta_2(G)=\max\limits_{u,v\in V(G),\,u\ne v}|N(u)\cap N(v)|$
is the maximum codegree.

More generally, Hu, Kong, and Wang~\cite{HuKongWang2026} proved that
every $d$-degenerate graph with $1\le d\le\Delta$ satisfies
$q_B(G)\le\Delta+(d-1)\Delta_2(G)\le d\Delta$, with equality
$q_B(K_{d,\Delta})=d\Delta$. For $K_{2,t}$-free graphs, this gives
$q_B(G)\le\Delta+(d-1)\min\{t-1,\Delta\}$, which also proved by
Jiang~\cite{Jiang2026}.
For $K_{2,t}$-free planar graphs, Jiang further proved
$q_B(G)\le\Delta+t-1$ whenever $t\ge35$ or $\Delta>428$.
He also showed that $q_B(G)=\Delta$ if $t=2$ and $\Delta\ge7$,
or if $t\ge3$ and $\Delta\ge14(t-1)$.

These results motivate the following problem for bipartite
exclusions.

\begin{problem}\label{prob:linear-classification}
Which fixed bipartite graphs $F$ with at least one edge satisfy
$q_B(\Delta,F)=O(\Delta)$ as $\Delta\to\infty$?
\end{problem}

\noindent\textbf{Acknowledgement}

This research was supported in part by the National Natural Science Foundation of China (Nos. 12471330 and 12571373) and the Shandong University Youth Student Basic Research Project.

\noindent\textbf{Declaration on the Use of Generative AI}

All key mathematical ideas, problem formulations, proof strategies, arguments, and results presented in this paper were developed by the authors. Generative AI tools were used solely to assist with the exploration of some proof strategies and calculations and with auxiliary tasks, including checking typographical and linguistic errors, improving the organization and clarity of the manuscript, and drawing attention to possible inconsistencies or gaps that were subsequently examined by the authors. The authors independently verified all statements and proofs and take full responsibility for the content of this paper.

\bibliography{cas-refs}

\end{document}